\newtheorem{theorem}{Theorem}
\newtheorem{corollary}{Corollary}
\newtheorem{definition}{Definition}
\newtheorem{lemma}{Lemma}
\newtheorem{proposition}{Proposition}
\newtheorem{remark}{Remark}
\newenvironment{proof}[1][Proof]{\textbf{#1.} }{\ \rule{0.5em}{0.5em}}
\begin{document}

\title {Compactness property of the linearized Boltzmann collision
operator for a mixture of monatomic and polyatomic species }

\author{Niclas Bernhoff \footnote{niclas.bernhoff@kau.se}\\Department of Mathematics and Computer Science\\ Karlstad University,  65188 Karlstad, Sweden }

\maketitle

\abstract{The linearized Boltzmann collision operator has
a central role in many important applications of the Boltzmann equation.
Recently some important classical properties of the linearized collision
operator for monatomic single species were extended to multicomponent
monatomic gases and polyatomic single species. For multicomponent polyatomic
gases, the case where the polyatomicity is modelled by a discrete internal
energy variable was considered lately. Here we considers the corresponding
case for a continuous internal energy variable. Compactness results, saying
that the linearized operator can be decomposed into a sum of a positive
multiplication operator, the collision frequency, and a compact operator,
bringing e.g., self-adjointness, is extended from the classical result for
monatomic single species, under reasonable assumptions on the collision
kernel. With a probabilistic formulation of the collision operator as a
starting point, the compactness property is shown by a decomposition, such
that the terms are, or at least are uniform limits of, Hilbert-Schmidt
integral operators and therefore are compact operators. Moreover, bounds on
- including coercivity of - the collision frequency are obtained for a hard
sphere like, as well as hard potentials with cutoff like, models, from which
Fredholmness of the linearized collision operator follows, as well as its
domain.}

\textbf{Keywords:} Boltzmann equation, Gas mixture, Polyatomic gas, Linearized collision operator, Hilbert-Schmidt integral operator

\textbf{MSC Classification:} 82C40, 35Q20, 35Q70, 76P05, 47G10

\section{Introduction\label{S1}}

The Boltzmann equation is a fundamental equation of kinetic theory of gases,
e.g., for computations of the flow around a space shuttle in the upper
atmosphere during reentry \cite{BBBD-18}. Studies of the main properties of
the linearized collision operator are of great importance in gaining
increased knowledge about related problems, see, e.g., \cite{Cercignani-88}
and references therein. The linearized collision operator, obtained by
considering deviations of an equilibrium, or Maxwellian, distribution, can
in a natural way be written as a sum of a positive multiplication operator -
the collision frequency - and an integral operator $-K$. Compact properties
of the integral operator $K$ (for angular cut-off kernels) are extensively
studied for monatomic single species, see, e.g., \cite{Gr-63, Dr-75,
Cercignani-88, LS-10}, and more recently for monatomic multi-component
mixtures \cite{BGPS-13,Be-23a}. Extensions to polyatomic single species,
where the polyatomicity is modeled by either a discrete, or, a continuous
internal energy variable \cite{Be-23a,Be-23b} and polyatomic multicomponent
mixtures \cite{Be-23c}, where the polyatomicity is modeled by discrete
internal energy variables, have also been conducted. For models, assuming a
continuous internal energy variable, see also \cite{BBS-23} for the case of
molecules undergoing resonant collisions (for which internal energy and
kinetic energy, respectively, are conserved under collisions), and \cite%
{BST-22, BST-23} for diatomic and polyatomic gases, respectively - with more
restrictive assumptions on the collision kernels than in \cite{Be-23b}, but
also a more direct approach. The integral operator can be written as the sum
of a Hilbert-Schmidt integral operator and an approximately Hilbert-Schmidt
integral operator -\ which is a uniform limit of Hilbert-Schmidt integral
operators (cf. Lemma $\ref{LGD}$ in Section $\ref{PT1}$) \cite{Glassey}, and
so compactness of the integral operator $K$ can be obtained. In this work,
we extend the results of \cite{Be-23a,Be-23b} for monatomic multicomponent
mixtures and polyatomic single species, where the polyatomicity is modeled
by a continuous internal energy variable \cite{BDLP-94,GP-23}, to the case
of multicomponent mixtures of monatomic and/or polyatomic gases, where the
polyatomicity is modeled by a continuous internal energy variable \cite%
{DMS-05,BBBD-18}. To consider mixtures of monatomic and polyatomic molecules
are of highest relevance in, e.g., the upper atmosphere \cite{BBBD-18}.

Following the lines of \cite{Be-23a,Be-23b,Be-23c}, motivated by an approach
by Kogan in \cite[Sect. 2.8]{Kogan} for the monatomic single species case, a
probabilistic formulation of the collision operator is considered as the
starting point. With this approach, it is shown that the integral operator $%
K $ can be written as a sum of Hilbert-Schmidt integral operators and
operators, which are uniform limits of Hilbert-Schmidt integral operators -
and so compactness of the integral operator $K$ follows. The operator $K$ is
self-adjoint, as well as the collision frequency. Thus the linearized
collision operator as the sum of two self-adjoint operators whereof (at
least) one is bounded, is also self-adjoint.

For models corresponding to hard sphere models, as well as hard potentials
with cut off models, in the monatomic case, bounds on the collision
frequency are obtained. Here we also want to point out reference \cite{DL-23}%
, where the corresponding upper bound in \cite{Be-23b} for the single
species\ case was improved. Then the collision frequency is coercive and
becomes a Fredholm operator. The set of Fredholm operators is closed under
addition with compact operators. Therefore, also the linearized collision
operator becomes a Fredholm operator by the compactness of the integral
operator $K$. The Fredholm property is vital in the Chapman-Enskog process,
and the Fredholmness of the linearized operator supply with the Fredholm
property taken for granted in \cite{BBBD-18}, for those models. Note that
for monatomic species, the linearized operator is not Fredholm for soft
potential models, unlike for hard potential models. The domain of collision
frequency - and, hence, of the linearized collision operator as well -
follows directly by the obtained bounds

For hard sphere like models the linearized collision operator satisfies all
the properties of the general linear operator in the abstract half-space
problem considered in \cite{Be-23d}, and, hence, the existence results in 
\cite{Be-23d} apply.

The rest of the paper is organized as follows. In Section $\ref{S2}$, the
model considered is presented. The probabilistic formulation of the
collision operators considered and its relations to more classical
formulations \cite{DMS-05,BBBD-18} are accounted for in Section $\ref{S2.1}$%
.\ Some classical results for the collision operators in Section $\ref{S2.2}$
and the linearized collision operator in Section $\ref{S2.3}$ are reviewed.
Section $\ref{S3}$ is devoted to the main results of this paper, while the
main proofs are addressed in Sections $\ref{PT1}-\ref{PT2}$; a proof of
compactness of the integral operators $K$ is presented in Section $\ref{PT1}$%
, while a proof of the bounds on the collision frequency appears in Section $%
\ref{PT2}$.

\section{Model\label{S2}}

This section concerns the model considered. Probabilistic formulations of
the collision operators are considered, whose relations to more classical
formulations are accounted for. Known properties of the models and
corresponding linearized collision operators are also reviewed .

Consider a multicomponent mixture of $s$ species $a_{1},...,a_{s}$, with $%
s_{0}$ monatomic and $s_{1}:=s-s_{0}$ polyatomic species, and masses $%
m_{1},...,m_{s}$, respectively. Here the polyatomicity is modeled by a
continuous internal energy variable $I\in $ $\mathbb{R}_{+}$ \cite{BL-75}.

The distribution functions are of the form $f=\left( f_{1},...,f_{s}\right) $%
, where for $\alpha \in \left\{
1,...,s\right\}$ the component $f_{\alpha }=f_{\alpha }\left( t,\mathbf{x},\mathbf{Z}\right) $,
with 
\begin{equation}
\mathbf{Z=Z}_{\alpha }:=\left\{ 
\begin{array}{l}
\text{ \ \ }\boldsymbol{\xi }\text{ \ \ \ \ \ for }\alpha \in \left\{
1,...,s_{0}\right\} \text{ \ \ } \\ 
\left( \boldsymbol{\xi },I\right) \text{ \ \ for }\alpha \in \left\{
s_{0}+1,...,s\right\}%
\end{array}%
\right. \text{,}  \label{z1}
\end{equation}%
$\left\{ t,I\right\} \subset \mathbb{R}_{+}$, $\mathbf{x}=\left(
x,y,z\right) \in \mathbb{R}^{3}$, and $\boldsymbol{\xi }=\left( \xi _{x},\xi
_{y},\xi _{z}\right) \in \mathbb{R}^{3}$, is the distribution function for
species $a_{\alpha }$.

Moreover, consider the real Hilbert space 
\begin{equation*}
\mathcal{\mathfrak{h}}:=\left( L^{2}\left( d\boldsymbol{\xi \,}\right)
\right) ^{s_{0}}\times \left( L^{2}\left( d\boldsymbol{\xi \,}dI\right)
\right) ^{s_{1}},
\end{equation*}%
with inner product%
\begin{equation*}
\left( f,g\right) =\sum_{\alpha =1}^{s_{0}}\int_{\mathbb{R}^{3}}f_{\alpha
}g_{\alpha }\,d\boldsymbol{\xi \,}+\sum_{\alpha =s_{0}+1}^{s}\int_{\mathbb{R}%
^{3}\times \mathbb{R}_{+}}f_{\alpha }g_{\alpha }\,d\boldsymbol{\xi \,}dI%
\text{, }f,g\in \mathcal{\mathfrak{h}}\text{.}
\end{equation*}

The evolution of the distribution functions is (in the absence of external
forces) described by the (vector) Boltzmann equation%
\begin{equation}
\frac{\partial f}{\partial t}+\left( \boldsymbol{\xi }\cdot \nabla _{\mathbf{%
x}}\right) f=Q\left( f,f\right) \text{,}  \label{BE1}
\end{equation}%
where the (vector) collision operator $Q=\left( Q_{1},...,Q_{s}\right) $ is
a quadratic bilinear operator that accounts for the change of velocities and
internal energies of particles due to binary collisions (assuming that the
gas is rarefied, such that other collisions are negligible). Here the
component $Q_{\alpha }$ is the collision operator for species $a_{\alpha }$
for $\alpha \in \left\{ 1,...,s\right\} $.

A collision betweeng two particles of species $a_{\alpha }$ and $a_{\beta
} $, respectively, where $\left\{ \alpha ,\beta \right\} \subset \left\{ 1,...,s\right\} $, can be represented by two pre-collisional elements, each element
consisting of a microscopic velocity and possibly also an internal energy, $%
\mathbf{Z}$ and $\mathbf{Z}_{\ast }$, and two corresponding post-collisional
elements, $\mathbf{Z}^{\prime }$ and $\mathbf{Z}_{\ast }^{\prime }$. The
notation for pre- and post-collisional pairs may be interchanged as well.
Due to momentum and total energy conservation, the following relations have
to be satisfied by the elements%
\begin{equation}
m_{\alpha }\boldsymbol{\xi }+m_{\beta }\boldsymbol{\xi }_{\ast }=m_{\alpha }%
\boldsymbol{\xi }^{\prime }+m_{\beta }\boldsymbol{\xi }_{\ast }^{\prime }
\label{CIM}
\end{equation}%
and if $\left\{ \alpha ,\beta \right\} \subseteq \left\{ 1,...,s_{0}\right\} 
$%
\begin{equation}
m_{\alpha }\left\vert \boldsymbol{\xi }\right\vert ^{2}+m_{\beta }\left\vert 
\boldsymbol{\xi }_{\ast }\right\vert ^{2}=m_{\alpha }\left\vert \boldsymbol{%
\xi }^{\prime }\right\vert ^{2}+m_{\beta }\left\vert \boldsymbol{\xi }_{\ast
}^{\prime }\right\vert ^{2}\text{,}  \label{CIEa}
\end{equation}%
if $\alpha \in \left\{ 1,...,s_{0}\right\} $ and $\beta \in \left\{
s_{0}+1,...,s\right\} $%
\begin{equation}
m_{\alpha }\left\vert \boldsymbol{\xi }\right\vert ^{2}+m_{\beta }\left\vert 
\boldsymbol{\xi }_{\ast }\right\vert ^{2}+I_{\ast }=m_{\alpha }\left\vert 
\boldsymbol{\xi }^{\prime }\right\vert ^{2}+m_{\beta }\left\vert \boldsymbol{%
\xi }_{\ast }^{\prime }\right\vert ^{2}+I_{\ast }^{\prime }\text{,}
\label{CIEb}
\end{equation}%
if $\alpha \in \left\{ s_{0}+1,...,s\right\} $ and $\beta \in \left\{
1,...,s_{0}\right\} $%
\begin{equation}
m_{\alpha }\left\vert \boldsymbol{\xi }\right\vert ^{2}+m_{\beta }\left\vert 
\boldsymbol{\xi }_{\ast }\right\vert ^{2}+I=m_{\alpha }\left\vert 
\boldsymbol{\xi }^{\prime }\right\vert ^{2}+m_{\beta }\left\vert \boldsymbol{%
\xi }_{\ast }^{\prime }\right\vert ^{2}+I^{\prime }\text{,}  \label{CIEc}
\end{equation}%
while if $\left\{ \alpha ,\beta \right\} \subseteq \left\{
s_{0}+1,...,s\right\} $ 
\begin{equation}
m_{\alpha }\left\vert \boldsymbol{\xi }\right\vert ^{2}+m_{\beta }\left\vert 
\boldsymbol{\xi }_{\ast }\right\vert ^{2}+I+I_{\ast }=m_{\alpha }\left\vert 
\boldsymbol{\xi }^{\prime }\right\vert ^{2}+m_{\beta }\left\vert \boldsymbol{%
\xi }_{\ast }^{\prime }\right\vert ^{2}+I^{\prime }+I_{\ast }^{\prime }\text{%
.}  \label{CIEd}
\end{equation}

\begin{remark}
\label{R1}In an attempt to apply a unified approach, dependence on internal
energies will still, in general, be indicated also for monatomic species,
but in those cases the internal energies are constant (by technical reasons
assumed to be one), and any integration with respect to an internal energy
variable $I$ etc. for a monatomic species, conventionally, is assumed to
include a Dirac's delta function $\delta _{1}\left( I-1\right) $ etc. .
\end{remark}

\subsection{Collision operator\label{S2.1}}

The (vector) collision operator $Q=\left( Q_{1},...,Q_{s}\right) $ has
components that can be written in the following form 
\begin{eqnarray*}
&&Q_{\alpha }(f,f)\\
&=&\sum_{\beta =1}^{s}Q_{\alpha \beta }(f,f)=\sum_{\beta
=1}^{s}\int_{\left( \mathbb{R}^{3}\times \mathbb{R}_{+}\right)
^{3}}W_{\alpha \beta }\Lambda _{\alpha \beta }(f)\,d\boldsymbol{\xi }_{\ast
}d\boldsymbol{\xi }^{\prime }d\boldsymbol{\xi }_{\ast }^{\prime }dI_{\ast
}dI^{\prime }dI_{\ast }^{\prime }\text{,} \\
&&\text{where }\Lambda _{\alpha \beta }(f):=\frac{f_{\alpha }^{\prime
}f_{\beta \ast }^{\prime }}{\left( I^{\prime }\right) ^{\delta ^{\left(
\alpha \right) }/2-1}\left( I_{\ast }^{\prime }\right) ^{\delta ^{\left(
\beta \right) }/2-1}}-\frac{f_{\alpha }f_{\beta \ast }}{I^{\delta ^{\left(
\alpha \right) }/2-1}I_{\ast }^{\delta ^{\left( \beta \right) }/2-1}}\text{.}
\end{eqnarray*}%
Here and below the abbreviations%
\begin{equation}
f_{\alpha \ast }=f_{\alpha }\left( t,\mathbf{x},\mathbf{Z}_{\ast }\right) 
\text{, }f_{\alpha }^{\prime }=f_{\alpha }\left( t,\mathbf{x},\mathbf{Z}%
^{\prime }\right) \text{, and }f_{\alpha \ast }^{\prime }=f_{\alpha }\left(
t,\mathbf{x},\mathbf{Z}_{\ast }^{\prime }\right)  \label{a1}
\end{equation}%
where $\mathbf{Z}_{\ast }$, $\mathbf{Z}^{\prime }$, and $\mathbf{Z}_{\ast
}^{\prime }$, are defined as the natural extension of definition $\left( \ref%
{z1}\right) $, i.e. $\mathbf{Z_{\ast }=Z_{\ast \alpha }}=\left\{ 
\begin{array}{l}
\text{ \ \ }\boldsymbol{\xi }_{\ast }\text{ \ \ \ \ \ \ \ for }\alpha \in
\left\{ 1,...,s_{0}\right\} \text{ \ \ } \\ 
\left( \boldsymbol{\xi }_{\ast },I_{\ast }\right) \text{ \ \ for }\alpha \in
\left\{ s_{0}+1,...,s\right\}%
\end{array}%
\right. $ etc., \ are used for $\alpha \in \left\{ 1,...,s\right\} $.
Moreover, $\delta ^{\left( 1\right) }=...=\delta ^{\left( s_{0}\right) }=2$,
while $\delta ^{\left( \alpha \right) }$, with $\delta ^{\left( \alpha
\right) }\geq 2$, denote the number of internal degrees of freedom of the
species for $\alpha \in \left\{ s_{0}+1,...,s\right\} $.

The transition probabilities $W_{\alpha \beta }$ are of the form, cf. \cite%
{Be-23a,Be-23b},%
\begin{eqnarray}
&&W_{\alpha \beta }=W_{\alpha \beta }(\boldsymbol{\xi },\boldsymbol{\xi }%
_{\ast },I,I_{\ast }\left\vert \boldsymbol{\xi }^{\prime },\boldsymbol{\xi }%
_{\ast }^{\prime },I^{\prime },I_{\ast }^{\prime }\right. )  \notag \\
&=&\left( m_{\alpha }+m_{\beta }\right) ^{2}m_{\alpha }m_{\beta }\left(
I^{\prime }\right) ^{\delta ^{\left( \alpha \right) }/2-1}\left( I_{\ast
}^{\prime }\right) ^{\delta ^{\left( \beta \right) }/2-1}\sigma _{\alpha
\beta }^{\prime }\frac{\left\vert \mathbf{g}^{\prime }\right\vert }{%
\left\vert \mathbf{g}\right\vert }\widehat{\delta }_{1}\widehat{\delta }_{3}
\notag \\
&=&\left( m_{\alpha }+m_{\beta }\right) ^{2}m_{\alpha }m_{\beta }I^{\delta
^{\left( \alpha \right) }/2-1}I_{\ast }^{\delta ^{\left( \beta \right)
}/2-1}\sigma _{\alpha \beta }\frac{\left\vert \mathbf{g}\right\vert }{%
\left\vert \mathbf{g}^{\prime }\right\vert }\widehat{\delta }_{1}\widehat{%
\delta }_{3}\text{,}  \notag \\
&&\text{where }\sigma _{\alpha \beta }=\sigma _{\alpha \beta }\left(
\left\vert \mathbf{g}\right\vert ,\cos \theta ,I,I_{\ast },I^{\prime
},I_{\ast }^{\prime }\right) >0\text{ and }  \notag \\
&&\sigma _{\alpha \beta }^{\prime }=\sigma _{\alpha \beta }\left( \left\vert 
\mathbf{g}^{\prime }\right\vert ,\left\vert \cos \theta \right\vert
,I^{\prime },I_{\ast }^{\prime },I,I_{\ast }\right) >0\text{ a.e., with} 
\notag \\
&&\widehat{\delta }_{1}=\delta _{1}\left( \frac{1}{2}\left( m_{\alpha
}\left\vert \boldsymbol{\xi }\right\vert ^{2}+m_{\beta }\left\vert 
\boldsymbol{\xi }_{\ast }\right\vert ^{2}-m_{\alpha }\left\vert \boldsymbol{%
\xi }^{\prime }\right\vert ^{2}-m_{\beta }\left\vert \boldsymbol{\xi }_{\ast
}^{\prime }\right\vert ^{2}\right) -\Delta I\right) \text{,}  \notag \\
&&\widehat{\delta }_{3}=\delta _{3}\left( m_{\alpha }\boldsymbol{\xi }%
+m_{\beta }\boldsymbol{\xi }_{\ast }-m_{\alpha }\boldsymbol{\xi }^{\prime
}-m_{\beta }\boldsymbol{\xi }_{\ast }^{\prime }\right) \text{, }\cos \theta =%
\frac{\mathbf{g}\cdot \mathbf{g}^{\prime }}{\left\vert \mathbf{g}\right\vert
\left\vert \mathbf{g}^{\prime }\right\vert }\text{,}  \notag \\
&&\mathbf{g}=\boldsymbol{\xi }-\boldsymbol{\xi }_{\ast }\text{, }\mathbf{g}%
^{\prime }=\boldsymbol{\xi }^{\prime }-\boldsymbol{\xi }_{\ast }^{\prime }%
\text{, and }\Delta I=I^{\prime }+I_{\ast }^{\prime }-I-I_{\ast }\text{.}
\label{tp}
\end{eqnarray}%
Here $\delta _{3}$ and $\delta _{1}$ denote the Dirac's delta function in $%
\mathbb{R}^{3}$ and $\mathbb{R}$, respectively - $\widehat{\delta }_{1}$ and 
$\widehat{\delta }_{3}$ taking the conservation of momentum and total energy
into account. Note, cf. Remark $\ref{R1}$, that for $\alpha \in \left\{
1,...,s_{0}\right\} $ the scattering cross sections $\sigma _{\alpha \beta }$
are independent of $I$ and $I^{\prime }$, while correspondingly, for $\beta
\in \left\{ 1,...,s_{0}\right\} $ the scattering cross sections $\sigma
_{\alpha \beta }$ are independent of $I_{\ast }$ and $I_{\ast }^{\prime }$.

Furthermore, it is assumed that the scattering cross sections $\sigma
_{\alpha \beta }$, $\left\{ \alpha ,\beta \right\} \subseteq \left\{
1,...,s\right\} $, satisfy the microreversibility conditions%
\begin{eqnarray}
&&I^{\delta ^{\left( \alpha \right) }/2-1}I_{\ast }^{\delta ^{\left( \beta
\right) }/2-1}\left\vert \mathbf{g}\right\vert ^{2}\sigma _{\alpha \beta
}\left( \left\vert \mathbf{g}\right\vert ,\left\vert \cos \theta \right\vert
,I,I_{\ast },I^{\prime },I_{\ast }^{\prime }\right)  \notag \\
&=&\left( I^{\prime }\right) ^{\delta ^{\left( \alpha \right) }/2-1}\left(
I_{\ast }^{\prime }\right) ^{\delta ^{\left( \beta \right) }/2-1}\left\vert 
\mathbf{g}^{\prime }\right\vert ^{2}\sigma _{\alpha \beta }\left( \left\vert 
\mathbf{g}^{\prime }\right\vert ,\left\vert \cos \theta \right\vert
,I^{\prime },I_{\ast }^{\prime },I,I_{\ast }\right) \text{.}  \label{mr}
\end{eqnarray}%
Furthermore, to obtain invariance of change of particles in a collision, it
is assumed that the scattering cross sections $\sigma _{\alpha \beta }$, $%
\left\{ \alpha ,\beta \right\} \subseteq \left\{ 1,...,s\right\} $, satisfy
the symmetry relations%
\begin{equation}
\sigma _{\alpha \beta }\left( \left\vert \mathbf{g}\right\vert ,\cos \theta
,I,I_{\ast },I^{\prime },I_{\ast }^{\prime }\right) =\sigma _{\beta \alpha
}\left( \left\vert \mathbf{g}\right\vert ,\cos \theta ,I_{\ast },I,I_{\ast
}^{\prime },I^{\prime }\right) \text{,}  \label{sr}
\end{equation}%
while 
\begin{eqnarray}
\sigma _{\alpha \alpha } &=&\sigma _{\alpha \alpha }\left( \left\vert 
\mathbf{g}\right\vert ,\left\vert \cos \theta \right\vert ,I,I_{\ast
},I^{\prime },I_{\ast }^{\prime }\right) =\sigma _{\alpha \alpha }\left(
\left\vert \mathbf{g}\right\vert ,\left\vert \cos \theta \right\vert
,I_{\ast },I,I^{\prime },I_{\ast }^{\prime }\right)  \notag \\
&=&\sigma _{\alpha \alpha }\left( \left\vert \mathbf{g}\right\vert
,\left\vert \cos \theta \right\vert ,I_{\ast },I,I_{\ast }^{\prime
},I^{\prime }\right)  \label{sr1}
\end{eqnarray}%
The invariance under change of particles in a collision, which follows
directly by the definition of the transition probability $\left( \ref{tp}%
\right) $ and the symmetry relations $\left( \ref{sr}\right) ,\left( \ref%
{sr1}\right) $ for the collision frequency, and the microreversibility of
the collisions $\left( \ref{mr}\right) $, implies that the transition
probabilities $\left( \ref{tp}\right) $ satisfy the relations

\begin{eqnarray}
W_{\alpha \beta }(\boldsymbol{\xi },\boldsymbol{\xi }_{\ast },I,I_{\ast
}\left\vert \boldsymbol{\xi }^{\prime },\boldsymbol{\xi }_{\ast }^{\prime
},I^{\prime },I_{\ast }^{\prime }\right. ) &=&W_{\beta \alpha }(\boldsymbol{%
\xi }_{\ast },\boldsymbol{\xi },I_{\ast },I\left\vert \boldsymbol{\xi }%
_{\ast }^{\prime },\boldsymbol{\xi }^{\prime },I_{\ast }^{\prime },I^{\prime
}\right. )  \notag \\
W_{\alpha \beta }(\boldsymbol{\xi },\boldsymbol{\xi }_{\ast },I,I_{\ast
}\left\vert \boldsymbol{\xi }^{\prime },\boldsymbol{\xi }_{\ast }^{\prime
},I^{\prime },I_{\ast }^{\prime }\right. ) &=&W_{\alpha \beta }(\boldsymbol{%
\xi }^{\prime },\boldsymbol{\xi }_{\ast }^{\prime },I^{\prime },I_{\ast
}^{\prime }\left\vert \boldsymbol{\xi },\boldsymbol{\xi }_{\ast },I,I_{\ast
}\right. )  \notag \\
W_{\alpha \alpha }(\boldsymbol{\xi },\boldsymbol{\xi }_{\ast },I,I_{\ast
}\left\vert \boldsymbol{\xi }^{\prime },\boldsymbol{\xi }_{\ast }^{\prime
},I^{\prime },I_{\ast }^{\prime }\right. ) &=&W_{\alpha \alpha }(\boldsymbol{%
\xi },\boldsymbol{\xi }_{\ast },I,I_{\ast }\left\vert \boldsymbol{\xi }%
_{\ast }^{\prime },\boldsymbol{\xi }^{\prime },I_{\ast }^{\prime },I^{\prime
}\right. )\text{.}  \label{rel1}
\end{eqnarray}

Applying known properties of Dirac's delta function, the transition
probabilities may be transformed to 
\begin{eqnarray*}
&&W_{\alpha \beta }=W_{\alpha \beta }(\boldsymbol{\xi },\boldsymbol{\xi }%
_{\ast },I,I_{\ast }\left\vert \boldsymbol{\xi }^{\prime },\boldsymbol{\xi }%
_{\ast }^{\prime },I^{\prime },I_{\ast }^{\prime }\right. ) \\
&=&\left( m_{\alpha }+m_{\beta }\right) ^{2}m_{\alpha }m_{\beta }\left(
I^{\prime }\right) ^{\delta ^{\left( \alpha \right) }/2-1}\left( I_{\ast
}^{\prime }\right) ^{\delta ^{\left( \beta \right) }/2-1}\sigma _{\alpha
\beta }^{\prime }\frac{\left\vert \mathbf{g}^{\prime }\right\vert }{%
\left\vert \mathbf{g}\right\vert } \\
&&\times \delta _{3}\left( \left( m_{\alpha }+m_{\beta }\right) \left( 
\mathbf{G}_{\alpha \beta }-\mathbf{G}_{\alpha \beta }^{\prime }\right)
\right) \delta _{1}\left( \dfrac{\mu_{\alpha \beta}}{2}\left( \left\vert \mathbf{g}\right\vert ^{2}-\left\vert 
\mathbf{g}^{\prime }\right\vert ^{2}\right) -\Delta I\right) \\
&=&2\left( I^{\prime }\right) ^{\delta ^{\left( \alpha \right) }/2-1}\left(
I_{\ast }^{\prime }\right) ^{\delta ^{\left( \beta \right) }/2-1}\sigma
_{\alpha \beta }^{\prime }\frac{\left\vert \mathbf{g}^{\prime }\right\vert }{%
\left\vert \mathbf{g}\right\vert } \\
&&\times \delta _{3}\left( \mathbf{G}_{\alpha \beta }-\mathbf{G}_{\alpha
\beta }^{\prime }\right) \\
&=&\left( I^{\prime }\right) ^{\delta ^{\left( \alpha \right) }/2-1}\left(
I_{\ast }^{\prime }\right) ^{\delta ^{\left( \beta \right) }/2-1}\sigma
_{\alpha \beta }^{\prime }\frac{1}{\left\vert \mathbf{g}\right\vert }\mathbf{%
1}_{\mu_{\alpha \beta}\left\vert \mathbf{g}\right\vert ^{2}>2\Delta I}\delta _{3}\left( \mathbf{G}_{\alpha \beta }-\mathbf{G}_{\alpha
\beta }^{\prime }\right)\\
&&\times \delta
_{1}\left( \sqrt{\left\vert \mathbf{g}\right\vert ^{2}-\frac{2\Delta I}{%
\mu_{\alpha \beta}}}-\left\vert \mathbf{g}^{\prime }\right\vert \right)  \\
&=&I^{\delta ^{\left( \alpha \right) }/2-1}I_{\ast }^{\delta ^{\left( \beta
\right) }/2-1}\sigma _{\alpha \beta }\frac{\left\vert \mathbf{g}\right\vert 
}{\left\vert \mathbf{g}^{\prime }\right\vert ^{2}}\mathbf{1}_{\mu_{\alpha \beta}\left\vert 
\mathbf{g}\right\vert ^{2}>2\Delta I}\delta _{1}\left( \sqrt{%
\left\vert \mathbf{g}\right\vert ^{2}-\frac{2\Delta I}{%
\mu_{\alpha \beta}}}-\left\vert 
\mathbf{g}^{\prime }\right\vert \right) \\
&&\times \delta _{3}\left( \mathbf{G}_{\alpha \beta }-\mathbf{G}_{\alpha
\beta }^{\prime }\right) \text{, with } \\
\text{ } &&\mathbf{G}_{\alpha \beta }=\frac{m_{\alpha }\boldsymbol{\xi }%
+m_{\beta }\boldsymbol{\xi }_{\ast }}{m_{\alpha }+m_{\beta }}\text{, }%
\mathbf{G}_{\alpha \beta }^{\prime }=\frac{m_{\alpha }\boldsymbol{\xi }%
^{\prime }+m_{\beta }\boldsymbol{\xi }_{\ast }^{\prime }}{m_{\alpha
}+m_{\beta }}\text{, and } \mu_{\alpha \beta}=\dfrac{m_{\alpha }m_{\beta }}{\left( m_{\alpha
}+m_{\beta }\right) }\text{.}
\end{eqnarray*}

\begin{remark}
Note that%
\begin{equation*}
\delta _{1}\left( \dfrac{\mu_{\alpha \beta}}{2}\left( \left\vert \mathbf{g}\right\vert ^{2}-\left\vert \mathbf{g}%
^{\prime }\right\vert ^{2}\right) -\Delta I\right) =\delta _{1}\left(
E_{\alpha \beta }-E_{\alpha \beta }^{\prime }\right) ,
\end{equation*}%
where $E_{\alpha \beta }=\dfrac{\mu_{\alpha \beta}}{2}\left\vert \mathbf{g}\right\vert ^{2}+I+I_{\ast }\ $and 
$E_{\alpha \beta }^{\prime }=\dfrac{\mu_{\alpha \beta}}{2}\left\vert \mathbf{g}^{\prime }\right\vert
^{2}+I^{\prime }+I_{\ast }^{\prime }$.
\end{remark}

Observe that, by a series of change of variables:\newline
$\left\{ \boldsymbol{\xi }^{\prime },\boldsymbol{\xi }_{\ast }^{\prime
}\right\} \rightarrow \!\left\{ \mathbf{g}^{\prime }=\boldsymbol{\xi }%
^{\prime }-\boldsymbol{\xi }_{\ast }^{\prime }\text{,}\mathbf{G}_{\alpha
\beta }^{\prime }=\dfrac{m_{\alpha }\boldsymbol{\xi }^{\prime }+m_{\beta }%
\boldsymbol{\xi }_{\ast }^{\prime }}{m_{\alpha }+m_{\beta }}\!\right\} $,
followed by a change to spherical\ coordinates $\left\{ \mathbf{g}^{\prime
}\right\} \rightarrow \left\{ \left\vert \mathbf{g}^{\prime }\right\vert ,%
\boldsymbol{\omega \,}=\dfrac{\mathbf{g}^{\prime }}{\left\vert \mathbf{g}%
^{\prime }\right\vert }\right\} $, then, if $\beta \in \left\{
s_{0}+1,...,s\right\} $,  the change 
$\left\{ \left\vert \mathbf{g}^{\prime }\right\vert ,I_{\ast }^{\prime
}\right\} \rightarrow \left\{ R=\dfrac{\mu_{\alpha \beta}}{2}\dfrac{\left\vert \mathbf{g}^{\prime
}\right\vert ^{2}}{E_{\alpha \beta }},E_{\alpha \beta }^{\prime }=\dfrac{%
\mu_{\alpha \beta}}{2}\left\vert 
\mathbf{g}^{\prime }\right\vert ^{2}+I^{\prime }+I_{\ast }^{\prime }\right\} 
$, and finally, if also $\alpha \in \left\{ s_{0}+1,...,s\right\} $, the change $\left\{
I^{\prime }\right\} \rightarrow \left\{ r=\dfrac{I^{\prime }}{\left(
1-R\right) E_{\alpha \beta }}\right\} $%
\begin{eqnarray}
&&d\boldsymbol{\xi }^{\prime }d\boldsymbol{\xi }_{\ast }^{\prime }dI^{\prime
}dI_{\ast }^{\prime }=d\mathbf{G}_{\alpha \beta }^{\prime }d\mathbf{g}%
^{\prime }dI^{\prime }dI_{\ast }^{\prime }  \notag \\
&=&\left\vert \mathbf{g}^{\prime }\right\vert ^{2}d\mathbf{G}_{\alpha \beta
}^{\prime }d\left\vert \mathbf{g}^{\prime }\right\vert d\boldsymbol{\omega \,%
}dI^{\prime }dI_{\ast }^{\prime }  \notag \\
&=&\sqrt{2}\left( \frac{E_{\alpha \beta }}{\mu_{\alpha \beta }}%
\right) ^{3/2}R^{1/2}dRd\boldsymbol{\omega \,}d%
\mathbf{G}_{\alpha \beta }^{\prime }dI^{\prime }dE_{\alpha \beta }^{\prime }
\notag \\
&=& \frac{\sqrt{2}}{\mu_{\alpha \beta}^{3/2}}E_{\alpha \beta }^{5/2} (1-R)R^{1/2}drdRd\boldsymbol{\omega \,}d%
\mathbf{G}_{\alpha \beta }^{\prime }dE_{\alpha \beta }^{\prime }\text{.}
\label{df1}
\end{eqnarray}%
Then for two monatomic species, with $\left\{ \alpha ,\beta \right\} $ $%
\subset \left\{ 1,...,s_{0}\right\} $, (mono/mono-case) 
\begin{eqnarray*}
&&Q_{\alpha \beta }(f,f) \\
&=&\int_{\left( \mathbb{R}^{3}\right) ^{2}\times \mathbb{R}_{+}\mathbb{%
\times S}^{2}}W_{\alpha \beta }\left\vert \mathbf{g}^{\prime }\right\vert
^{2}\left( f_{\alpha }^{\prime }f_{\beta \ast }^{\prime }-f_{\alpha
}f_{\beta \ast }\right) \,d\boldsymbol{\xi }_{\ast }d\mathbf{G}_{\alpha
\beta }^{\prime }d\left\vert \mathbf{g}^{\prime }\right\vert d\boldsymbol{%
\omega } \\
&=&\int_{\mathbb{R}^{3}\mathbb{\times S}^{2}}B_{0\alpha \beta }\left(
f_{\alpha }^{\prime }f_{\beta \ast }^{\prime }-f_{\alpha }f_{\beta \ast
}\right) \,d\boldsymbol{\xi }_{\ast }d\boldsymbol{\omega }\text{, with }%
B_{0\alpha \beta }=\sigma _{\alpha \beta }\left\vert \mathbf{g}\right\vert 
\text{,}
\end{eqnarray*}%
or, for a monatomic species, with $\alpha \in \left\{ 1,...,s_{0}\right\} $,
and a polyatomic species, with $\beta \in \left\{ s_{0}+1,...,s\right\} $,
(mono/poly-case),%
\begin{eqnarray*}
&&Q_{\alpha \beta }(f,f) \\
&=&\int_{\left( \mathbb{R}^{3}\right) ^{2}\times \left( \mathbb{R}%
_{+}\right) ^{2}\times \lbrack 0,1]\mathbb{\times S}^{2}}W_{\alpha \beta }%
\sqrt{2}\left( \frac{E_{\alpha \beta }}{\mu_{\alpha \beta }}%
\right) ^{3/2}\left( \frac{f_{\alpha }^{\prime }f_{\beta \ast }^{\prime }}{\left(
I_{\ast }^{\prime }\right) ^{\delta ^{\left( \beta \right) }/2-1}}-\frac{%
f_{\alpha }f_{\beta \ast }}{I_{\ast }^{\delta ^{\left( \beta \right) }/2-1}}%
\right) \\
&&\times R^{1/2}d\boldsymbol{\xi }_{\ast }d\mathbf{G}%
_{\alpha \beta }^{\prime }dRd\boldsymbol{\omega \,}dI_{\ast }dE_{\alpha
\beta }^{\prime } \\
&=&\int_{\mathbb{R}^{3}\times \mathbb{R}_{+}\times \lbrack 0,1]\mathbb{%
\times S}^{2}}B_{1\alpha \beta }\left( \frac{f_{\alpha }^{\prime }f_{\beta
\ast }^{\prime }}{\left( I_{\ast }^{\prime }\right) ^{\delta ^{\left( \beta
\right) }/2-1}}-\frac{f_{\alpha }f_{\beta \ast }}{I_{\ast }^{\delta ^{\left(
\beta \right) }/2-1}}\right) \left( 1-R\right) ^{\delta ^{\left( \beta
\right) }/2-1} \\
&&\times R^{1/2}I_{\ast }^{\delta ^{\left( \beta \right) }/2-1}d\boldsymbol{\xi }%
_{\ast }dRd\boldsymbol{\omega \,}dI_{\ast }\text{, with} \\
&&B_{1\alpha \beta }=\frac{\sigma _{\alpha \beta }\sqrt{2/\mu_{\alpha \beta }}\left\vert 
\mathbf{g}\right\vert }{\left( I_{\ast }^{\prime }\right) ^{\delta ^{\left(
\beta \right) }/2-1}\sqrt{\left\vert \mathbf{g}\right\vert ^{2}-\dfrac{2\Delta I}{%
\mu_{\alpha \beta}}}}E_{\alpha \beta }^{\left( \delta ^{\left( \beta \right)
}+1\right) /2}=\frac{\sigma _{\alpha \beta }\left\vert \mathbf{g}\right\vert
E_{\alpha \beta }}{R^{1/2}(1-R)^{\delta ^{\left( \beta \right) }/2-1}}
\end{eqnarray*}%
or, for a polyatomic species, with $\alpha \in \left\{ s_{0}+1,...,s\right\} 
$, and a monatomic species, with $\beta \in \left\{ 1,...,s_{0}\right\} $,
(poly/mono-case),%
\begin{eqnarray*}
&&Q_{\alpha \beta }(f,f) \\
&=&\int_{\left( \mathbb{R}^{3}\right) ^{2}\times \mathbb{R}_{+}\times
\lbrack 0,1]\mathbb{\times S}^{2}}W_{\alpha \beta }\sqrt{2}\left( \frac{E_{\alpha \beta }}{\mu_{\alpha \beta }}%
\right) ^{3/2}\left( \frac{%
f_{\alpha }^{\prime }f_{\beta \ast }^{\prime }}{\left( I^{\prime }\right)
^{\delta ^{\left( \alpha \right) }/2-1}}-\frac{f_{\alpha }f_{\beta \ast }}{%
I^{\delta ^{\left( \alpha \right) }/2-1}}\right) \\
&&\times R^{1/2}\,d\boldsymbol{\xi }_{\ast }d\mathbf{G%
}_{\alpha \beta }^{\prime }dRd\boldsymbol{\omega \,}dE_{\alpha \beta
}^{\prime } \\
&=&\int_{\mathbb{R}^{3}\times \lbrack 0,1]\mathbb{\times S}^{2}}B_{1\beta
\alpha }\left( \frac{f_{\alpha }^{\prime }f_{\beta \ast }^{\prime }}{\left(
I^{\prime }\right) ^{\delta ^{\left( \alpha \right) }/2-1}}-\frac{f_{\alpha
}f_{\beta \ast }}{I^{\delta ^{\left( \alpha \right) }/2-1}}\right) \left(
1-R\right) ^{\delta ^{\left( \alpha \right) }/2-1}R^{1/2} \\
&&\times I^{\delta ^{\left( \alpha \right) }/2-1}d\boldsymbol{\xi }_{\ast
}dRd\boldsymbol{\omega }
\end{eqnarray*}%
and, finally, for two polyatomic species, with $\left\{ \alpha ,\beta
\right\} $ $\subset \left\{ s_{0}+1,...,s\right\} $, (poly/poly-case) 
\begin{eqnarray*}
&&Q_{\alpha \beta }(f,f) \\
&=&\int\limits_{\left( \mathbb{R}^{3}\right) ^{2}\times \left( \mathbb{R}%
_{+}\right) ^{2}\times \lbrack 0,1]^{2}\mathbb{\times S}^{2}} \left( \frac{f_{\alpha }^{\prime }f_{\beta \ast }^{\prime }}{\left(
I^{\prime }\right) ^{\delta ^{\left( \alpha \right) }/2-1}\left( I_{\ast
}^{\prime }\right) ^{\delta ^{\left( \beta \right) }/2-1}}-\frac{f_{\alpha
}f_{\beta \ast }}{I^{\delta ^{\left( \alpha \right) }/2-1}I_{\ast }^{\delta
^{\left( \beta \right) }/2-1}}\right) \\
&&\times W_{\alpha \beta
}\frac{\sqrt{2}}{\mu_{\alpha \beta}^{3/2}}E_{\alpha \beta }^{5/2}(1-R)R^{1/2}d\boldsymbol{\xi }_{\ast
}drdRd\boldsymbol{\omega \,}d\mathbf{G}_{\alpha \beta }^{\prime }dE_{\alpha
\beta }^{\prime }dI_{\ast } \\
&=&\int\limits_{\mathbb{R}^{3}\times \mathbb{R}_{+}\times \lbrack 0,1]^{2}\mathbb{%
\times S}^{2}}\left( \frac{f_{\alpha }^{\prime }f_{\beta
\ast }^{\prime }}{\left( I^{\prime }\right) ^{\delta ^{\left( \alpha \right)
}/2-1}\left( I_{\ast }^{\prime }\right) ^{\delta ^{\left( \beta \right)
}/2-1}}-\frac{f_{\alpha }f_{\beta \ast }}{I^{\delta ^{\left( \alpha \right)
}/2-1}I_{\ast }^{\delta ^{\left( \beta \right) }/2-1}}\right) \\
&&\times B_{2\alpha \beta }r^{\delta ^{\left( \alpha \right) }/2-1}\left( 1-r\right) ^{\delta
^{\left( \beta \right) }/2-1}(1-R)^{\left( \delta ^{\left( \alpha \right)
}+\delta ^{\left( \beta \right) }\right) /2-1}R^{1/2} \\
&&\times I^{\delta ^{\left( \alpha \right) }/2-1}I_{\ast }^{\delta ^{\left(
\beta \right) }/2-1}d\boldsymbol{\xi }_{\ast }drdRd\boldsymbol{\omega \,}%
dI_{\ast }\text{, } \\
&&\text{with }B_{2\alpha \beta }=\sigma _{\alpha \beta }\frac{\sqrt{2/\mu_{\alpha \beta }}\left\vert 
\mathbf{g}\right\vert E_{\alpha \beta }^{\left( \delta ^{\left( \alpha
\right) }+\delta ^{\left( \beta \right) }+1\right) /2}}{\left( I^{\prime
}\right) ^{\delta ^{\left( \alpha \right) }/2-1}\left( I_{\ast }^{\prime
}\right) ^{\delta ^{\left( \beta \right) }/2-1}\sqrt{\left\vert \mathbf{g}%
\right\vert ^{2}-\dfrac{2\Delta I}{\mu_{\alpha \beta}}}} \\
&=&\frac{\sigma _{\alpha \beta }\left\vert \mathbf{g}\right\vert E_{\alpha
\beta }^{2}}{r^{\delta ^{\left( \alpha \right) }/2-1}\left( 1-r\right)
^{\delta ^{\left( \beta \right) }/2-1}(1-R)^{\left( \delta ^{\left( \alpha
\right) }+\delta ^{\left( \beta \right) }\right) /2-2}R^{1/2}}
\end{eqnarray*}%
where%
\begin{equation*}
\left\{ 
\begin{array}{l}
\boldsymbol{\xi }^{\prime }=\mathbf{G}_{\alpha \beta }+\omega \dfrac{%
m_{\beta }}{m_{\alpha }+m_{\beta }}\sqrt{\left\vert \mathbf{g}\right\vert
^{2}-\dfrac{2\Delta I}{\mu_{\alpha \beta}}}\medskip
\\ 
\boldsymbol{\xi }_{\ast }^{\prime }=\mathbf{G}_{\alpha \beta }-\omega \dfrac{%
m_{\alpha }}{m_{\alpha }+m_{\beta }}\sqrt{\left\vert \mathbf{g}\right\vert
^{2}-\dfrac{2\Delta I}{\mu_{\alpha \beta}}}%
\end{array}%
\right. \text{, }\omega \in S^{2}\text{,}
\end{equation*}%
resulting in more familiar forms of the Boltzmann collision operators for
mixtures of monatomic and/or polyatomic molecules modeled with a continuous
energy variable, cf., e.g., \cite{DMS-05,BBBD-18}. Here and below the
internal energy gaps are given by $\Delta I=0$ in the mono/mono-case, $%
\Delta I=I_{\ast }^{\prime }-I_{\ast }$ in the mono/poly-case, $\Delta
I=I^{\prime }-I$ in the poly/mono-case, while in the poly/poly-case $\Delta I=I^{\prime }+I_{\ast
}^{\prime }-I-I_{\ast }$.

\subsection{Collision invariants and Maxwellian distributions\label{S2.2}}

The following lemma follows directly by the relations $\left( \ref{rel1}%
\right) $.

\begin{lemma}
\label{L0}The measures 
\begin{equation*}
dA_{\alpha \beta }=W_{\alpha \beta }(\boldsymbol{\xi },\boldsymbol{\xi }%
_{\ast },I,I_{\ast }\left\vert \boldsymbol{\xi }^{\prime },\boldsymbol{\xi }%
_{\ast }^{\prime },I^{\prime },I_{\ast }^{\prime }\right. )d\boldsymbol{\xi }%
\,d\boldsymbol{\xi }_{\ast }d\boldsymbol{\xi }^{\prime }d\boldsymbol{\xi }%
_{\ast }^{\prime }dIdI_{\ast }dI^{\prime }dI_{\ast }^{\prime }
\end{equation*}%
are invariant under the (ordered) interchange%
\begin{equation}
\left( \boldsymbol{\xi },\boldsymbol{\xi }_{\ast },I,I_{\ast }\right)
\leftrightarrow \left( \boldsymbol{\xi }^{\prime },\boldsymbol{\xi }_{\ast
}^{\prime },I^{\prime },I_{\ast }^{\prime }\right) \text{,}  \label{tr}
\end{equation}%
of variables for $\left( \alpha ,\beta \right) \subseteq \left\{
1,...,s\right\} $, while%
\begin{equation*}
dA_{\alpha \beta }+dA_{\beta \alpha }\text{, }\left\{ \alpha ,\beta \right\}
\subseteq \left\{ 1,...,s\right\} \text{,}
\end{equation*}%
are invariant under the (ordered) interchange of variables%
\begin{equation}
\left( \boldsymbol{\xi },\boldsymbol{\xi }^{\prime },I,I^{\prime }\right)
\leftrightarrow \left( \boldsymbol{\xi }_{\ast },\boldsymbol{\xi }_{\ast
}^{\prime },I_{\ast },I_{\ast }^{\prime }\right) \text{.}  \label{tr1}
\end{equation}
\end{lemma}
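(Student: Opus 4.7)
The plan is to read off both invariance statements directly from the three relations collected in $(\ref{rel1})$, since the differential part $d\boldsymbol{\xi}\,d\boldsymbol{\xi}_{\ast}d\boldsymbol{\xi}^{\prime}d\boldsymbol{\xi}_{\ast}^{\prime}dIdI_{\ast}dI^{\prime}dI_{\ast}^{\prime}$ is merely a product of Lebesgue measures and so is visibly invariant under any permutation of the eight integration variables. Thus the entire question reduces to how the transition probability $W_{\alpha\beta}$ transforms.

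First I would dispatch the invariance of $dA_{\alpha\beta}$ under $(\ref{tr})$. Applying the interchange $(\boldsymbol{\xi},\boldsymbol{\xi}_{\ast},I,I_{\ast})\leftrightarrow(\boldsymbol{\xi}^{\prime},\boldsymbol{\xi}_{\ast}^{\prime},I^{\prime},I_{\ast}^{\prime})$ to $W_{\alpha\beta}(\boldsymbol{\xi},\boldsymbol{\xi}_{\ast},I,I_{\ast}\,|\,\boldsymbol{\xi}^{\prime},\boldsymbol{\xi}_{\ast}^{\prime},I^{\prime},I_{\ast}^{\prime})$ yields $W_{\alpha\beta}(\boldsymbol{\xi}^{\prime},\boldsymbol{\xi}_{\ast}^{\prime},I^{\prime},I_{\ast}^{\prime}\,|\,\boldsymbol{\xi},\boldsymbol{\xi}_{\ast},I,I_{\ast})$, which by the second (microreversibility) relation in $(\ref{rel1})$ equals the original $W_{\alpha\beta}$. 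This gives the first claim for every ordered pair $(\alpha,\beta)\subseteq\{1,\dots,s\}$.

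Next I would treat the second claim, the invariance of $dA_{\alpha\beta}+dA_{\beta\alpha}$ under the interchange $(\ref{tr1})$. If $\alpha\neq\beta$, applying $(\boldsymbol{\xi},\boldsymbol{\xi}^{\prime},I,I^{\prime})\leftrightarrow(\boldsymbol{\xi}_{\ast},\boldsymbol{\xi}_{\ast}^{\prime},I_{\ast},I_{\ast}^{\prime})$ to $W_{\alpha\beta}(\boldsymbol{\xi},\boldsymbol{\xi}_{\ast},I,I_{\ast}\,|\,\boldsymbol{\xi}^{\prime},\boldsymbol{\xi}_{\ast}^{\prime},I^{\prime},I_{\ast}^{\prime})$ gives $W_{\alpha\beta}(\boldsymbol{\xi}_{\ast},\boldsymbol{\xi},I_{\ast},I\,|\,\boldsymbol{\xi}_{\ast}^{\prime},\boldsymbol{\xi}^{\prime},I_{\ast}^{\prime},I^{\prime})$, which by the first relation in $(\ref{rel1})$ equals $W_{\beta\alpha}(\boldsymbol{\xi},\boldsymbol{\xi}_{\ast},I,I_{\ast}\,|\,\boldsymbol{\xi}^{\prime},\boldsymbol{\xi}_{\ast}^{\prime},I^{\prime},I_{\ast}^{\prime})$. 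Hence $dA_{\alpha\beta}$ is sent to $dA_{\beta\alpha}$, and by the same computation with the roles of $\alpha$ and $\beta$ swapped, $dA_{\beta\alpha}$ is sent to $dA_{\alpha\beta}$; the sum is therefore fixed.

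The only subtlety, which I would address separately, is the diagonal case $\alpha=\beta$: here $dA_{\alpha\beta}+dA_{\beta\alpha}=2\,dA_{\alpha\alpha}$ and one needs the extra symmetry provided by the third line of $(\ref{rel1})$, namely $W_{\alpha\alpha}(\boldsymbol{\xi},\boldsymbol{\xi}_{\ast},I,I_{\ast}\,|\,\boldsymbol{\xi}^{\prime},\boldsymbol{\xi}_{\ast}^{\prime},I^{\prime},I_{\ast}^{\prime})=W_{\alpha\alpha}(\boldsymbol{\xi},\boldsymbol{\xi}_{\ast},I,I_{\ast}\,|\,\boldsymbol{\xi}_{\ast}^{\prime},\boldsymbol{\xi}^{\prime},I_{\ast}^{\prime},I^{\prime})$; combining this with the $(\alpha,\beta)=(\alpha,\alpha)$ instance of the first relation gives invariance of $dA_{\alpha\alpha}$ by itself. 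Since no nontrivial analytic estimates are involved, no step is really an obstacle; the only thing to be careful about is matching the order of the swapped arguments to the precise form of each line in $(\ref{rel1})$, so I would present the computation as a short chain of equalities between $W$'s with all eight arguments written out.
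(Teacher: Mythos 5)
Your argument is correct and takes essentially the same route as the paper, which simply asserts that the lemma ``follows directly by the relations $\left( \ref{rel1}\right) $''; you have spelled out precisely the chain of substitutions intended, using the second relation of $\left( \ref{rel1}\right) $ for the interchange $\left( \ref{tr}\right) $ and the first for $\left( \ref{tr1}\right) $. The only (harmless) redundancy is in the diagonal case: the $(\alpha ,\alpha )$ instance of the first relation in $\left( \ref{rel1}\right) $ already yields invariance of $dA_{\alpha \alpha }$ under $\left( \ref{tr1}\right) $ on its own, so the third relation is not actually needed there.
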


The weak form of the collision operator $Q(f,f)$ reads%
\begin{eqnarray*}
\left( Q(f,f),g\right) &=&\sum_{\alpha ,\beta =1}^{s}\int_{\left( \mathbb{R}%
^{3}\times \mathbb{R}_{+}\right) ^{4}}\Lambda _{\alpha \beta }(f)g_{\alpha
}\,dA_{\alpha \beta } \\
&=&\sum_{\alpha ,\beta =1}^{s}\int_{\left( \mathbb{R}^{3}\times \mathbb{R}%
_{+}\right) ^{4}}\Lambda _{\alpha \beta }(f)g_{_{\beta \ast }}\,dA_{\alpha
\beta } \\
&=&-\sum_{\alpha ,\beta =1}^{s}\int_{\left( \mathbb{R}^{3}\times \mathbb{R}%
_{+}\right) ^{4}}\Lambda _{\alpha \beta }(f)g_{\alpha }^{\prime
}\,dA_{\alpha \beta } \\
&=&-\sum_{\alpha ,\beta =1}^{s}\int_{\left( \mathbb{R}^{3}\times \mathbb{R}%
_{+}\right) ^{4}}\Lambda _{\alpha \beta }(f)g_{\beta \ast }^{\prime
}\,dA_{\alpha \beta }
\end{eqnarray*}%
for any function $g=\left( g_{1},...,g_{s}\right) $, with $g_{\alpha
}=g_{\alpha }(\boldsymbol{\xi },I)$, such that the first integrals are
defined for all $\left\{ \alpha ,\beta \right\} \subseteq \left\{
1,...,s\right\} $, while the following equalities are obtained by applying
Lemma $\ref{L0}$.

Denote for any function $g=\left( g_{1},...,g_{s}\right) $, with $g_{\alpha
}=g_{\alpha }(\boldsymbol{\xi },I)$, 
\begin{equation*}
\Delta _{\alpha \beta }\left( g\right) :=g_{\alpha }+g_{_{\beta \ast
}}-g_{\alpha }^{\prime }-g_{\beta \ast }^{\prime }\text{.}
\end{equation*}

We have the following proposition.

\begin{proposition}
\label{P1}Let $g=\left( g_{1},...,g_{s}\right) $, with $g_{\alpha
}=g_{\alpha }(\mathbf{Z})$, be such that for all $\left\{ \alpha ,\beta
\right\} \subseteq \left\{ 1,...,s\right\} $%
\begin{equation*}
\int_{\left( \mathbb{R}^{3}\times \mathbb{R}_{+}\right) ^{4}}\Lambda
_{\alpha \beta }(f)g_{\alpha }\,dA_{\alpha \beta }\text{, }
\end{equation*}%
where $\Lambda _{\alpha
\beta }(f)=\dfrac{f_{\alpha }^{\prime }f_{\beta \ast }^{\prime }}{\left(
I^{\prime }\right) ^{\delta ^{\left( \alpha \right) }/2-1}\left( I_{\ast
}^{\prime }\right) ^{\delta ^{\left( \beta \right) }/2-1}}-\dfrac{f_{\alpha
}f_{\beta \ast }}{I^{\delta ^{\left( \alpha \right) }/2-1}I_{\ast }^{\delta
^{\left( \beta \right) }/2-1}}$, is defined. Then%
\begin{equation*}
\left( Q(f,f),g\right) =\frac{1}{4}\sum_{\alpha ,\beta =1}^{s}\int_{\left( 
\mathbb{R}^{3}\times \mathbb{R}_{+}\right) ^{4}}\Lambda _{\alpha \beta
}(f)\Delta _{\alpha \beta }\left( g\right) \,dA_{\alpha \beta }.
\end{equation*}
\end{proposition}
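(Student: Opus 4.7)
The plan is to derive the identity directly from the four equivalent expressions for $(Q(f,f),g)$ listed immediately before the proposition statement, which already give us the hard work. Specifically, denote
\begin{equation*}
A_1 = \sum_{\alpha,\beta=1}^{s}\!\int \Lambda_{\alpha\beta}(f)\,g_{\alpha}\,dA_{\alpha\beta},\quad A_2 = \sum \int \Lambda_{\alpha\beta}(f)\,g_{\beta\ast}\,dA_{\alpha\beta},
\end{equation*}
\begin{equation*}
A_3 = \sum\int \Lambda_{\alpha\beta}(f)\,g_{\alpha}^{\prime}\,dA_{\alpha\beta},\quad A_4 = \sum\int \Lambda_{\alpha\beta}(f)\,g_{\beta\ast}^{\prime}\,dA_{\alpha\beta}.
\end{equation*}
The identities recorded just before the proposition give $(Q(f,f),g) = A_1 = A_2 = -A_3 = -A_4$, under the assumption that all four integrals are well-defined (which follows from the hypothesis together with Lemma~\ref{L0} applied to the absolute values).

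The first step is simply to justify this chain of equalities once more as a sanity check: $A_1 = A_2$ uses the invariance of $dA_{\alpha\beta}+dA_{\beta\alpha}$ under $(\boldsymbol{\xi},I)\leftrightarrow(\boldsymbol{\xi}_{\ast},I_{\ast})$ from Lemma~\ref{L0}, combined with a relabeling of the summation indices $\alpha\leftrightarrow\beta$, noting that $\Lambda_{\alpha\beta}(f)$ transforms correctly into $\Lambda_{\beta\alpha}(f)$ after the relabeling because of the factorized structure of the numerators and denominators in $\Lambda_{\alpha\beta}$. The equalities $A_1 = -A_3$ and $A_2 = -A_4$ use the first invariance of Lemma~\ref{L0}, namely that $dA_{\alpha\beta}$ is invariant under the interchange $(\boldsymbol{\xi},\boldsymbol{\xi}_{\ast},I,I_{\ast})\leftrightarrow(\boldsymbol{\xi}^{\prime},\boldsymbol{\xi}_{\ast}^{\prime},I^{\prime},I_{\ast}^{\prime})$, together with the observation that this interchange sends $\Lambda_{\alpha\beta}(f)$ to $-\Lambda_{\alpha\beta}(f)$ (since the two terms in $\Lambda_{\alpha\beta}$ swap).

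The second step is the averaging: add the four identities $(Q(f,f),g) = A_1$, $(Q(f,f),g) = A_2$, $(Q(f,f),g) = -A_3$, $(Q(f,f),g) = -A_4$, yielding
\begin{equation*}
4\left(Q(f,f),g\right) = \sum_{\alpha,\beta=1}^{s}\int_{(\mathbb{R}^{3}\times\mathbb{R}_{+})^{4}}\Lambda_{\alpha\beta}(f)\bigl(g_{\alpha}+g_{\beta\ast}-g_{\alpha}^{\prime}-g_{\beta\ast}^{\prime}\bigr)dA_{\alpha\beta}.
\end{equation*}
Recognizing the parenthesis as $\Delta_{\alpha\beta}(g)$ and dividing by $4$ yields the claim.

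There is no real obstacle beyond bookkeeping: the substantive content is already packaged into Lemma~\ref{L0} and into the chain of equalities stated before the proposition; the only subtle point is making sure the integrability assumption on $\Lambda_{\alpha\beta}(f)g_{\alpha}\,dA_{\alpha\beta}$ for every ordered pair $(\alpha,\beta)$ is enough to legitimize the changes of variables producing $A_2$, $A_3$, $A_4$. This is handled by noting that each invariance in Lemma~\ref{L0} is a measure-preserving bijection, so integrability of the $A_1$-integrand transports to integrability of the $A_3$- and $A_4$-integrands; integrability of the $A_2$-integrand uses instead the joint invariance of $dA_{\alpha\beta}+dA_{\beta\alpha}$ under $(\boldsymbol{\xi},I)\leftrightarrow(\boldsymbol{\xi}_{\ast},I_{\ast})$, combined with the hypothesis applied to both orderings of the pair.
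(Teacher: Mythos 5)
Your proposal is correct and follows exactly the route the paper intends: the paper states the four equivalent forms of $\left( Q(f,f),g\right) $ (obtained from Lemma \ref{L0} and the relations $\left( \ref{rel1}\right) $) immediately before the proposition and leaves the averaging step implicit, which is precisely your summation and division by $4$. Your added remarks on transporting integrability through the measure-preserving interchanges are a harmless (and correct) elaboration of what the paper takes for granted.
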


\begin{definition}
A function $g=\left( g_{1},...,g_{s}\right) $, with $g_{\alpha }=g_{\alpha }(%
\mathbf{Z})$,$\ $is a collision invariant if 
\begin{equation*}
\Delta _{\alpha \beta }\left( g\right) \,W_{\alpha \beta }(\boldsymbol{\xi },%
\boldsymbol{\xi }_{\ast },I,I_{\ast }\left\vert \boldsymbol{\xi }^{\prime },%
\boldsymbol{\xi }_{\ast }^{\prime },I^{\prime },I_{\ast }^{\prime }\right.
)=0\text{ a.e.}
\end{equation*}%
for all $\left\{ \alpha ,\beta \right\} \subseteq \left\{ 1,...,s\right\} $.
\end{definition}

It is clear that $e_{1},$ $...,e_{s},$ $m\xi _{x},$ $m\xi _{y},$ $m\xi _{z},$
and $m\left\vert \boldsymbol{\xi }\right\vert ^{2}+2\mathbb{I}$, denoting $%
m=\left( m_{1},...,m_{s}\right) $, $\mathbb{I}=(\underset{s_{0}}{\underbrace{%
0,...,0}},\underset{s_{1}=s-s_{0}}{\underbrace{I,...,I}})$, and $\left\{
e_{1},...,e_{s}\right\} $ for the standard basis of $\mathbb{R}^{s}$, are
collision invariants - corresponding to conservation of mass(es), momentum,
and total energy.

In fact, we have the following proposition, cf. \cite{DMS-05, BBBD-18}.

\begin{proposition}
\label{P2}Let $m=\left( m_{1},...,m_{s}\right) $, $\mathbb{I}=(\underset{%
s_{0}}{\underbrace{0,...,0}},\underset{s_{1}}{\underbrace{I,...,I}})$, and $%
\left\{ e_{1},...,e_{s}\right\} $ be the standard basis of $\mathbb{R}^{s}$.
Then the vector space of collision invariants is generated by 
\begin{equation*}
\left\{ e_{1},...,e_{s},m\xi _{x},m\xi _{y},m\xi _{z},\mathbf{m}\left\vert 
\boldsymbol{\xi }\right\vert ^{2}+2\mathbb{I}\right\} .
\end{equation*}
\end{proposition}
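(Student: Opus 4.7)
The plan is to prove the two inclusions separately. For the easy direction, each listed function satisfies $\Delta_{\alpha\beta}(g) = 0$ on the support of $W_{\alpha\beta}$ as an immediate consequence of the conservation laws (\ref{CIM})--(\ref{CIEd}) encoded by $\widehat{\delta}_1\widehat{\delta}_3$: the $e_\gamma$ cancel trivially component-wise (each species mass is individually preserved), the $m\xi_i$ cancel by (\ref{CIM}), and $\mathbf{m}|\boldsymbol{\xi}|^2 + 2\mathbb{I}$ reduces to whichever of (\ref{CIEa})--(\ref{CIEd}) applies depending on whether $\alpha, \beta$ are mono- or polyatomic; the indicator pattern built into $\mathbb{I}$ matches up perfectly with the four energy identities.

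For the converse, I would split the analysis into two stages. In the first stage, I specialize the invariance condition to $\alpha = \beta$, so that each $g_\alpha$ becomes a single-species collision invariant for $Q_{\alpha\alpha}$. By the classical Boltzmann characterization for $\alpha \leq s_0$ and its continuous-internal-energy polyatomic analogue established in \cite{BDLP-94} for $\alpha > s_0$, this forces
\begin{equation*}
g_\alpha(\mathbf{Z}) = A_\alpha + \mathbf{B}_\alpha \cdot m_\alpha \boldsymbol{\xi} + C_\alpha \bigl( m_\alpha|\boldsymbol{\xi}|^2 + 2I\,\mathbf{1}_{\{\alpha > s_0\}} \bigr) \quad \text{a.e.,}
\end{equation*}
for scalars $A_\alpha, C_\alpha \in \mathbb{R}$ and vectors $\mathbf{B}_\alpha \in \mathbb{R}^3$. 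In the second stage, I impose $\Delta_{\alpha\beta}(g) = 0$ for $\alpha \neq \beta$ and substitute this form. Momentum conservation $m_\alpha(\boldsymbol{\xi} - \boldsymbol{\xi}') = -m_\beta(\boldsymbol{\xi}_* - \boldsymbol{\xi}_*')$ collapses the linear part to $m_\beta (\mathbf{B}_\beta - \mathbf{B}_\alpha) \cdot (\boldsymbol{\xi}_* - \boldsymbol{\xi}_*')$; varying the scattering direction $\boldsymbol{\omega} \in S^2$ (and the repartition parameter $R$ when internal energies are present) sweeps $\boldsymbol{\xi}_* - \boldsymbol{\xi}_*'$ over a full three-dimensional cone, yielding $\mathbf{B}_\alpha = \mathbf{B}_\beta =: \mathbf{B}$ for all pairs. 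Total energy conservation similarly collapses the quadratic part to $(C_\alpha - C_\beta)$ times a generically non-vanishing quantity, forcing $C_\alpha = C_\beta =: C$, while no cross constraint arises on the $A_\alpha$. Hence
\begin{equation*}
g = \sum_{\alpha=1}^{s} A_\alpha e_\alpha + \mathbf{B} \cdot (m\boldsymbol{\xi}) + C\bigl(\mathbf{m}|\boldsymbol{\xi}|^2 + 2\mathbb{I}\bigr),
\end{equation*}
closing the span.

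The main obstacle is the first stage: the single-polyatomic-species characterization requires non-trivial measure-theoretic analysis of the Cauchy-type functional equation $g(\boldsymbol{\xi}, I) + g(\boldsymbol{\xi}_*, I_*) = g(\boldsymbol{\xi}', I') + g(\boldsymbol{\xi}_*', I_*')$ on the collision manifold, in order to rule out exotic measurable solutions beyond the classical polynomial ones. This is precisely the content of \cite{BDLP-94}, so it can be invoked as a black box; the cross-species gluing in the second stage is then routine linear algebra, contingent only on verifying that the collision parametrization via $(\boldsymbol{\omega}, R, r)$ leaves enough free directions to separate the linear and quadratic coefficients in every mono/mono, mono/poly, poly/mono, and poly/poly configuration.
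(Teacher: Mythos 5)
The paper offers no proof of Proposition \ref{P2} at all: it is stated as a known result and attributed to \cite{DMS-05} and \cite{BBBD-18}, so there is no internal argument to compare against, and your proposal supplies a proof where the paper supplies a citation. Your route is the standard one and is essentially correct: the forward inclusion is immediate from $(\ref{CIM})$--$(\ref{CIEd})$, and the converse via (i) restriction to $\alpha=\beta$ plus the measurable-solution characterization of the Cauchy-type functional equation (classical for monatomic species, \cite{BDLP-94} for a continuous internal energy), followed by (ii) cross-species gluing, is how the cited references argue. Two points deserve care in a full write-up. First, in stage (ii) the linear and quadratic residuals do not decouple merely by varying $\boldsymbol{\omega}$: moving the scattering direction changes both $\boldsymbol{\xi}_{\ast}-\boldsymbol{\xi}_{\ast}^{\prime}$ and the energy transferred to species $a_{\beta}$, so the single residual identity $m_{\beta}\left( \mathbf{B}_{\beta}-\mathbf{B}_{\alpha}\right) \cdot \left( \boldsymbol{\xi}_{\ast}-\boldsymbol{\xi}_{\ast}^{\prime}\right) +\left( C_{\beta}-C_{\alpha}\right) \Delta E_{\beta}=0$ must be killed by showing that $\left( \boldsymbol{\xi}_{\ast}-\boldsymbol{\xi}_{\ast}^{\prime},\Delta E_{\beta}\right) $ spans $\mathbb{R}^{4}$ as the collision ranges over its admissible parameters; you flag this but do not do it. A painless shortcut is to apply the reflection $\boldsymbol{\xi}\mapsto -\boldsymbol{\xi}$ (and likewise for $\boldsymbol{\xi}_{\ast},\boldsymbol{\xi}^{\prime },\boldsymbol{\xi}_{\ast}^{\prime}$), which preserves the collision manifold and the cross section (these depend only on $\left\vert \mathbf{g}\right\vert $ and $\cos \theta $) while flipping the sign of the linear residual only; adding and subtracting the two resulting identities decouples the constraints. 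Second, the single-species polyatomic step yields only the five invariants $1,\xi_{x},\xi_{y},\xi_{z},m_{\alpha}\left\vert \boldsymbol{\xi}\right\vert ^{2}+2I$ (and not $I$ separately) precisely because $\sigma_{\alpha \alpha}>0$ a.e.\ includes genuinely inelastic collisions; that hypothesis should be stated when invoking \cite{BDLP-94} as a black box, since for purely resonant collisions (cf.\ \cite{BBS-23}) the conclusion would fail. With these caveats your plan closes the span correctly.
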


Define%
\begin{equation*}
\mathcal{W}\left[ f\right] :=\left( Q(f,f),\log \left( \varphi ^{-1}f\right)
\right) ,
\end{equation*}%
where $\varphi =\mathrm{diag}\left( I^{\delta ^{\left( 1\right)
}/2-1},...,I^{\delta ^{\left( s\right) }/2-1}\right) $. It follows by
Proposition $\ref{P1}$ that%
\begin{eqnarray*}
\mathcal{W}\left[ f\right] &=&-\frac{1}{4}\sum_{\alpha ,\beta
=1}^{s}\int\limits_{\left( \mathbb{R}^{3}\times \mathbb{R}_{+}\right)
^{4}}\left( \frac{I^{\delta ^{\left( \alpha \right) }/2-1}I_{\ast }^{\delta
^{\left( \beta \right) }/2-1}f_{\alpha }^{\prime }f_{\beta \ast }^{\prime }}{%
f_{\alpha }f_{\beta \ast }\left( I^{\prime }\right) ^{\delta ^{\left( \alpha
\right) }/2-1}\left( I_{\ast }^{\prime }\right) ^{\delta ^{\left( \beta
\right) }/2-1}}-1\right) \\
&&\times \log \left( \frac{I^{\delta ^{\left( \alpha \right) }/2-1}I_{\ast
}^{\delta ^{\left( \beta \right) }/2-1}f_{\alpha }^{\prime }f_{\beta \ast
}^{\prime }}{f_{\alpha }f_{\beta \ast }\left( I^{\prime }\right) ^{\delta
^{\left( \alpha \right) }/2-1}\left( I_{\ast }^{\prime }\right) ^{\delta
^{\left( \beta \right) }/2-1}}\right) \frac{f_{\alpha }f_{\beta \ast }}{%
I^{\delta ^{\left( \alpha \right) }/2-1}I_{\ast }^{\delta ^{\left( \beta
\right) }/2-1}}\,dA_{\alpha \beta }\text{.}
\end{eqnarray*}%
Since $\left( x-1\right) \mathrm{log}\left( x\right) \geq 0$ for $x>0$, with
equality if and only if $x=1$,%
\begin{equation*}
\mathcal{W}\left[ f\right] \leq 0\text{,}
\end{equation*}%
with equality if and only if for all $\left\{ \alpha ,\beta \right\}
\subseteq \left\{ 1,...,s\right\} $ 
\begin{equation}
\Lambda _{\alpha \beta }(f)W_{\alpha \beta }=0\text{ a.e.,}  \label{m1}
\end{equation}%
or, equivalently, if and only if%
\begin{equation*}
Q(f,f)\equiv 0\text{.}
\end{equation*}

For any equilibrium, or, Maxwellian, distribution $M=(M_{1},...,M_{s})$, it
follows by equation $\left( \ref{m1}\right) $, since $Q(M,M)\equiv 0$, that%
\begin{equation*}
\left( \log \frac{M_{\alpha }}{I^{\delta ^{\left( \alpha \right) }/2-1}}%
+\log \frac{M_{\beta \ast }}{I_{\ast }^{\delta ^{\left( \beta \right) }/2-1}}%
-\log \frac{M_{\alpha }^{\prime }}{\left( I^{\prime }\right) ^{\delta
^{\left( \alpha \right) }/2-1}}-\log \frac{M_{\beta \ast }^{\prime }}{\left(
I_{\ast }^{\prime }\right) ^{\delta ^{\left( \beta \right) }/2-1}}\right)
W_{\alpha \beta }=0\text{ a.e. .}
\end{equation*}%
Hence, $\log \left( \varphi ^{-1}M\right) =\left( \log \dfrac{M_{1}}{%
I^{\delta ^{\left( 1\right) }/2-1}},...,\log \dfrac{M_{s}}{I^{\delta
^{\left( s\right) }/2-1}}\right) $ is a collision invariant, and the
components of the Maxwellian distributions $M=(M_{1},...,M_{s})$ are of the
form 
\begin{equation*}
M_{\alpha }=\left\{ 
\begin{array}{l}
\dfrac{n_{\alpha }m_{\alpha }^{3/2}}{\left( 2\pi k_{B}T\right) ^{3/2}}%
e^{-m_{\alpha }\left\vert \boldsymbol{\xi }-\mathbf{u}\right\vert
^{2}/\left( 2k_{B}T\right) }\text{, }\alpha \in \left\{ 1,...,s_{0}\right\} 
\text{ \ \ } \\ 
\dfrac{n_{\alpha }I^{\delta ^{\left( \alpha \right) }/2-1}m_{\alpha }^{3/2}e^{-\left( m_{\alpha }\left\vert \boldsymbol{\xi }-%
\mathbf{u}\right\vert ^{2}+2I\right) /\left( 2k_{B}T\right) }}{%
\left( 2\pi \right) ^{3/2}\left( k_{B}T\right) ^{\left( \delta ^{\left(
\alpha \right) }+3\right) /2}\Gamma \left(\delta ^{\left( \alpha
\right) }/2\right) }\text{, }\alpha
\in \left\{ s_{0}+1,...,s\right\} \text{,}%
\end{array}%
\right.
\end{equation*}%
where $n_{\alpha }=\left( M,e_{_{\alpha }}\right) $, $\mathbf{u}=\dfrac{1}{%
\rho }\left( M,m\boldsymbol{\xi }\right) $, and $T=\dfrac{1}{3nk_{B}}\left(
M,m\left\vert \boldsymbol{\xi }-\mathbf{u}\right\vert ^{2}\right) $, with $%
m=(m_{1},...,m_{s})$, $n=\sum\limits_{\alpha =1}^{s}n_{\alpha }$, and $\rho
=\sum\limits_{\alpha =1}^{s}m_{\alpha }n_{\alpha }$, while $\Gamma =\Gamma
(n)$ and $k_{B}$ denote the Gamma function $\Gamma (n)=\int_{0}^{\infty
}x^{n-1}e^{-x}\,dx$ and the Boltzmann constant, respectively.

Note that, by equation $\left( \ref{m1}\right) $, any Maxwellian
distribution $M=(M_{1},...,M_{s})$ satisfies the relations 
\begin{equation}
\Lambda _{\alpha \beta }(M)W_{\alpha \beta }=0\text{ a.e.}  \label{M1}
\end{equation}
for any $\left\{ \alpha ,\beta \right\} \subseteq \left\{ 1,...,s\right\} $.

\begin{remark}
Introducing the $\mathcal{H}$-functional%
\begin{equation*}
\mathcal{H}\left[ f\right] =\left( f,\log \left( I^{1-\delta /2}f\right)
\right) \text{,}
\end{equation*}%
an $\mathcal{H}$-theorem can be obtained, cf. \cite{DMS-05, BBBD-18}.
\end{remark}

\subsection{Linearized collision operator\label{S2.3}}

Consider a deviation of a Maxwellian
distribution $M=(M_{1},...,M_{s})$, where $M_{\alpha }=\left\{ 
\begin{array}{l}
\dfrac{n_{\alpha }m_{\alpha }^{3/2}}{\left( 2\pi \right) ^{3/2}}%
e^{-m_{\alpha }\left\vert \boldsymbol{\xi }\right\vert ^{2}/2}\text{, }%
\alpha \in \left\{ 1,...,s_{0}\right\} \text{ \ \ } \\ 
\dfrac{n_{\alpha }I^{\delta ^{\left( \alpha \right) }/2-1}m_{\alpha }^{3/2}}{%
\left( 2\pi \right) ^{3/2}\Gamma \left( \delta ^{\left( \alpha \right)
}/2\right) }e^{-m_{\alpha }\left\vert \boldsymbol{\xi }\right\vert
^{2}/2}e^{-I}\text{, }\alpha \in \left\{ s_{0}+1,...,s\right\} \text{,}%
\end{array}%
\right. $, of the form%
\begin{equation}
f=M+M^{1/2}h\text{.}  \label{s1}
\end{equation}%
Insertion in the Boltzmann equation $\left( \ref{BE1}\right) $ results in
the system%
\begin{equation}
\frac{\partial h}{\partial t}+\left( \boldsymbol{\xi }\cdot \nabla _{\mathbf{%
x}}\right) h+\mathcal{L}h=\Gamma \left( h,h\right) \text{,}  \label{LBE}
\end{equation}%
where the components of the linearized collision operator $\mathcal{L}%
=\left( \mathcal{L}_{1},...,\mathcal{L}_{s}\right) $ are given by%
\begin{eqnarray}
\mathcal{L}_{\alpha }h &=&-M_{\alpha }^{-1/2}\left( Q_{\alpha
}(M,M^{1/2}h)+Q_{\alpha }(M^{1/2}h,M)\right)  \notag \\
&=&\sum\limits_{\beta =1}^{s}\int_{\left( \mathbb{R}^{3}\times \mathbb{R}%
_{+}\right) ^{3}}\frac{\left( M_{\beta \ast }M_{\alpha }^{\prime }M_{\beta
\ast }^{\prime }\right) ^{1/2}}{\left( II^{\prime }\right) ^{\delta ^{\left(
\alpha \right) }/4-1/2}\left( I_{\ast }I_{\ast }^{\prime }\right) ^{\delta
^{\left( \beta \right) }/4-1/2}}\Delta _{\alpha \beta }\left( \frac{h}{%
M^{1/2}}\right)  \notag \\
&&\times W_{\alpha \beta }d\boldsymbol{\xi }_{\ast }d\boldsymbol{\xi }%
^{\prime }d\boldsymbol{\xi }_{\ast }^{\prime }dI_{\ast }dI^{\prime }dI_{\ast
}^{\prime }  \notag \\
&=&\nu _{\alpha }h_{\alpha }-K_{\alpha }\left( h\right) \text{,}
\label{dec2}
\end{eqnarray}%
\ with%
\begin{eqnarray}
\nu _{\alpha } &=&\sum\limits_{\beta =1}^{s}\int_{\left( \mathbb{R}%
^{3}\times \mathbb{R}_{+}\right) ^{3}}\frac{M_{\beta \ast }}{I^{\delta
^{\left( \alpha \right) }/2-1}I_{\ast }^{\delta ^{\left( \beta \right) }/2-1}%
}W_{\alpha \beta }d\boldsymbol{\xi }_{\ast }d\boldsymbol{\xi }^{\prime }d%
\boldsymbol{\xi }_{\ast }^{\prime }dI_{\ast }dI^{\prime }dI_{\ast }^{\prime }%
\text{,}  \notag \\
K_{\alpha } &=&\sum\limits_{\beta =1}^{s}\int_{\left( \mathbb{R}^{3}\times 
\mathbb{R}_{+}\right) ^{3}}\left( \frac{h_{\alpha }^{\prime }}{\left(
M_{\alpha }^{\prime }\right) ^{1/2}}+\frac{h_{\beta \ast }^{\prime }}{\left(
M_{\beta \ast }^{\prime }\right) ^{1/2}}-\frac{h_{\beta \ast }}{M_{\beta
\ast }^{1/2}}\right) W_{\alpha \beta }  \notag \\
&&\times \frac{\left( M_{\beta \ast }M_{\alpha }^{\prime }M_{\beta \ast
}^{\prime }\right) ^{1/2}}{\left( II^{\prime }\right) ^{\delta ^{\left(
\alpha \right) }/4-1/2}\left( I_{\ast }I_{\ast }^{\prime }\right) ^{\delta
^{\left( \beta \right) }/4-1/2}}d\boldsymbol{\xi }_{\ast }d\boldsymbol{\xi }%
^{\prime }d\boldsymbol{\xi }_{\ast }^{\prime }dI_{\ast }dI^{\prime }dI_{\ast
}^{\prime }\text{,}  \label{dec1}
\end{eqnarray}%
while the components of the quadratic term $S=\left( S_{1},...,S_{s}\right) $
are given by%
\begin{equation}
S_{\alpha }\left( h,h\right) =M_{\alpha }^{-1/2}Q_{\alpha
}(M^{1/2}h,M^{1/2}h)\text{.}  \label{nl1}
\end{equation}%
for $\alpha \in \left\{ 1,...,s\right\} $. The multiplication operator $%
\Lambda $ defined by 
\begin{equation*}
\Lambda (f)=\nu f\text{, where }\nu =\mathrm{diag}\left( \nu _{1},...,\nu
_{s}\right) \text{,}
\end{equation*}%
is a closed, densely defined, self-adjoint operator on $\mathcal{\mathfrak{h}%
}$. It is Fredholm, as well, if and only if $\Lambda $ is coercive.

The following lemma follows immediately by Lemma $\ref{L0}$.

\begin{lemma}
\label{L1}For any $\left\{ \alpha ,\beta \right\} \subseteq \left\{
1,...,s\right\} $ the measure 
\begin{equation*}
d\widetilde{A}_{\alpha \beta }=\frac{\left( M_{\alpha }M_{\beta \ast
}M_{\alpha }^{\prime }M_{\beta \ast }^{\prime }\right) ^{1/2}}{\left(
II^{\prime }\right) ^{\delta ^{\left( \alpha \right) }/4-1/2}\left( I_{\ast
}I_{\ast }^{\prime }\right) ^{\delta ^{\left( \beta \right) }/4-1/2}}%
dA_{\alpha \beta }
\end{equation*}%
is invariant under the (ordered) interchange $\left( \ref{tr}\right) $ of
variables, while%
\begin{equation*}
d\widetilde{A}_{\alpha \beta }+d\widetilde{A}_{\beta \alpha }\text{, with }%
\left\{ \alpha ,\beta \right\} \subseteq \left\{ 1,...,s\right\} \text{,}
\end{equation*}%
is invariant under the (ordered) interchange $\left( \ref{tr1}\right) $ of
variables
\end{lemma}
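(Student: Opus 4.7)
The strategy is to write $d\widetilde{A}_{\alpha\beta}$ as the product of the Maxwellian/internal-energy weight
$$w_{\alpha\beta}:=\frac{\left(M_{\alpha}M_{\beta\ast}M_{\alpha}^{\prime}M_{\beta\ast}^{\prime}\right)^{1/2}}{(II^{\prime})^{\delta^{(\alpha)}/4-1/2}(I_{\ast}I_{\ast}^{\prime})^{\delta^{(\beta)}/4-1/2}}$$
and the collision measure $dA_{\alpha\beta}$, and then verify invariance of each factor separately. Since Lemma $\ref{L0}$ already takes care of $dA_{\alpha\beta}$, the whole statement reduces to tracking how $w_{\alpha\beta}$ transforms under the two swaps.

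For the first swap $(\ref{tr})$, the weight $w_{\alpha\beta}$ is manifestly invariant: the Maxwellian numerator merely permutes $M_{\alpha}\leftrightarrow M_{\alpha}^{\prime}$ and $M_{\beta\ast}\leftrightarrow M_{\beta\ast}^{\prime}$, while the denominator contains only the symmetric products $II^{\prime}$ and $I_{\ast}I_{\ast}^{\prime}$. Combining this with Lemma $\ref{L0}$ yields the first claim immediately.

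For the second swap $(\ref{tr1})$, I would show directly that $d\widetilde{A}_{\alpha\beta}$ is converted into $d\widetilde{A}_{\beta\alpha}$, from which invariance of the symmetric sum follows. After the swap, the Maxwellian of species $\alpha$ is evaluated at the starred variables while that of species $\beta$ sits at the unstarred ones, and the exponents $\delta^{(\alpha)}/4-1/2$ and $\delta^{(\beta)}/4-1/2$ become attached to $I_{\ast}I_{\ast}^{\prime}$ and $II^{\prime}$ respectively --- this is exactly the pattern of $w_{\beta\alpha}$. For the underlying measure, the first identity in $(\ref{rel1})$ gives that $W_{\alpha\beta}$ evaluated at the swapped arguments equals $W_{\beta\alpha}$ evaluated at the original arguments, so that $dA_{\alpha\beta}\mapsto dA_{\beta\alpha}$.

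The only real obstacle is notational bookkeeping: carefully tracking which Maxwellian $M$ is attached to which pair $(\boldsymbol{\xi},I)$ after each swap, and invoking the correct line of $(\ref{rel1})$ for the species exchange. No analytic or structural difficulty arises, and the lemma is essentially a rearrangement of Lemma $\ref{L0}$ augmented by the obvious symmetry of the weight factor.
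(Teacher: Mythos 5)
Your proposal is correct and matches the paper's intent: the paper simply asserts that Lemma \ref{L1} ``follows immediately by Lemma \ref{L0}'', and your argument supplies exactly the details that are left implicit, namely the invariance of the weight factor under $\left( \ref{tr}\right) $ and the identity $w_{\alpha \beta }\mapsto w_{\beta \alpha }$ under $\left( \ref{tr1}\right) $ combined with the corresponding statements for $dA_{\alpha \beta }$. No gap; this is the same route, just written out.
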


The weak form of the linearized collision operator $\mathcal{L}$ reads%
\begin{eqnarray*}
\left( \mathcal{L}h,g\right) &=&\sum_{\alpha ,\beta =1}^{s}\int_{\left( 
\mathbb{R}^{3}\times \mathbb{R}_{+}\right) ^{4}}\Delta _{\alpha \beta
}\left( \frac{h}{M^{1/2}}\right) \frac{g_{\alpha }}{M_{\alpha }^{1/2}}\,d%
\widetilde{A}_{\alpha \beta } \\
&=&\sum_{\alpha ,\beta =1}^{s}\int_{\left( \mathbb{R}^{3}\times \mathbb{R}%
_{+}\right) ^{4}}\Delta _{\alpha \beta }\left( \frac{h}{M^{1/2}}\right) 
\frac{g_{\beta \ast }}{M_{\beta \ast }^{1/2}}\,d\widetilde{A}_{\alpha \beta }
\\
&=&-\sum_{\alpha ,\beta =1}^{s}\int_{\left( \mathbb{R}^{3}\times \mathbb{R}%
_{+}\right) ^{4}}\Delta _{\alpha \beta }\left( \frac{h}{M^{1/2}}\right) 
\frac{g_{\alpha }^{\prime }}{\left( M_{\alpha }^{\prime }\right) ^{1/2}}\,d%
\widetilde{A}_{\alpha \beta } \\
&=&-\sum_{\alpha ,\beta =1}^{s}\int_{\left( \mathbb{R}^{3}\times \mathbb{R}%
_{+}\right) ^{4}}\Delta _{\alpha \beta }\left( \frac{h}{M^{1/2}}\right) 
\frac{g_{\beta \ast }^{\prime }}{\left( M_{\beta \ast }^{\prime }\right)
^{1/2}}\,d\widetilde{A}_{\alpha \beta }\text{,}
\end{eqnarray*}%
for any function $g=\left( g_{1},...,g_{s}\right) $, with $g_{\alpha
}=g_{\alpha }(\boldsymbol{\xi },I)$, such that the first integrals are
defined for all $\left\{ \alpha ,\beta \right\} \subseteq \left\{
1,...,s\right\} $, while the following equalities are obtained by applying
Lemma $\ref{L1}$.

We have the following lemma.

\begin{lemma}
\label{L2}Let $g=\left( g_{1},...,g_{s}\right) $, with $g_{\alpha
}=g_{\alpha }(\boldsymbol{\xi },I)$, be such that%
\begin{equation*}
\int_{\left( \mathbb{R}^{3}\times \mathbb{R}_{+}\right) ^{4}}\Delta _{\alpha
\beta }\left( \frac{h}{M^{1/2}}\right) \frac{g_{\alpha }}{M_{\alpha }^{1/2}}%
\,d\widetilde{A}_{\alpha \beta }\text{,}
\end{equation*}%
is defined for any $\left\{ \alpha ,\beta \right\} \subseteq \left\{
1,...,s\right\} $. Then%
\begin{equation*}
\left( \mathcal{L}h,g\right) =\frac{1}{4}\sum_{\alpha ,\beta
=1}^{s}\int_{\left( \mathbb{R}^{3}\times \mathbb{R}_{+}\right) ^{4}}\Delta
_{\alpha \beta }\left( \frac{h}{M^{1/2}}\right) \,\Delta _{\alpha \beta
}\left( \frac{g}{M^{1/2}}\right) \,d\widetilde{A}_{\alpha \beta }.
\end{equation*}
\end{lemma}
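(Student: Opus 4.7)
The plan is to obtain the symmetrized weak form by averaging the four equivalent representations of $(\mathcal{L}h,g)$ that appear in the display just before the lemma statement. Those four identities are direct consequences of Lemma \ref{L1}: the first is the definition of the weak form; the second follows from the $(\ref{tr1})$-invariance of $d\widetilde{A}_{\alpha\beta}+d\widetilde{A}_{\beta\alpha}$ together with a relabeling $\alpha\leftrightarrow\beta$ of the outer summation, which converts the test factor $g_{\alpha}/M_{\alpha}^{1/2}$ into $g_{\beta\ast}/M_{\beta\ast}^{1/2}$ while preserving $\Delta_{\alpha\beta}(h/M^{1/2})$ under the combined move; the third and fourth follow from the $(\ref{tr})$-invariance of $d\widetilde{A}_{\alpha\beta}$, which swaps the unprimed and primed quantities in the test factor and produces the overall minus sign, because $\Delta_{\alpha\beta}(h/M^{1/2})$ is antisymmetric under $(\boldsymbol{\xi},\boldsymbol{\xi}_{\ast},I,I_{\ast})\leftrightarrow(\boldsymbol{\xi}^{\prime},\boldsymbol{\xi}_{\ast}^{\prime},I^{\prime},I_{\ast}^{\prime})$.

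Next, I would add the four expressions and divide by four. Using linearity of integration, the common factor $\Delta_{\alpha\beta}(h/M^{1/2})$ remains, while the four test-function contributions combine into
\begin{equation*}
\frac{g_{\alpha}}{M_{\alpha}^{1/2}}+\frac{g_{\beta\ast}}{M_{\beta\ast}^{1/2}}-\frac{g_{\alpha}^{\prime}}{(M_{\alpha}^{\prime})^{1/2}}-\frac{g_{\beta\ast}^{\prime}}{(M_{\beta\ast}^{\prime})^{1/2}},
\end{equation*}
which by the very definition of $\Delta_{\alpha\beta}$ equals $\Delta_{\alpha\beta}(g/M^{1/2})$. This produces the stated identity.

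There is essentially no obstacle, the proof being a one-line averaging argument once the four preceding identities are in hand. The only technical point worth noting is the justification that all four integrals are well-defined under the stated hypothesis: since each rearrangement is a measure-preserving change of variables with respect to $d\widetilde{A}_{\alpha\beta}$ (or, for the $(\ref{tr1})$-step, with respect to $d\widetilde{A}_{\alpha\beta}+d\widetilde{A}_{\beta\alpha}$ summed over pairs), assumed definedness of the first integral transfers automatically to the other three, so the linearity of integration applies.
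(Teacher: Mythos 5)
Your proposal is correct and follows exactly the route the paper intends: the paper derives the four equivalent representations of $\left( \mathcal{L}h,g\right)$ from Lemma \ref{L1} immediately before stating Lemma \ref{L2}, and the lemma is precisely their average. Your identification of which invariance (the $\left( \ref{tr1}\right)$-invariance of $d\widetilde{A}_{\alpha \beta }+d\widetilde{A}_{\beta \alpha }$ combined with relabeling the summation indices, versus the $\left( \ref{tr}\right)$-invariance of $d\widetilde{A}_{\alpha \beta }$ with the antisymmetry of $\Delta _{\alpha \beta }$) yields each identity, and your remark on transferring well-definedness, are both accurate.
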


\begin{proposition}
\label{P3}The linearized collision operator is symmetric and nonnegative,%
\begin{equation*}
\left( \mathcal{L}h,g\right) =\left( h,\mathcal{L}g\right) \text{ and }%
\left( \mathcal{L}h,h\right) \geq 0\text{,}
\end{equation*}%
and\ the kernel of $\mathcal{L}$, $\ker \mathcal{L}$, is generated by%
\begin{equation*}
\left\{ \mathcal{M}^{1/2}e_{1},...,\mathcal{M}^{1/2}e_{s},\mathcal{M}%
^{1/2}m\xi _{x},\mathcal{M}^{1/2}m\xi _{y},\mathcal{M}^{1/2}m\xi _{z},%
\mathcal{M}^{1/2}\left( m\left\vert \boldsymbol{\xi }\right\vert ^{2}+2%
\mathbb{I}\right) \right\} \text{,}
\end{equation*}%
where $m=\left( m_{1},...,m_{s}\right) $, $\mathbb{I}=(\underset{s_{0}}{%
\underbrace{0,...,0}},\underset{s_{1}}{\underbrace{I,...,I}})$, and $%
\mathcal{M}=\mathrm{diag}\left( M_{1},...,M_{s}\right) $.
\end{proposition}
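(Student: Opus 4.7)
The plan is to read off symmetry and non-negativity directly from Lemma \ref{L2}, and then to reduce the kernel computation to the known span of collision invariants given by Proposition \ref{P2}.

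First I would note that the identity in Lemma \ref{L2},
\begin{equation*}
\left(\mathcal{L}h,g\right)=\frac{1}{4}\sum_{\alpha,\beta=1}^{s}\int_{(\mathbb{R}^{3}\times\mathbb{R}_{+})^{4}}\Delta_{\alpha\beta}\!\left(\frac{h}{M^{1/2}}\right)\Delta_{\alpha\beta}\!\left(\frac{g}{M^{1/2}}\right)d\widetilde{A}_{\alpha\beta},
\end{equation*}
is manifestly symmetric in $h$ and $g$, giving $(\mathcal{L}h,g)=(h,\mathcal{L}g)$. Taking $g=h$ yields $(\mathcal{L}h,h)\geq 0$, since the integrand is pointwise nonnegative (the measure $d\widetilde{A}_{\alpha\beta}$ is nonnegative as $W_{\alpha\beta}\geq 0$).

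For the kernel, I would argue by the standard equivalence: $\mathcal{L}h=0$ clearly implies $(\mathcal{L}h,h)=0$; conversely, if $(\mathcal{L}h,h)=0$ then by the quadratic formula above $\Delta_{\alpha\beta}(h/M^{1/2})=0$ a.e.\ with respect to $d\widetilde{A}_{\alpha\beta}$, equivalently $\Delta_{\alpha\beta}(h/M^{1/2})\,W_{\alpha\beta}=0$ a.e.\ (since $M>0$ everywhere). Feeding this back into Lemma \ref{L2} with an arbitrary test function $g$ gives $(\mathcal{L}h,g)=0$ for all admissible $g$, hence $\mathcal{L}h=0$. Thus $h\in\ker\mathcal{L}$ iff $\phi:=M^{-1/2}h$ satisfies $\Delta_{\alpha\beta}(\phi)\,W_{\alpha\beta}=0$ a.e.\ for all $\{\alpha,\beta\}\subseteq\{1,\ldots,s\}$, i.e.\ iff $\phi$ is a collision invariant in the sense of Section \ref{S2.2}.

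By Proposition \ref{P2}, the space of collision invariants is spanned by $\{e_{1},\ldots,e_{s},m\xi_{x},m\xi_{y},m\xi_{z},m|\boldsymbol{\xi}|^{2}+2\mathbb{I}\}$. Multiplying componentwise by $M^{1/2}$ (which is exactly the action of $\mathcal{M}^{1/2}=\mathrm{diag}(M_{1}^{1/2},\ldots,M_{s}^{1/2})$) then gives the stated generators of $\ker\mathcal{L}$. The only mild point to verify is that these generators actually lie in $\mathcal{\mathfrak{h}}$ so that they indeed belong to the domain of $\mathcal{L}$; this is a direct Gaussian/Gamma-integral computation using the explicit form of $M_{\alpha}$. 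I do not expect any serious obstacle: the core content of the proposition is packaged in Lemma \ref{L2} and Proposition \ref{P2}, and the argument reduces to a clean dichotomy on the vanishing of $\Delta_{\alpha\beta}(h/M^{1/2})$.
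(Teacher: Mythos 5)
Your proposal is correct and follows essentially the same route as the paper: symmetry and nonnegativity read off from Lemma \ref{L2}, the kernel characterized via $(\mathcal{L}h,h)=0\Leftrightarrow\Delta_{\alpha\beta}(h/M^{1/2})W_{\alpha\beta}=0$ a.e., and the conclusion via Proposition \ref{P2}. The extra details you supply (feeding the vanishing back into Lemma \ref{L2} with a test function, and checking the generators lie in $\mathfrak{h}$) are sound elaborations of steps the paper leaves implicit.
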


\begin{proof}
By Lemma $\ref{L2}$, it is immediate that $\left( \mathcal{L}h,g\right)
=\left( h,\mathcal{L}g\right) $, and%
\begin{equation*}
\left( \mathcal{L}h,h\right) =\frac{1}{4}\sum_{\alpha ,\beta
=1}^{s}\int_{\left( \mathbb{R}^{3}\times \mathbb{R}_{+}\right) ^{4}}\left(
\Delta _{\alpha \beta }\left( \frac{h}{M^{1/2}}\right) \right) ^{2}d%
\widetilde{A}_{\alpha \beta }.
\end{equation*}%
Furthermore, $h\in \ker \mathcal{L}$ if and only if $\left( \mathcal{L}%
h,h\right) =0$, which will be fulfilled\ if and only if for all $\left\{
\alpha ,\beta \right\} \subseteq \left\{ 1,...,s\right\} $%
\begin{equation*}
\Delta _{\alpha \beta }\left( \frac{h}{M^{1/2}}\right) \,W_{\alpha \beta }(%
\boldsymbol{\xi },\boldsymbol{\xi }_{\ast },I,I_{\ast }\left\vert 
\boldsymbol{\xi }^{\prime },\boldsymbol{\xi }_{\ast }^{\prime },I^{\prime
},I_{\ast }^{\prime }\right. )=0\text{ a.e. ,}
\end{equation*}%
i.e., if and only if $\mathcal{M}^{-1/2}h$ is a collision invariant. The
last part of the lemma now follows by Proposition $\ref{P2}.$
\end{proof}

\begin{remark}
Note also that the quadratic term is orthogonal to the kernel of $\mathcal{L}
$, i.e. $S\left( h,h\right) \in \left( \ker \mathcal{L}\right) ^{\perp _{%
\mathcal{\mathfrak{h}}}}$.
\end{remark}

\section{Main Results\label{S3}}

This section is devoted to the main results, concerning compact properties
in Theorems \ref{Thm1} and bounds of collision frequencies in Theorems \ref%
{Thm2}.

Assume that for some positive number $\gamma $, such that $0<\gamma <1$,
there is for all $\left\{ \alpha ,\beta \right\} \subseteq \left\{
1,...,s\right\} $ a bound 
\begin{eqnarray}
0\leq \sigma _{\alpha \beta }\left( \left\vert \mathbf{g}\right\vert ,\cos
\theta ,I,I_{\ast },I^{\prime },I_{\ast }^{\prime }\right) \leq C\frac{\Psi
_{\alpha \beta }+\left( \Psi _{\alpha \beta }\right) ^{\gamma /2}}{%
\left\vert \mathbf{g}\right\vert ^{2}}\Upsilon _{\alpha \beta }\text{, where}
\notag \\
\Upsilon _{\alpha \beta }=\frac{\left( I^{\prime }\right) ^{\delta
^{\left( \alpha \right) }/2-1}\left( I_{\ast }^{\prime }\right) ^{\delta
^{\left( \beta \right) }/2-1}}{\mathcal{E}_{\alpha \beta }^{\delta ^{\left(
\alpha \right) }/2}\left( \mathcal{E}_{\alpha \beta }^{\ast }\right)
^{\delta ^{\left( \beta \right) }/2}}\text{ and }\Psi _{\alpha \beta
}=\left\vert \mathbf{g}\right\vert \sqrt{\left\vert \mathbf{g}\right\vert
^{2}-\frac{2\Delta I}{\mu_{\alpha \beta }}}\text{,}
\label{est1}
\end{eqnarray}%
for $\mu_{\alpha \beta }\left\vert \mathbf{g}\right\vert ^{2}>2\Delta I$, on the scattering cross sections, or,
equivalently, the bound 
\begin{equation*}
0\leq B_{i\alpha \beta }\left( \left\vert \mathbf{g}\right\vert ,\cos \theta
,I,I_{\ast },I^{\prime },I_{\ast }^{\prime }\right) \leq CE_{\alpha \beta
}^{1/2}\left( 1+\frac{1}{\Psi _{\alpha \beta }^{1-\gamma /2}}\right)
\end{equation*}%
for $\mu_{\alpha \beta }\left\vert \mathbf{g}\right\vert ^{2}>2\Delta I$, on the collision kernels.

Here and below $\mathcal{E}_{\alpha \beta }=\mathcal{E}_{\beta \alpha
}^{\ast }=1$ if $\alpha \in \left\{ 1,...,s_{0}\right\} $, if $\alpha \in \left\{ s_{0}+1,...,s\right\} $%
 and $\beta \in \left\{ 1,...,s_{0}\right\} $, then $\mathcal{E}%
_{\beta \alpha }^{\ast }=\mu_{\alpha \beta }\left\vert \mathbf{g}\right\vert ^{2}/2+I_{\ast }=\mu_{\alpha \beta }\left\vert 
\mathbf{g}^{\prime }\right\vert ^{2}/2+I_{\ast }^{\prime }$ and $\mathcal{E}%
_{\alpha \beta }=\mu_{\alpha \beta }\left\vert \mathbf{g}\right\vert ^{2}/2+I=\mu_{\alpha \beta }\left\vert \mathbf{g}^{\prime
}\right\vert ^{2}/2+I^{\prime }$, and  if $%
\left\{ \alpha ,\beta \right\} \subset \left\{ s_{0}+1,...,s\right\} $, then $\mathcal{E}%
_{\alpha \beta }=\mathcal{E}_{\alpha \beta }^{\ast }=\mu_{\alpha \beta }\left\vert \mathbf{g}%
\right\vert ^{2}/2+I+I_{\ast }=\mu_{\alpha \beta }\left\vert \mathbf{g}^{\prime }\right\vert
^{2}/2+I^{\prime }+I_{\ast }^{\prime }$.

Then the following result may be obtained.

\begin{theorem}
\label{Thm1}Assume that for all $\left\{ \alpha ,\beta \right\} \subseteq
\left\{ 1,...,s\right\} $ the scattering cross sections $\sigma _{\alpha
\beta }$ satisfy the bound $\left( \ref{est1}\right) $ for some positive
number $\gamma $, $0<\gamma <1$. 

Then the operator $K=\left(
K_{1},...,K_{s}\right) $, with the components $K_{\alpha }$ given by $\left( %
\ref{dec1}\right) $ is a self-adjoint compact operator on $\mathcal{%
\mathfrak{h}}=\left( L^{2}\left( d\boldsymbol{\xi \,}\right) \right)
^{s_{0}}\times \left( L^{2}\left( d\boldsymbol{\xi \,}dI\right) \right)
^{s_{1}}$.
\end{theorem}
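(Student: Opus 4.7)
The plan is to handle self-adjointness and compactness separately. Self-adjointness will follow once compactness is established: by Proposition \ref{P3} the operator $\mathcal{L} = \Lambda - K$ is symmetric, and the multiplication operator $\Lambda$ is self-adjoint, so $K$ is symmetric; once $K$ is shown compact it is bounded and everywhere defined, and a bounded everywhere-defined symmetric operator on a Hilbert space is automatically self-adjoint. The substantive task is therefore to prove compactness of each $K_\alpha$.

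For compactness I would split each component as $K_\alpha = K_\alpha^{(1)} + K_\alpha^{(2)} - K_\alpha^{(3)}$ according to the three summands in the integrand of (\ref{dec1}) (the terms containing $h'_\alpha$, $h'_{\beta\ast}$ and $h_{\beta\ast}$, respectively), and treat the four cases mono/mono, mono/poly, poly/mono and poly/poly from Section \ref{S2.1} separately. For each piece I would use the delta-function factorisation of $W_{\alpha\beta}$ together with the changes of variables recorded in Section \ref{S2.1} (center-of-mass and relative velocities, the internal-energy parameters $R$ and $r$, and the $\boldsymbol{\omega}$-parametrisation of post-collisional velocities) to integrate out the delta functions and rewrite $K_\alpha^{(i)} h$ in standard integral operator form with an explicit kernel $k^{(i)}_{\alpha\beta}(\mathbf{Z}, \mathbf{Z}_\ast)$; for $K_\alpha^{(1)}$ a further change of variables is needed so that the argument of $h$ becomes the integration variable.

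It then remains to estimate the kernels. For the loss-type piece $K_\alpha^{(3)}$, the bound (\ref{est1}) combined with the Gaussian decay of $(M_{\beta\ast} M_\alpha' M_{\beta\ast}')^{1/2}$ and the constraint $\gamma < 1$ (which controls the prefactor $\Psi_{\alpha\beta}^{\gamma/2 - 1}$) yields square-integrability of $k^{(3)}_{\alpha\beta}$ directly, showing that $K_\alpha^{(3)}$ is Hilbert-Schmidt. For the gain-type pieces $K_\alpha^{(1)}$ and $K_\alpha^{(2)}$ the kernels are not a priori in $L^2$, because of the singularity at $\mu_{\alpha\beta} |\mathbf{g}|^2 = 2\Delta I$ and because of the behaviour at infinity, so I would approximate them by truncated kernels (cutting off $\Psi_{\alpha\beta} \geq \varepsilon$ and restricting $(\boldsymbol{\xi}, I, \boldsymbol{\xi}_\ast, I_\ast)$ to bounded sets), which are Hilbert-Schmidt, and bound the operator-norm error via (\ref{est1}) and the Gaussian factors; by Lemma \ref{LGD} the uniform operator-norm limit is then compact. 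The chief technical obstacle is to produce explicit pointwise bounds of the form $|k^{(i)}_{\alpha\beta}(\mathbf{Z}, \mathbf{Z}_\ast)| \leq C\, p(\mathbf{Z}, \mathbf{Z}_\ast)\, e^{-c(|\boldsymbol{\xi}|^2 + |\boldsymbol{\xi}_\ast|^2 + I + I_\ast)}$ with polynomial weight $p$, especially in the poly/poly case, where the Jacobian from (\ref{df1}) together with the integrations over $R$, $r$, $\boldsymbol{\omega}$ and the implicit dependence of the post-collisional arguments on the integration variables requires careful bookkeeping, extending the single-species analysis of \cite{Be-23b}.
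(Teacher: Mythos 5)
Your overall strategy coincides with the paper's: rewrite each $K_{\alpha}$ as an integral operator whose kernel has $\mathbf{Z}_{\ast}$ as the argument of $h$, show the loss-type kernel is Hilbert--Schmidt outright, and handle the gain-type kernels by truncation and Lemma \ref{LGD}. Your self-adjointness argument (symmetry of $\mathcal{L}$ from Proposition \ref{P3} plus self-adjointness of $\Lambda$, then boundedness) is a legitimate variant of the paper's appeal to the kernel symmetries $\left(\ref{sa1}\right)$--$\left(\ref{sa2}\right)$. One structural point you miss: after the relabelings the relevant classification of the gain terms is not which of $h_{\alpha}^{\prime }$, $h_{\beta \ast}^{\prime }$ appears, but whether the resulting kernel pairs $\mathbf{Z}$ with a same-species or a different-species $\mathbf{Z}_{\ast}$. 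In the different-species case with $m_{\alpha}\neq m_{\beta}$ the paper shows the kernel is genuinely Hilbert--Schmidt (no truncation needed), and only the same-species case (and the case $m_{\alpha}=m_{\beta}$, which reduces to it) requires the approximation machinery of Lemma \ref{LGD}.

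The genuine gap is that the proposal defers precisely the part that constitutes the proof, and the pointwise bound you hope to establish is false for the kernel that actually needs Lemma \ref{LGD}. For the same-species gain kernel $k_{\alpha \beta }^{\left( \alpha \right) }(\boldsymbol{\xi },\boldsymbol{\xi }_{\ast },I,I_{\ast })$ there is no bound of the form $C\,p(\mathbf{Z},\mathbf{Z}_{\ast })\,e^{-c\left( \left\vert \boldsymbol{\xi }\right\vert ^{2}+\left\vert \boldsymbol{\xi }_{\ast }\right\vert ^{2}+I+I_{\ast }\right) }$ with polynomial $p$: the Maxwellian factors sit at the post-collisional variables, and what survives after integrating them out is Gaussian decay only in $\left\vert \mathbf{g}\right\vert$ and in the combination $\left\vert \mathbf{g}\right\vert -2\left\vert \boldsymbol{\xi }\right\vert \cos \varphi +2\chi _{\alpha }$, together with merely negative powers of $I$ and $I_{\ast }$ (cf. the bounds $\left(\ref{b2a}\right)$ and $\left(\ref{b7}\right)$ in the paper). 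In particular there is no exponential decay in $I+I_{\ast }$ at all. Consequently conditions (i) and (iii) of Lemma \ref{LGD} cannot be read off from a pointwise estimate plus ``Gaussian factors''; they require the integrated bounds obtained by the change of variables $\varphi \rightarrow \Phi$, the interpolation in the exponents $\kappa$ and $\varpi$ to tame the $I,I_{\ast }\rightarrow 0$ singularities, the splitting of the domain into $I_{\ast }\gtrless \left\vert \boldsymbol{\xi }\right\vert$, and the resulting decay rate $\left( 1+\log \left( 1+\left\vert \boldsymbol{\xi }\right\vert \right) \right) /\left\vert \boldsymbol{\xi }\right\vert$ of $\left(\ref{b11}\right)$, none of which is sketched in your plan. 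Similarly, for the different-species gain kernel the square-integrability hinges on the coercivity inequality $\left(\ref{ineq1}\right)$ in the variables $\left( \mathbf{g},\mathbf{g}_{\alpha \beta }\right)$ and on the sign of $A_{1}$, i.e., on $m_{\alpha }\neq m_{\beta }$ --- a mechanism absent from your outline. As it stands the proposal is a correct roadmap but not a proof.
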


The proof of Theorem \ref{Thm1} will be addressed in Section $\ref{PT1}$.

\begin{corollary}
\label{Cor1}The linearized collision operator $\mathcal{L}$, with scattering
cross sections satisfying $\left( \ref{est1}\right) $, is a closed, densely
defined, self-adjoint operator on $\mathcal{\mathfrak{h}}$.
\end{corollary}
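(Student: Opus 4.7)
The plan is to write $\mathcal{L} = \Lambda - K$ using the decomposition $(\ref{dec2})$ and then reduce the corollary to an abstract perturbation argument combined with Theorem $\ref{Thm1}$. The discussion immediately after $(\ref{nl1})$ already records that the multiplication operator $\Lambda$, with maximal domain $\mathcal{D}(\Lambda)=\{f\in\mathfrak{h}:\nu f\in\mathfrak{h}\}$, is closed, densely defined, and self-adjoint on $\mathfrak{h}$. Moreover, the hypothesis $(\ref{est1})$ in the corollary is exactly the hypothesis of Theorem $\ref{Thm1}$, which supplies that $K$ is self-adjoint and compact on $\mathfrak{h}$; in particular $K$ is everywhere defined and bounded.

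With these two ingredients in place, I would invoke the standard result that adding an everywhere defined, bounded, self-adjoint operator to a closed, densely defined, self-adjoint operator yields an operator with the same three properties on the same domain. Concretely, I would set $\mathcal{D}(\mathcal{L}):=\mathcal{D}(\Lambda)$, which is dense; closedness of $\mathcal{L}$ would follow from closedness of $\Lambda$ by rewriting $\Lambda f_n=\mathcal{L} f_n+Kf_n$ and using continuity of $K$; symmetry would be inherited directly from $\Lambda$ and $K$. To upgrade symmetry to self-adjointness I would appeal to the identification $\mathcal{D}(\mathcal{L}^{\ast})=\mathcal{D}((\Lambda-K)^{\ast})=\mathcal{D}(\Lambda^{\ast})=\mathcal{D}(\Lambda)$, where the middle equality is the elementary fact that bounded self-adjoint perturbations do not alter the adjoint's domain (i.e., the trivial case of the Kato-Rellich theorem with relative bound $0$).

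The only substantive input is Theorem $\ref{Thm1}$, and indeed the main obstacle is concentrated entirely in the compactness of $K$ (which is deferred to Section $\ref{PT1}$). Once that is granted, the corollary follows in a single line from abstract operator theory with no further calculation.
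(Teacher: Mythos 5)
Your proposal is correct and follows essentially the same route as the paper: decompose $\mathcal{L}=\Lambda-K$, use the stated properties of the multiplication operator $\Lambda$, obtain boundedness and self-adjointness of $K$ from Theorem \ref{Thm1}, and conclude by the standard stability of closedness, dense definedness, and self-adjointness under bounded self-adjoint perturbations (the paper cites Theorem 4.3 of Chapter V in \cite{Kato}, which is the same fact you invoke via the trivial case of Kato--Rellich). No gaps.
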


\begin{proof}
By Theorem \ref{Thm1}, the linear operator $\mathcal{L}=\Lambda -K$ is
closed as the sum of a closed and a bounded operator, and densely defined,
since the domains of the linear operators $\mathcal{L}$ and $\Lambda $ are
equal; $D(\mathcal{L})=D(\Lambda )$. Furthermore, it is a self-adjoint
operator, since the set of self-adjoint operators is closed under addition
of bounded self-adjoint operators, see Theorem 4.3 of Chapter V in \cite%
{Kato}.
\end{proof}

Now consider the scattering cross sections%
\begin{equation}
\sigma _{\alpha \beta }=C_{\alpha \beta }\dfrac{\sqrt{\left\vert \mathbf{g}%
\right\vert ^{2}-\dfrac{2\Delta I}{\mu_{\alpha \beta }}}}{\left\vert \mathbf{g%
}\right\vert E_{\alpha \beta }^{\eta /2}}\frac{\left( I^{\prime }\right)
^{\delta ^{\left( \alpha \right) }/2-1}\left( I_{\ast }^{\prime }\right)
^{\delta ^{\left( \beta \right) }/2-1}}{\mathcal{E}_{\alpha \beta }^{\delta
^{\left( \alpha \right) }/2}\left( \mathcal{E}_{\alpha \beta }^{\ast
}\right) ^{\delta ^{\left( \beta \right) }/2}}\text{, if }\mu_{\alpha \beta }\left\vert \mathbf{%
g}\right\vert ^{2}>2\Delta I\text{,}  \label{e1}
\end{equation}%
for some positive constants $C_{\alpha
\beta }>0$ for all $\left\{ \alpha ,\beta \right\} \subseteq \left\{
1,...,s\right\} $ and nonnegative number $\eta $ less than $1$, $0\leq \eta
<1,$ - cf. hard sphere models for $\eta =0$.

In fact, it would be enough with the bounds (for $\mu_{\alpha \beta }\left\vert \mathbf{%
g}\right\vert ^{2}>2\Delta I$)
\begin{eqnarray}
C_{-}\dfrac{\sqrt{\left\vert \mathbf{g}\right\vert ^{2}-\dfrac{2\Delta I}{\mu_{\alpha \beta }}}}{\left\vert \mathbf{g}\right\vert E_{\alpha \beta
}^{\eta /2}}\Upsilon _{\alpha \beta }\leq \sigma _{\alpha \beta }\leq C_{+}%
\dfrac{\sqrt{\left\vert \mathbf{g}\right\vert ^{2}-\dfrac{2\Delta I}{\mu_{\alpha \beta }}}}{\left\vert \mathbf{g}\right\vert E_{\alpha \beta }^{\eta
/2}}\Upsilon _{\alpha \beta }\text{,}  \notag \\
\text{with }\Upsilon _{\alpha \beta }=\frac{\left( I^{\prime }\right)
^{\delta ^{\left( \alpha \right) }/2-1}\left( I_{\ast }^{\prime }\right)
^{\delta ^{\left( \beta \right) }/2-1}}{\mathcal{E}_{\alpha \beta }^{\delta
^{\left( \alpha \right) }/2}\left( \mathcal{E}_{\alpha \beta }^{\ast
}\right) ^{\delta ^{\left( \beta \right) }/2}}\text{ and }\widetilde{\Delta }%
_{\alpha \beta }I=\frac{m_{\alpha }+m_{\beta }}{m_{\alpha }m_{\beta }}\Delta
I\text{,}  \label{ie1}
\end{eqnarray}%
for some nonnegative number $\eta $ less than $1$, $0\leq \eta <1$, and some
positive constants $C_{\pm }>0$, on the scattering cross sections - cf. hard
potential with cut-off models.

The following bounds restricted to single species\ were obtained in \cite%
{Be-23b,DL-23}. In \cite{DL-23} the improved - compared to the one in \cite%
{Be-23b} - upper bound below was shown for single species.

\begin{theorem}
\label{Thm2} The linearized collision operator $\mathcal{L}$, with
scattering cross section $\left( \ref{e1}\right) $ (or $\left( \ref{ie1}%
\right) $), can be split into a positive multiplication operator $\Lambda $,
where $\Lambda \left( f\right) =\nu f$, with $\nu =\nu (\left\vert 
\boldsymbol{\xi }\right\vert )$, minus a compact operator $K$ on $\mathcal{%
\mathfrak{h}}$, such that there exist positive numbers $\nu _{-}$ and $\nu
_{+}$, $0<\nu _{-}<\nu _{+}$, such that for any $\alpha \in \left\{
1,...,s\right\} $ 
\begin{equation}
\nu _{-}\left( 1+\left\vert \boldsymbol{\xi }\right\vert +\sqrt{I}\right)
^{1-\eta }\leq \nu _{\alpha }\leq \nu _{+}\left( 1+\left\vert \boldsymbol{%
\xi }\right\vert +\sqrt{I}\right) ^{1-\eta }\text{.}  \label{ine1}
\end{equation}
\end{theorem}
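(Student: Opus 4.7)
The plan is to compute the collision frequency
$$\nu_\alpha = \sum_{\beta=1}^s \nu_{\alpha\beta}, \qquad \nu_{\alpha\beta} = \int \frac{M_{\beta\ast}}{I^{\delta^{(\alpha)}/2-1}I_\ast^{\delta^{(\beta)}/2-1}} W_{\alpha\beta}\, d\boldsymbol{\xi}_\ast d\boldsymbol{\xi}^\prime d\boldsymbol{\xi}_\ast^\prime dI_\ast dI^\prime dI_\ast^\prime,$$
by exploiting the very same change of variables that was used to reduce $Q_{\alpha\beta}(f,f)$ to its $(\boldsymbol{\xi}_\ast, \boldsymbol\omega, R, r, I_\ast)$-form in Section \ref{S2.1}. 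Splitting the sum according to the four cases (mono/mono, mono/poly, poly/mono, poly/poly) and applying those reductions removes the delta functions and leaves $\nu_{\alpha\beta}$ as an integral against $M_{\beta\ast}$ of the corresponding kernel $B_{i\alpha\beta}$ (with suitable prefactors in $R,(1-R),r,(1-r),I_\ast^{\delta^{(\beta)}/2-1}$), over $\boldsymbol\omega \in S^2$, $R\in[0,1]$, $r\in[0,1]$ and the $(\boldsymbol{\xi}_\ast,I_\ast)$-phase space of species $\beta$.

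Next I would insert the bound (\ref{ie1}) for $\sigma_{\alpha\beta}$. The factor $\Upsilon_{\alpha\beta}$ exactly cancels the $(I')^{\delta^{(\alpha)}/2-1}(I_\ast')^{\delta^{(\beta)}/2-1}$ weights produced by the reduction, and what is left of the $E_{\alpha\beta}^{\eta/2}$ denominator combines with $|\mathbf g|$ and the Jacobian to give, after carrying out the $\boldsymbol\omega, R, r$ integrations (which reduce to integrals of $R^{1/2}(1-R)^{p}r^{q}(1-r)^{q'}$ of beta-function type, hence finite), an estimate of the form
$$c_-\int_{\mathbb R^3 \times \mathbb R_+} \frac{|\mathbf g|}{E_{\alpha\beta}^{\eta/2}} M_{\beta\ast}\, d\boldsymbol{\xi}_\ast dI_\ast \;\leq\; \nu_{\alpha\beta} \;\leq\; c_+ \int_{\mathbb R^3 \times \mathbb R_+} \frac{|\mathbf g|}{E_{\alpha\beta}^{\eta/2}} M_{\beta\ast}\, d\boldsymbol{\xi}_\ast dI_\ast,$$
with the monatomic cases following by the convention of Remark \ref{R1}. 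Note in particular that $\nu_\alpha$ depends on $(\boldsymbol\xi,I)$ only through $|\boldsymbol\xi|$ and $I$ by rotational invariance and the structure of $M_{\beta\ast}$.

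For the upper bound I would now use $|\mathbf g|\le |\boldsymbol\xi|+|\boldsymbol{\xi}_\ast|$ and $E_{\alpha\beta}\ge c(|\mathbf g|^2+I+I_\ast)$, so that
$$\frac{|\mathbf g|}{E_{\alpha\beta}^{\eta/2}}\;\le\; C(1+|\boldsymbol\xi|+|\boldsymbol{\xi}_\ast|+\sqrt I+\sqrt{I_\ast})^{1-\eta}.$$
Since the Gaussian/exponential decay of $M_{\beta\ast}$ makes all polynomial moments in $|\boldsymbol{\xi}_\ast|$ and $\sqrt{I_\ast}$ finite, the $(\boldsymbol{\xi}_\ast,I_\ast)$-integral produces the desired factor $\nu_+(1+|\boldsymbol\xi|+\sqrt I)^{1-\eta}$. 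For the lower bound I would, as in the single species case of \cite{Be-23b}, restrict the integration to a well-chosen bounded set, e.g. $|\boldsymbol{\xi}_\ast|\le 1$ and $I_\ast \le 1$ with $\boldsymbol\xi\cdot\boldsymbol{\xi}_\ast\le 0$, on which $M_{\beta\ast}$ is bounded below by a positive constant, $|\mathbf g|\ge |\boldsymbol\xi|$, and $E_{\alpha\beta}\le C(1+|\boldsymbol\xi|^2+I)$. This immediately gives $\nu_\alpha\ge \nu_-(1+|\boldsymbol\xi|+\sqrt I)^{1-\eta}$ (for $|\boldsymbol\xi|+\sqrt I$ large, and trivially for it bounded by continuity and positivity).

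The main obstacle is the bookkeeping in step two: making sure that, case by case, the inverse powers of $R$, $1-R$, $r$, $1-r$ and of $I,I_\ast,I',I_\ast'$ produced by the transition probability, by the weight $I^{1-\delta^{(\alpha)}/2}I_\ast^{1-\delta^{(\beta)}/2}$ in the definition of $\nu_\alpha$, by the Jacobian (\ref{df1}), and by the factor $\Upsilon_{\alpha\beta}$ in the cross-section bound, conspire to cancel exactly so that the residual angular-type integrals over $[0,1]^2\times S^2$ are \emph{finite}. This cancellation is what makes the $\Upsilon_{\alpha\beta}$ in the hypothesis (\ref{ie1}) the natural one, and it has to be checked independently in each of the four cases, with the poly/poly case being the most involved since both $R$ and $r$ are present.
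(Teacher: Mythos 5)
Your overall strategy --- reduce $\nu_{\alpha\beta}$ by the change of variables $(\ref{df1})$, insert the two-sided bound $(\ref{ie1})$, and then estimate Maxwellian moments --- is the same as the paper's, but there is a genuine error in the reduced kernel you arrive at, and it breaks the lower bound. After the delta functions are integrated out, the factor surviving from $(\ref{e1})$/$(\ref{ie1})$ in the numerator is the \emph{post}-collisional relative speed $\left\vert \mathbf{g}^{\prime }\right\vert =\sqrt{\left\vert \mathbf{g}\right\vert ^{2}-2\Delta I/\mu _{\alpha \beta }}$, not $\left\vert \mathbf{g}\right\vert $; since $\Delta I=I^{\prime }+I_{\ast }^{\prime }-I-I_{\ast }$ can be very negative, its average against $\Upsilon _{\alpha \beta }$ over $(I^{\prime },I_{\ast }^{\prime })$ is comparable to $E_{\alpha \beta }^{1/2}\asymp \left\vert \mathbf{g}\right\vert +\sqrt{I}+\sqrt{I_{\ast }}$, not to $\left\vert \mathbf{g}\right\vert $. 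The correct intermediate estimate is therefore
\begin{equation*}
\nu _{\alpha \beta }\asymp \int_{\mathbb{R}^{3}\times \mathbb{R}_{+}}E_{\alpha \beta }^{\left( 1-\eta \right) /2}M_{\beta \ast }\,d\boldsymbol{\xi }_{\ast }dI_{\ast }\text{,}
\end{equation*}
whereas your displayed two-sided bound with kernel $\left\vert \mathbf{g}\right\vert /E_{\alpha \beta }^{\eta /2}$ is false in the upper-bound direction as soon as a polyatomic species is involved (take $\boldsymbol{\xi }=0$ and $I\rightarrow \infty $: the left side grows like $I^{(1-\eta )/2}$ while your right side stays bounded or decays).

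The consequence is fatal for your lower bound: on your restricted set you only use $\left\vert \mathbf{g}\right\vert \geq \left\vert \boldsymbol{\xi }\right\vert $, which yields at best $\nu _{\alpha }\gtrsim \left\vert \boldsymbol{\xi }\right\vert ^{1-\eta }$ and cannot produce the $\sqrt{I}$ growth required by $(\ref{ine1})$; for $\left\vert \boldsymbol{\xi }\right\vert $ bounded and $I\rightarrow \infty $ your estimate stays bounded while $\nu _{-}(1+\left\vert \boldsymbol{\xi }\right\vert +\sqrt{I})^{1-\eta }$ diverges, and the ``continuity and positivity'' fallback only covers compact sets of $(\left\vert \boldsymbol{\xi }\right\vert ,I)$. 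The paper gets the $I$-dependence precisely from the discarded factor: keeping $\sqrt{\left\vert \mathbf{g}\right\vert ^{2}-2\widetilde{\Delta }_{\alpha \beta }I}=\sqrt{(2/\mu _{\alpha \beta })(E_{\alpha \alpha }-I^{\prime }-I_{\ast }^{\prime })}$ and restricting the $(I^{\prime },I_{\ast }^{\prime })$-integration to $I^{\prime }+I_{\ast }^{\prime }\leq E_{\alpha \alpha }/2$ gives a factor $E_{\alpha \alpha }^{(1-\eta )/2}\geq c\left( \left\vert \mathbf{g}\right\vert ^{2}+I\right) ^{(1-\eta )/2}$, after which your splitting of the $\boldsymbol{\xi }_{\ast }$-domain does work. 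Your upper-bound step survives with the corrected kernel, since $E_{\alpha \beta }^{(1-\eta )/2}\leq C(1+\left\vert \boldsymbol{\xi }\right\vert +\left\vert \boldsymbol{\xi }_{\ast }\right\vert +\sqrt{I}+\sqrt{I_{\ast }})^{1-\eta }$ and all moments of $M_{\beta \ast }$ are finite; the bookkeeping of the $R,r$ powers that you flag as the main obstacle is indeed routine and is not where the difficulty lies.
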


The decomposition follows by decomposition $\left( \ref{dec2}\right) ,\left( %
\ref{dec1}\right) $ and Theorem $\ref{Thm1}$, while the bounds $\left( \ref%
{ine1}\right) $ on the collision frequency will be proven in Section $\ref%
{PT2}$.

\begin{corollary}
\label{Cor2}The linearized collision operator $\mathcal{L}$, with scattering
cross section $\left( \ref{e1}\right) $ (or $\left( \ref{ie1}\right) $), is
a Fredholm operator with domain%
\begin{equation*}
D(\mathcal{L})=\left( L^{2}\left( \left( 1+\left\vert \boldsymbol{\xi }%
\right\vert \right) ^{1-\eta }d\boldsymbol{\xi \,}\right) \right)
^{s_{0}}\times \left( L^{2}\left( \left( 1+\left\vert \boldsymbol{\xi }%
\right\vert +\sqrt{I}\right) ^{1-\eta }d\boldsymbol{\xi \,}dI\right) \right)
^{s_{1}}\text{.}
\end{equation*}
\end{corollary}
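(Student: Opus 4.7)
The corollary is essentially an assembly of Theorems \ref{Thm1} and \ref{Thm2} together with two standard functional-analytic facts: that the set of Fredholm operators on a Hilbert space is stable under compact perturbations, and that the domain of a closed operator $A-B$ equals that of $A$ whenever $B$ is everywhere-defined and bounded. The plan is to verify the Fredholm hypotheses for $\Lambda$ directly from the bounds on $\nu$, add $K$ as a compact perturbation, and finally read off the domain from those same bounds.

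First I would check that the scattering cross sections in $\left( \ref{e1}\right) $ (and, more generally, any satisfying $\left( \ref{ie1}\right) $) fulfill the hypothesis $\left( \ref{est1}\right) $ of Theorem \ref{Thm1}, for instance by choosing any $\gamma \in (0,1-\eta)$ and absorbing constants; this yields the self-adjointness and compactness of $K=(K_1,\dots,K_s)$ on $\mathfrak{h}$. Second, I would invoke the lower bound of $\left( \ref{ine1}\right) $ in Theorem \ref{Thm2}, which, since $1-\eta>0$, implies $\nu _{\alpha }(\boldsymbol{\xi },I)\geq \nu _{-}>0$. Hence the self-adjoint multiplication operator $\Lambda $ is bounded below away from zero; it is therefore coercive and, as already recorded at the end of Section \ref{S2.3}, Fredholm (in fact boundedly invertible).

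Third, Fredholmness of $\mathcal{L}=\Lambda -K$ then follows at once from the compact perturbation stability of the Fredholm class, already highlighted in the introduction. Finally, since $K$ is bounded on all of $\mathfrak{h}$ by Theorem \ref{Thm1}, we have $D(\mathcal{L})=D(\Lambda )$, and the latter is read off from the two-sided bound $\left( \ref{ine1}\right) $ on $\nu _{\alpha }$: the condition $\nu f\in \mathfrak{h}$ reduces component by component to membership in the weighted $L^{2}$ space written in the statement, with the monatomic blocks $I$-independent in accordance with Remark \ref{R1}.

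No substantive obstacle arises at this stage — the entire analytic work is front-loaded into Theorems \ref{Thm1} and \ref{Thm2}, whose proofs are deferred to Sections \ref{PT1} and \ref{PT2}. The only mild bookkeeping item is to keep track of the exact power of the weight coming out of the $\nu_{\alpha}$-bound, but this follows mechanically from $\left( \ref{ine1}\right) $.
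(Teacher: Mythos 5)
Your proposal is correct and follows essentially the same route as the paper: coercivity of $\Lambda$ from the lower bound in Theorem \ref{Thm2}, Fredholmness of $\Lambda$, stability of the Fredholm class under compact perturbations (the paper cites Theorem 5.26 of Chapter IV in \cite{Kato}), and $D(\mathcal{L})=D(\Lambda)$ read off from the two-sided bound $\left(\ref{ine1}\right)$ since $K$ is bounded. Your additional explicit check that $\left(\ref{e1}\right)$/$\left(\ref{ie1}\right)$ imply $\left(\ref{est1}\right)$ is a useful piece of bookkeeping that the paper leaves implicit, but it does not change the argument.
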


\begin{proof}
By Theorem \ref{Thm2} the multiplication operator $\Lambda $ is coercive,
and thus it is a Fredholm operator. Furthermore, the set of Fredholm
operators is closed under addition of compact operators, see Theorem 5.26 of
Chapter IV in \cite{Kato} and its proof, so, by Theorem \ref{Thm2}, $%
\mathcal{L}$ is a Fredholm operator.
\end{proof}

We stress that Corollary $\ref{Cor2}$ finally yields the Fredholmness of the
linearized operator assumed in \cite{BBBD-18} for kernels of the form $%
\left( \ref{e1}\right) $ or $\left( \ref{ie1}\right) $.

For hard sphere like models we obtain the following result.

\begin{corollary}
\label{Cor3}For the linearized collision operator $\mathcal{L}$, with
scattering cross section $\left( \ref{e1}\right) $ (or $\left( \ref{ie1}%
\right) $) where $\eta =0$, there exists a positive number $\lambda $, $%
0<\lambda <1$, such that 
\begin{equation*}
\left( h,\mathcal{L}h\right) \geq \lambda \left( h,\nu (\left\vert 
\boldsymbol{\xi }\right\vert ,I)h\right) \geq \lambda \nu _{-}\left(
h,\left( 1+\left\vert \boldsymbol{\xi }\right\vert \right) h\right)
\end{equation*}%
for all $h\in D\left( \mathcal{L}\right) \cap \mathrm{Im}\mathcal{L}$.
\end{corollary}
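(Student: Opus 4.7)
The plan is to combine the spectral setup from Theorems \ref{Thm1} and \ref{Thm2} with a standard weak-compactness contradiction argument. Since $\eta=0$, Theorem \ref{Thm2} gives $\nu_{-}(1+|\boldsymbol{\xi}|+\sqrt{I})\leq \nu_{\alpha}\leq \nu_{+}(1+|\boldsymbol{\xi}|+\sqrt{I})$, so in particular $\nu\geq\nu_{-}>0$ on all of $\mathfrak{h}$; the second inequality of the corollary is then immediate from $\nu_{\alpha}\geq\nu_{-}(1+|\boldsymbol{\xi}|)$, and it only remains to establish the coercivity bound $(h,\mathcal{L}h)\geq \lambda(h,\nu h)$ on $D(\mathcal{L})\cap(\ker\mathcal{L})^{\perp}=D(\mathcal{L})\cap\mathrm{Im}\,\mathcal{L}$ (the last equality uses self-adjointness from Corollary \ref{Cor1}).

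First, I would reduce the inequality to showing that $\mu:=\inf\{(\mathcal{L}h,h)/(\nu h,h):h\in D(\mathcal{L})\cap(\ker\mathcal{L})^{\perp},\, h\neq 0\}$ is strictly positive; any $\lambda\in(0,\mu]$ then works, and $\lambda<1$ is automatic because $(\mathcal{L}h,h)=(\nu h,h)-(Kh,h)$ and $K\not\equiv 0$. Argue by contradiction: assume $\mu=0$ and pick a sequence $h_{n}\in D(\mathcal{L})\cap(\ker\mathcal{L})^{\perp}$ with $(\nu h_{n},h_{n})=1$ and $(\mathcal{L}h_{n},h_{n})\to 0$. Since $\nu\geq\nu_{-}$, the sequence is bounded in $\mathfrak{h}$, so, along a subsequence, $h_{n}\rightharpoonup h$ weakly in $\mathfrak{h}$.

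Now I would use the two ingredients supplied by Theorems \ref{Thm1} and \ref{Thm2}. By Theorem \ref{Thm1}, $K$ is compact on $\mathfrak{h}$, hence $Kh_{n}\to Kh$ strongly in $\mathfrak{h}$, which (weak $\times$ strong) yields $(Kh_{n},h_{n})\to(Kh,h)$. For the multiplication part, the sequence $\nu^{1/2}h_{n}$ is bounded in $\mathfrak{h}$, so along a further subsequence $\nu^{1/2}h_{n}\rightharpoonup g$ in $\mathfrak{h}$; testing against $\nu^{1/2}\varphi$ for compactly supported $\varphi$ and comparing with $h_{n}\rightharpoonup h$ identifies $g=\nu^{1/2}h$, and weak lower semicontinuity of the norm gives $(\nu h,h)\leq\liminf(\nu h_{n},h_{n})=1$. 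Combining the two limits, $(\mathcal{L}h,h)=(\nu h,h)-(Kh,h)\leq 1-\lim(Kh_{n},h_{n})=1-(1-0)=0$. By Proposition \ref{P3}, $\mathcal{L}\geq 0$, so $(\mathcal{L}h,h)=0$ and $h\in\ker\mathcal{L}$. On the other hand, $\ker\mathcal{L}$ is finite-dimensional (Fredholmness, Corollary \ref{Cor2}) and hence weakly closed, so $h\in(\ker\mathcal{L})^{\perp}$ too, whence $h=0$. But then $(Kh,h)=0$, whereas the identity above forces $(Kh,h)=1$, a contradiction. Therefore $\mu>0$, completing the proof.

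The main obstacle is the passage to the limit in $(\nu h_{n},h_{n})$: weak convergence in $\mathfrak{h}$ does not a priori give weak convergence in the $\nu$-weighted space, so the identification of $g=\nu^{1/2}h$ via test functions (which uses that $\nu$ is locally integrable, hence $\nu^{1/2}\varphi\in\mathfrak{h}$ for $\varphi$ compactly supported and bounded) is the delicate point. Everything else is routine: compactness of $K$ handles the integral part, nonnegativity of $\mathcal{L}$ forces the weak limit into $\ker\mathcal{L}$, and finite-dimensionality of $\ker\mathcal{L}$ forces it out again.
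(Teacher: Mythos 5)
Your proof is correct, but it takes a genuinely different route from the paper's. The paper's argument is a short, direct convex-combination trick: since $\mathcal{L}$ is a nonnegative self-adjoint Fredholm operator, its restriction to $(\ker \mathcal{L})^{\perp}$ has a spectral gap, $(h,\mathcal{L}h)\geq \nu_{0}(h,h)$, and since $K$ is compact it is bounded, $(h,Kh)\leq c_{K}(h,h)$; writing $(h,\mathcal{L}h)=(1-\lambda )(h,\mathcal{L}h)+\lambda (h,(\nu -K)h)$ with $\lambda =\nu _{0}/(\nu _{0}+c_{K})$ makes the $(h,h)$-terms cancel exactly and leaves $\lambda (h,\nu h)$. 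You instead establish strict positivity of the infimum $\mu $ by contradiction, using weak compactness of a normalized minimizing sequence, complete continuity of $K$ to pass to the limit in $(Kh_{n},h_{n})$, and weak lower semicontinuity of the $\nu $-weighted norm. Both are valid; the paper's proof is shorter and yields a $\lambda $ expressed in terms of the spectral gap and the norm of $K$, whereas yours does not need to invoke the spectral gap at all — it uses only coercivity of $\nu $, compactness of $K$, nonnegativity of $\mathcal{L}$, and closedness of $(\ker \mathcal{L})^{\perp}$, at the price of a nonconstructive $\lambda $. Two small points to tidy up: (i) the weak limit $h$ is only known to lie in the form domain $D(\nu ^{1/2})$, not in $D(\mathcal{L})=D(\nu )$, so the identity $(\mathcal{L}h,h)=(\nu h,h)-(Kh,h)$ and the implication $(\mathcal{L}h,h)=0\Rightarrow h\in \ker \mathcal{L}$ should be phrased in terms of the closed quadratic form, i.e. $\left\Vert \mathcal{L}^{1/2}h\right\Vert ^{2}$; (ii) what needs to be weakly closed is $(\ker \mathcal{L})^{\perp }$, which is a closed subspace and hence weakly closed irrespective of the dimension of $\ker \mathcal{L}$, so the appeal to finite-dimensionality via Fredholmness is unnecessary. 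Neither point is a genuine gap.
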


\begin{proof}
Let $h\in D\left( \mathcal{L}\right) \cap \left( \mathrm{ker}\mathcal{L}%
\right) ^{\perp }=D\left( \mathcal{L}\right) \cap \mathrm{Im}\mathcal{L}$.
As a Fredholm operator, $\mathcal{L}$ is closed with a closed range, and as
a compact operator, $K$ is bounded, and so there are positive constants $\nu
_{0}>0$ and $c_{K}>0$, such that%
\begin{equation*}
(h,\mathcal{L}h)\geq \nu _{0}(h,h)\text{ and }(h,Kh)\leq c_{K}(h,h).
\end{equation*}%
Let $\lambda =\dfrac{\nu _{0}}{\nu _{0}+c_{K}}$. Then the corollary follows,
since 
\begin{eqnarray*}
(h,\mathcal{L}h) &=&(1-\lambda )(h,\mathcal{L}h)+\lambda (h,(\nu (|%
\boldsymbol{\xi }|,I)-K)h) \\
&\geq &(1-\lambda )\nu _{0}(h,h)+\lambda (h,\nu (|\boldsymbol{\xi }%
|,I)h)-\lambda c_{K}(h,h) \\
&=&(\nu _{0}-\lambda (\nu _{0}+c_{K}))(h,h)+\lambda (h,\nu (|\boldsymbol{\xi 
}|,I)h) \\
&=&\lambda (h,\nu (|\boldsymbol{\xi }|,I)h)\text{.}
\end{eqnarray*}
\end{proof}

\begin{remark}
By Proposition $\ref{P3}$ and Corollary $\ref{Cor1}-\ref{Cor3}$ the
linearized operator $\mathcal{L}$ fulfills the properties assumed on the
linear operators in \cite{Be-23d}, and hence, the results therein can be
applied to hard sphere like models.
\end{remark}

\section{Compactness \label{PT1}}

This section concerns the proof of Theorem \ref{Thm1}. Note that in the
proof the kernels are rewritten in such a way that $\mathbf{Z}_{\ast }$ -
and not $\mathbf{Z}^{\prime }$ or $\mathbf{Z}_{\ast }^{\prime }$ - always
will be arguments of the distribution functions. As for single species,
either $\mathbf{Z}_{\ast }$ is an argument in the loss term (like $\mathbf{Z}
$) or in the gain term (unlike $\mathbf{Z}$) of the collision operator.
However,\ in the latter case, unlike for single species, for mixtures one
have to differ between two different cases; either $\mathbf{Z}_{\ast }$ are
associated to the same species as $\mathbf{Z}$, or not. The kernels of the
terms from the loss part of the collision operator will be shown\ to be
Hilbert-Schmidt in a quite direct way. The kernels of - some of - the terms
- for which $\mathbf{Z}_{\ast }$ is associated to the same species as $%
\mathbf{Z}$ - from the gain parts of the collision operators will be shown
to be uniform limits of Hilbert-Schmidt integral operators, i.e.,
approximately Hilbert-Schmidt in the sense of Lemma \ref{LGD}. Furthermore,
it will be shown that the kernels of the remaining terms - i.e. for which $%
\mathbf{Z}_{\ast }$ are associated to the opposite species to $\mathbf{Z}$ -
from the gain parts of the collision operators, are Hilbert-Schmidt.

To show the compactness properties we will apply the following result.
Denote, for any (nonzero) natural number $N$,%
\begin{align*}
\mathfrak{h}_{N}& :=\left\{ (\mathbf{Z},\mathbf{Z}_{\ast })\in \mathbb{%
Y\times Y}_{\ast }:\left\vert \boldsymbol{\xi }-\boldsymbol{\xi }_{\ast
}\right\vert \geq \frac{1}{N}\text{; }\left\vert \boldsymbol{\xi }%
\right\vert \leq N\right\} \text{, and} \\
b^{(N)}& =b^{(N)}(\mathbf{Z},\mathbf{Z}_{\ast }):=b(\mathbf{Z},\mathbf{Z}%
_{\ast })\mathbf{1}_{\mathfrak{h}_{N}}\text{.}
\end{align*}%
Here, either $\mathbf{Z}=\boldsymbol{\xi }$ and $\mathbb{Y=R}^{3}$, or, $%
\mathbf{Z}=\left( \boldsymbol{\xi },I\right) $ and $\mathbb{Y=R}^{3}\times 
\mathbb{R}_{+}$, and correspondingly, either $\mathbf{Z}=\boldsymbol{\xi }%
_{\ast }$ and $\mathbb{Y_{\ast }=R}^{3}$, or, $\mathbf{Z}_{\ast }=\left( 
\boldsymbol{\xi }_{\ast },I_{\ast }\right) $ and $\mathbb{Y_{\ast }=R}%
^{3}\times \mathbb{R}_{+}$. Then we have the following lemma, cf Glassey 
\cite[Lemma 3.5.1]{Glassey} and Drange \cite{Dr-75}.

\begin{lemma}
\label{LGD} Assume that $Tf\left( \mathbf{Z}\right) =\int_{\mathbb{Y}_{\ast
}}b(\mathbf{Z},\mathbf{Z}_{\ast })f\left( \mathbf{Z}_{\ast }\right) \,d%
\mathbf{Z}_{\ast }$, with $b(\mathbf{Z},\mathbf{Z}_{\ast })\geq 0$. Then $T$
is compact on $L^{2}\left( d\mathbf{Z}\right) $ if

(i) $\int_{\mathbb{Y}}b(\mathbf{Z},\mathbf{Z}_{\ast })\,d\mathbf{Z}$ is
bounded in $\mathbf{Z}_{\ast }$;

(ii) $b^{(N)}\in L^{2}\left( d\mathbf{Z}\boldsymbol{\,}d\mathbf{Z}_{\ast
}\right) $ for any (nonzero) natural number $N$;

(iii) $\underset{\mathbf{Z}\in \mathbb{Y}}{\sup }\int_{\mathbb{Y}_{\ast }}b(%
\mathbf{Z},\mathbf{Z}_{\ast })-b^{(N)}(\mathbf{Z},\mathbf{Z}_{\ast })\,d%
\mathbf{Z}_{\ast }\rightarrow 0$ as $N\rightarrow \infty $.
\end{lemma}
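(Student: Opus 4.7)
The plan is to approximate $T$ in the operator-norm topology by Hilbert--Schmidt operators $T_N$ with kernels $b^{(N)}$, and then invoke the classical fact that the set of compact operators on a Hilbert space is closed in the norm topology of $\mathcal{B}(L^2)$. The three hypotheses are tailored exactly so that (ii) makes each approximant compact, while (i) and (iii) together deliver the norm convergence $T_N\to T$.

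First, for each fixed $N$, hypothesis (ii) gives $b^{(N)}\in L^{2}(d\mathbf{Z}\,d\mathbf{Z}_{\ast})$, so the integral operator $T_N f(\mathbf{Z})=\int_{\mathbb{Y}_{\ast}}b^{(N)}(\mathbf{Z},\mathbf{Z}_{\ast})f(\mathbf{Z}_{\ast})\,d\mathbf{Z}_{\ast}$ is Hilbert--Schmidt, with Hilbert--Schmidt norm equal to $\|b^{(N)}\|_{L^{2}}$, and therefore compact on $L^{2}(d\mathbf{Z})$.

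Next, I would estimate the operator norm of the remainder $R_N:=T-T_N$, whose kernel is $r_N:=b-b^{(N)}=b\,\mathbf{1}_{\mathfrak{h}_N^{c}}\geq 0$. The natural tool is a Schur-type test for non-negative kernels: splitting $r_N = r_N^{1/2}\cdot r_N^{1/2}$ inside $\langle R_N f,g\rangle$ and applying Cauchy--Schwarz gives
\[
\|R_N\|_{\mathcal{B}(L^{2})}\;\leq\;\sqrt{A_N\,B_N}\,,
\]
where $A_N:=\sup_{\mathbf{Z}}\int_{\mathbb{Y}_{\ast}}r_N\,d\mathbf{Z}_{\ast}$ and $B_N:=\sup_{\mathbf{Z}_{\ast}}\int_{\mathbb{Y}}r_N\,d\mathbf{Z}$. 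Hypothesis (iii) says precisely that $A_N\to 0$, while $0\leq r_N\leq b$ combined with (i) yields $B_N\leq \sup_{\mathbf{Z}_{\ast}}\int_{\mathbb{Y}}b\,d\mathbf{Z}<\infty$ uniformly in $N$. Therefore $\|R_N\|\to 0$, which means $T$ is a uniform limit of compact operators and hence compact.

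The one point requiring attention, rather than a genuine obstacle, is the asymmetric way the three hypotheses feed into Schur's test: the decay is assumed only in the $\mathbf{Z}_{\ast}$-variable (condition (iii) controls $A_N$), and one needs (i) as the complementary uniform-in-$N$ bound in the $\mathbf{Z}$-variable to close the estimate; (ii) is independent of these and serves only to guarantee compactness of each individual approximant. Once this split is set up, nothing beyond Cauchy--Schwarz and the closedness of the compact ideal is required.
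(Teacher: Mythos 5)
Your proof is correct: the decomposition into a Hilbert--Schmidt approximant $T_N$ plus a remainder controlled by the Schur test (with (iii) giving the vanishing row bound and (i) the uniform column bound), followed by closedness of the compact operators under norm limits, is exactly the standard argument. The paper does not reproduce a proof but refers to Glassey's Lemma 3.5.1, and your argument is essentially that one.
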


Then $T$ is the uniform limit of Hilbert-Schmidt integral operators \cite[%
Lemma 3.5.1]{Glassey} and we say that the kernel $b(\mathbf{Z},\mathbf{Z}%
_{\ast })$ is approximately Hilbert-Schmidt, while $T$ is an approximately
Hilbert-Schmidt integral operator. The reader is referred to Glassey \cite[%
Lemma 3.5.1]{Glassey} for a proof of Lemma \ref{LGD}.

Now we turn to the proof of Theorem \ref{Thm1}. Note that throughout the
proof $C$ will denote a generic positive constant.

\begin{proof}
For $\alpha \in \left\{ 1,...,s\right\} $ rewrite expression $\left( \ref%
{dec1}\right) $ as%
\begin{eqnarray*}
K_{\alpha } &=&\left( M_{\alpha }\right) ^{-1/2}\sum\limits_{\beta
=1}^{s}\int_{\left( \mathbb{R}^{3}\times \mathbb{R}_{+}\right)
^{3}}w_{\alpha \beta }(\boldsymbol{\xi },\boldsymbol{\xi }_{\ast },I,I_{\ast
}\left\vert \boldsymbol{\xi }^{\prime },\boldsymbol{\xi }_{\ast }^{\prime
},I^{\prime },I_{\ast }^{\prime }\right. ) \\
&&\times \left( \frac{h_{\alpha }^{\prime }}{\left( M_{\alpha }^{\prime
}\right) ^{1/2}}+\frac{h_{\beta \ast }^{\prime }}{\left( M_{\beta \ast
}^{\prime }\right) ^{1/2}}-\frac{h_{\beta \ast }}{M_{\beta \ast }^{1/2}}%
\right) \,d\boldsymbol{\xi }_{\ast }d\boldsymbol{\xi }^{\prime }d\boldsymbol{%
\xi }_{\ast }^{\prime }dI_{\ast }dI^{\prime }dI_{\ast }^{\prime }\text{,}
\end{eqnarray*}%
with 
\begin{eqnarray*}
&&w_{\alpha \beta }(\boldsymbol{\xi },\boldsymbol{\xi }_{\ast },I,I_{\ast
}\left\vert \boldsymbol{\xi }^{\prime },\boldsymbol{\xi }_{\ast }^{\prime
},I^{\prime },I_{\ast }^{\prime }\right. ) \\
&=&\frac{\left( M_{\alpha }M_{\beta \ast }M_{\alpha }^{\prime }M_{\beta \ast
}^{\prime }\right) ^{1/2}}{\left( II^{\prime }\right) ^{\delta ^{\left(
\alpha \right) }/4-1/2}\left( I_{\ast }I_{\ast }^{\prime }\right) ^{\delta
^{\left( \beta \right) }/4-1/2}}W_{\alpha \beta }(\boldsymbol{\xi },%
\boldsymbol{\xi }_{\ast },I,I_{\ast }\left\vert \boldsymbol{\xi }^{\prime },%
\boldsymbol{\xi }_{\ast }^{\prime },I^{\prime },I_{\ast }^{\prime }\right. )%
\text{.}
\end{eqnarray*}%
Due to relations $\left( \ref{rel1}\right) $, the relations%
\begin{eqnarray}
w_{\alpha \beta }(\boldsymbol{\xi },\boldsymbol{\xi }_{\ast },I,I_{\ast
}\left\vert \boldsymbol{\xi }^{\prime },\boldsymbol{\xi }_{\ast }^{\prime
},I^{\prime },I_{\ast }^{\prime }\right. ) &=&w_{\beta \alpha }(\boldsymbol{%
\xi }_{\ast },\boldsymbol{\xi },I_{\ast },I\left\vert \boldsymbol{\xi }%
_{\ast }^{\prime },\boldsymbol{\xi }^{\prime },I_{\ast }^{\prime },I^{\prime
}\right. )  \notag \\
w_{\alpha \beta }(\boldsymbol{\xi },\boldsymbol{\xi }_{\ast },I,I_{\ast
}\left\vert \boldsymbol{\xi }^{\prime },\boldsymbol{\xi }_{\ast }^{\prime
},I^{\prime },I_{\ast }^{\prime }\right. ) &=&w_{\alpha \beta }(\boldsymbol{%
\xi }^{\prime },\boldsymbol{\xi }_{\ast }^{\prime },I^{\prime },I_{\ast
}^{\prime }\left\vert \boldsymbol{\xi },\boldsymbol{\xi }_{\ast },I,I_{\ast
}\right. )  \notag \\
w_{\alpha \alpha }(\boldsymbol{\xi },\boldsymbol{\xi }_{\ast },I,I_{\ast
}\left\vert \boldsymbol{\xi }^{\prime },\boldsymbol{\xi }_{\ast }^{\prime
},I^{\prime },I_{\ast }^{\prime }\right. ) &=&w_{\alpha \alpha }(\boldsymbol{%
\xi },\boldsymbol{\xi }_{\ast },I,I_{\ast }\left\vert \boldsymbol{\xi }%
_{\ast }^{\prime },\boldsymbol{\xi }^{\prime },I_{\ast }^{\prime },I^{\prime
}\right. )  \label{rel2}
\end{eqnarray}%
are satisfied.

By renaming $\left\{ \boldsymbol{\xi }_{\ast },I_{\ast }\right\}
\leftrightarrows \left\{ \boldsymbol{\xi }_{\ast }^{\prime },I_{\ast
}^{\prime }\right\} $,%
\begin{eqnarray*}
&&\int_{\left( \mathbb{R}^{3}\times \mathbb{R}_{+}\right) ^{3}}w_{\alpha
\beta }(\boldsymbol{\xi },\boldsymbol{\xi }_{\ast },I,I_{\ast }\left\vert 
\boldsymbol{\xi }^{\prime },\boldsymbol{\xi }_{\ast }^{\prime },I^{\prime
},I_{\ast }^{\prime }\right. )\,\frac{h_{\beta \ast }^{\prime }}{\left(
M_{\beta \ast }^{\prime }\right) ^{1/2}}\,d\boldsymbol{\xi }_{\ast }d%
\boldsymbol{\xi }^{\prime }d\boldsymbol{\xi }_{\ast }^{\prime }dI_{\ast
}dI^{\prime }dI_{\ast }^{\prime } \\
&=&\int_{\left( \mathbb{R}^{3}\times \mathbb{R}_{+}\right) ^{3}}w_{\alpha
\beta }(\boldsymbol{\xi },\boldsymbol{\xi }_{\ast }^{\prime },I,I_{\ast
}^{\prime }\left\vert \boldsymbol{\xi }^{\prime },\boldsymbol{\xi }_{\ast
},I^{\prime },I_{\ast }\right. )\,\frac{h_{\beta \ast }}{M_{\beta \ast
}^{1/2}}\,\,d\boldsymbol{\xi }_{\ast }d\boldsymbol{\xi }^{\prime }d%
\boldsymbol{\xi }_{\ast }^{\prime }dI_{\ast }dI^{\prime }dI_{\ast }^{\prime }%
\text{.}
\end{eqnarray*}%
Moreover, by renaming $\left\{ \boldsymbol{\xi }_{\ast }\right\}
\leftrightarrows \left\{ \boldsymbol{\xi }^{\prime }\right\} $, 
\begin{eqnarray*}
&&\int_{\left( \mathbb{R}^{3}\times \mathbb{R}_{+}\right) ^{3}}w_{\alpha
\beta }(\boldsymbol{\xi },\boldsymbol{\xi }_{\ast },I,I_{\ast }\left\vert 
\boldsymbol{\xi }^{\prime },\boldsymbol{\xi }_{\ast }^{\prime },I^{\prime
},I_{\ast }^{\prime }\right. )\,\frac{h_{\alpha ,k}^{\prime }}{\left(
M_{\alpha }^{\prime }\right) ^{1/2}}\,d\boldsymbol{\xi }_{\ast }d\boldsymbol{%
\xi }^{\prime }d\boldsymbol{\xi }_{\ast }^{\prime }dI_{\ast }dI^{\prime
}dI_{\ast }^{\prime } \\
&=&\int_{\left( \mathbb{R}^{3}\times \mathbb{R}_{+}\right) ^{3}}w_{\alpha
\beta }(\boldsymbol{\xi },\boldsymbol{\xi }^{\prime },I,I^{\prime
}\left\vert \boldsymbol{\xi }_{\ast },\boldsymbol{\xi }_{\ast }^{\prime
},I_{\ast },I_{\ast }^{\prime }\right. )\,\frac{h_{\alpha \ast }}{M_{\alpha
\ast }^{1/2}}\,d\boldsymbol{\xi }_{\ast }d\boldsymbol{\xi }^{\prime }d%
\boldsymbol{\xi }_{\ast }^{\prime }dI_{\ast }dI^{\prime }dI_{\ast }^{\prime }%
\text{.}
\end{eqnarray*}%
It follows that%
\begin{eqnarray}
K_{\alpha }\left( h\right) &=&\int_{\mathbb{R}^{3}\times \mathbb{R}%
_{+}}k_{\alpha \beta }\left( \boldsymbol{\xi },\boldsymbol{\xi }_{\ast
},I,I_{\ast }\right) \,h_{\ast }\,d\boldsymbol{\xi }_{\ast }\text{, where } 
\notag \\
k_{\alpha \beta }h_{\ast } &=&k_{\alpha \beta }^{\left( \alpha \right)
}h_{\alpha \ast }+k_{\alpha \beta }^{\left( \beta \right) }h_{\beta \ast
}=k_{\alpha \beta }^{\left( \alpha \right) }h_{\alpha \ast }+\left(
k_{\alpha \beta 2}^{\left( \beta \right) }-k_{\alpha \beta 1}^{\left( \beta
\right) }\right) h_{\beta \ast }\text{, with}  \notag \\
k_{\alpha \beta }^{\left( \alpha \right) }(\boldsymbol{\xi },\boldsymbol{\xi 
}_{\ast },I,I_{\ast }) &=&\int_{\left( \mathbb{R}^{3}\times \mathbb{R}%
_{+}\right) ^{2}}\frac{w_{\alpha \beta }(\boldsymbol{\xi },\boldsymbol{\xi }%
^{\prime },I,I^{\prime }\left\vert \boldsymbol{\xi }_{\ast },\boldsymbol{\xi 
}_{\ast }^{\prime },I_{\ast },I_{\ast }^{\prime }\right. )}{\left( M_{\alpha
}M_{\alpha \ast }\right) ^{1/2}}\,d\boldsymbol{\xi }^{\prime }d\boldsymbol{%
\xi }_{\ast }^{\prime }dI^{\prime }dI_{\ast }^{\prime }\text{,}  \notag \\
k_{\alpha \beta 1}^{\left( \beta \right) }(\boldsymbol{\xi },\boldsymbol{\xi 
}_{\ast },I,I_{\ast }) &=&\int_{\left( \mathbb{R}^{3}\times \mathbb{R}%
_{+}\right) ^{2}}\frac{w_{\alpha \beta }(\boldsymbol{\xi },\boldsymbol{\xi }%
_{\ast },I,I_{\ast }\left\vert \boldsymbol{\xi }^{\prime },\boldsymbol{\xi }%
_{\ast }^{\prime },I^{\prime },I_{\ast }^{\prime }\right. )}{\left(
M_{\alpha }M_{\beta \ast }\right) ^{1/2}}d\boldsymbol{\xi }^{\prime }d%
\boldsymbol{\xi }_{\ast }^{\prime }dI^{\prime }dI_{\ast }^{\prime }\text{,
and}  \notag \\
k_{\alpha \beta 2}^{\left( \beta \right) }(\boldsymbol{\xi },\boldsymbol{\xi 
}_{\ast },I,I_{\ast }) &=&\int_{\left( \mathbb{R}^{3}\times \mathbb{R}%
_{+}\right) ^{2}}\frac{w_{\alpha \beta }(\boldsymbol{\xi },\boldsymbol{\xi }%
_{\ast }^{\prime },I,I_{\ast }^{\prime }\left\vert \boldsymbol{\xi }^{\prime
},\boldsymbol{\xi }_{\ast },I^{\prime },I_{\ast }\right. )}{\left( M_{\alpha
}M_{\beta \ast }\right) ^{1/2}}\,d\boldsymbol{\xi }^{\prime }d\boldsymbol{%
\xi }_{\ast }^{\prime }dI^{\prime }dI_{\ast }^{\prime }\text{.}  \label{k1}
\end{eqnarray}

By applying second relation of $\left( \ref{rel2}\right) $ and renaming $%
\left\{ \boldsymbol{\xi }^{\prime },I^{\prime }\right\} \leftrightarrows
\left\{ \boldsymbol{\xi }_{\ast }^{\prime },I_{\ast }^{\prime }\right\} $,%
\begin{eqnarray}
k_{\alpha \beta }^{\left( \alpha \right) }(\boldsymbol{\xi },\boldsymbol{\xi 
}_{\ast },I,I_{\ast }) &=&\int_{\left( \mathbb{R}^{3}\times \mathbb{R}%
_{+}\right) ^{2}}\frac{w_{\alpha \beta }(\boldsymbol{\xi },\boldsymbol{\xi }%
^{\prime },I,I^{\prime }\left\vert \boldsymbol{\xi }_{\ast },\boldsymbol{\xi 
}_{\ast }^{\prime },I_{\ast },I_{\ast }^{\prime }\right. )}{\left( M_{\alpha
}M_{\alpha \ast }\right) ^{1/2}}\,d\boldsymbol{\xi }^{\prime }d\boldsymbol{%
\xi }_{\ast }^{\prime }dI^{\prime }dI_{\ast }^{\prime }  \notag \\
&=&\int_{\left( \mathbb{R}^{3}\times \mathbb{R}_{+}\right) ^{2}}\frac{%
w_{\alpha \beta }(\boldsymbol{\xi }_{\ast },\boldsymbol{\xi }^{\prime
},I_{\ast },I^{\prime }\left\vert \boldsymbol{\xi },\boldsymbol{\xi }_{\ast
}^{\prime },I,I_{\ast }^{\prime }\right. )}{\left( M_{\alpha }M_{\alpha \ast
}\right) ^{1/2}}\,d\boldsymbol{\xi }^{\prime }d\boldsymbol{\xi }_{\ast
}^{\prime }dI^{\prime }dI_{\ast }^{\prime }  \notag \\
&=&k_{\alpha \beta }^{\left( \alpha \right) }(\boldsymbol{\xi }_{\ast },%
\boldsymbol{\xi })\text{.}  \label{sa1}
\end{eqnarray}%
Moreover,%
\begin{equation}
k_{\alpha \beta }^{\left( \beta \right) }(\boldsymbol{\xi },\boldsymbol{\xi }%
_{\ast },I,I_{\ast })=k_{\beta \alpha 1}^{\left( \alpha \right) }(%
\boldsymbol{\xi }_{\ast },\boldsymbol{\xi },I_{\ast },I)-k_{\beta \alpha
2}^{\left( \alpha \right) }(\boldsymbol{\xi }_{\ast },\boldsymbol{\xi }%
,I_{\ast },I)=k_{\beta \alpha }^{\left( \alpha \right) }(\boldsymbol{\xi }%
_{\ast },\boldsymbol{\xi },I_{\ast },I),  \label{sa2}
\end{equation}%
since, by applying first relation of $\left( \ref{rel2}\right) $ and
renaming $\left\{ \boldsymbol{\xi }^{\prime },I^{\prime }\right\}
\leftrightarrows \left\{ \boldsymbol{\xi }_{\ast }^{\prime },I_{\ast
}^{\prime }\right\} $, 
\begin{eqnarray*}
k_{\alpha \beta 1}^{\left( \beta \right) }(\boldsymbol{\xi },\boldsymbol{\xi 
}_{\ast },I,I_{\ast }) &=&\int_{\left( \mathbb{R}^{3}\times \mathbb{R}%
_{+}\right) ^{2}}\frac{w_{\beta \alpha }(\boldsymbol{\xi }_{\ast },%
\boldsymbol{\xi },I_{\ast },I\left\vert \boldsymbol{\xi }_{\ast }^{\prime },%
\boldsymbol{\xi }^{\prime },I_{\ast }^{\prime },I^{\prime }\right. )}{\left(
M_{\alpha }M_{\beta \ast }\right) ^{1/2}}\,d\boldsymbol{\xi }^{\prime }d%
\boldsymbol{\xi }_{\ast }^{\prime }dI^{\prime }dI_{\ast }^{\prime } \\
&=&\int_{\left( \mathbb{R}^{3}\times \mathbb{R}_{+}\right) ^{2}}\frac{%
w_{\beta \alpha }(\boldsymbol{\xi }_{\ast },\boldsymbol{\xi },I_{\ast
},I\left\vert \boldsymbol{\xi }^{\prime },\boldsymbol{\xi }_{\ast }^{\prime
},I^{\prime },I_{\ast }^{\prime }\right. )}{\left( M_{\alpha }M_{\beta \ast
}\right) ^{1/2}}\,d\boldsymbol{\xi }^{\prime }d\boldsymbol{\xi }_{\ast
}^{\prime }dI^{\prime }dI_{\ast }^{\prime } \\
&=&k_{\beta \alpha 1}^{\left( \alpha \right) }(\boldsymbol{\xi }_{\ast },%
\boldsymbol{\xi },I_{\ast },I)\text{,}
\end{eqnarray*}%
while, by applying the first two relations of $\left( \ref{rel2}\right) $ and then
renaming $\left\{ \boldsymbol{\xi }^{\prime },I^{\prime }\right\}
\leftrightarrows \left\{ \boldsymbol{\xi }_{\ast }^{\prime },I_{\ast
}^{\prime }\right\} $,%
\begin{eqnarray*}
k_{\alpha \beta 2}^{\left( \beta \right) }(\boldsymbol{\xi },\boldsymbol{\xi 
}_{\ast },I,I_{\ast }) &=&\int_{\left( \mathbb{R}^{3}\times \mathbb{R}%
_{+}\right) ^{2}}\frac{w_{\beta \alpha }(\boldsymbol{\xi }_{\ast }^{\prime },%
\boldsymbol{\xi },I_{\ast }^{\prime },I\left\vert \boldsymbol{\xi }_{\ast },%
\boldsymbol{\xi }^{\prime },I_{\ast },I^{\prime }\right. )}{\left( M_{\alpha
}M_{\beta \ast }\right) ^{1/2}}\,d\boldsymbol{\xi }^{\prime }d\boldsymbol{%
\xi }_{\ast }^{\prime }dI^{\prime }dI_{\ast }^{\prime } \\
&=&\int_{\left( \mathbb{R}^{3}\times \mathbb{R}_{+}\right) ^{2}}\frac{%
w_{\beta \alpha }(\boldsymbol{\xi }_{\ast },\boldsymbol{\xi }^{\prime
},I_{\ast },I^{\prime }\left\vert \boldsymbol{\xi }_{\ast }^{\prime },%
\boldsymbol{\xi },I_{\ast }^{\prime },I\right. )}{\left( M_{\alpha }M_{\beta
\ast }\right) ^{1/2}}\,d\boldsymbol{\xi }^{\prime }d\boldsymbol{\xi }_{\ast
}^{\prime }dI^{\prime }dI_{\ast }^{\prime } \\
&=&\int_{\left( \mathbb{R}^{3}\times \mathbb{R}_{+}\right) ^{2}}\frac{%
w_{\beta \alpha }(\boldsymbol{\xi }_{\ast },\boldsymbol{\xi }_{\ast
}^{\prime },I_{\ast },I_{\ast }^{\prime }\left\vert \boldsymbol{\xi }%
^{\prime },\boldsymbol{\xi },I^{\prime },I\right. )}{\left( M_{\alpha
}M_{\beta \ast }\right) ^{1/2}}\,d\boldsymbol{\xi }^{\prime }d\boldsymbol{%
\xi }_{\ast }^{\prime }dI^{\prime }dI_{\ast }^{\prime } \\
&=&k_{\beta \alpha 2}^{\left( \alpha \right) }(\boldsymbol{\xi }_{\ast },%
\boldsymbol{\xi },I_{\ast },I)\text{.}
\end{eqnarray*}


\begin{figure}[h]
\centering
\includegraphics[width=0.6\textwidth]{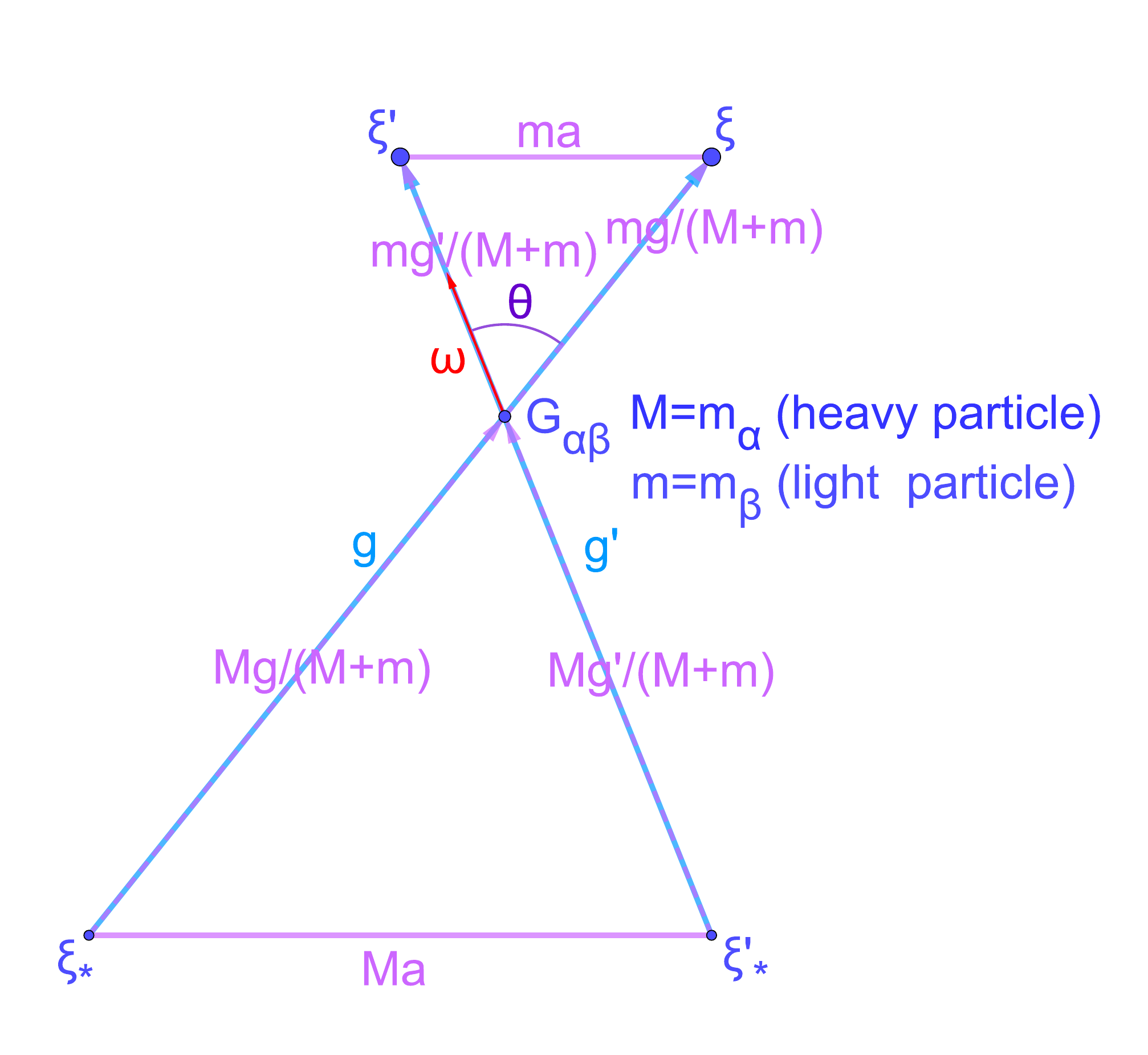}
\caption{Classical representation of an inelastic collision between particles of different species.}
\label{fig1}
\end{figure}

We now continue by proving the compactness for the three different types of
collision kernel separately. Note that, by applying the last relation in $%
\left( \ref{rel2}\right) $, $k_{\alpha \beta 2}^{\left( \beta \right) }(%
\boldsymbol{\xi },\boldsymbol{\xi }_{\ast },I,I_{\ast })=k_{\alpha \beta
2}^{\left( \alpha \right) }(\boldsymbol{\xi },\boldsymbol{\xi }_{\ast
},I,I_{\ast })$ if $\alpha =\beta $, and we will remain with only two cases
- the first two below. Even if $m_{\alpha }=m_{\beta }$, the kernels $%
k_{\alpha \beta }^{\left( \alpha \right) }(\boldsymbol{\xi },\boldsymbol{\xi 
}_{\ast },I,I_{\ast })$ and $k_{\alpha \beta 2}^{\left( \beta \right) }(%
\boldsymbol{\xi },\boldsymbol{\xi }_{\ast },I,I_{\ast })$ are structurally
equal, and we (in principle) remain with (first) two cases (the second one
twice).

\textbf{I. Compactness of }$K_{\alpha \beta 1}=\int_{\mathbb{R}^{3}\times 
\mathbb{R}_{+}}k_{\alpha \beta 1}^{\left( \beta \right) }(\boldsymbol{\xi },%
\boldsymbol{\xi }_{\ast },I,I_{\ast })\,h_{\beta \ast }\,d\boldsymbol{\xi }%
_{\ast }dI_{\ast }$ for $\left\{ \alpha ,\beta \right\} \subseteq \left\{
1,...,s\right\} $.

By a change of variables 
\begin{equation*}
\left\{ \boldsymbol{\xi }^{\prime },\boldsymbol{\xi }_{\ast }^{\prime
}\right\} \rightarrow \left\{ \left\vert \mathbf{g}^{\prime }\right\vert
=\left\vert \boldsymbol{\xi }^{\prime }-\boldsymbol{\xi }_{\ast }^{\prime
}\right\vert ,\boldsymbol{\omega }=\dfrac{\mathbf{g}^{\prime }}{\left\vert 
\mathbf{g}^{\prime }\right\vert },\mathbf{G}_{\alpha \beta }^{\prime }=%
\dfrac{m_{\alpha }\boldsymbol{\xi }^{\prime }+m_{\beta }\boldsymbol{\xi }%
_{\ast }^{\prime }}{m_{\alpha }+m_{\beta }}\right\} ,
\end{equation*}%
cf. Figure $\ref{fig1}$, noting that $\left( \ref{df1}\right) $, and using
relation $\left( \ref{M1}\right) $, expression $\left( \ref{k1}\right) $ of $%
k_{\alpha \beta 1}^{\left( \beta \right) }$ may be transformed to%
\begin{eqnarray*}
&&k_{\alpha \beta 1}^{\left( \beta \right) }(\boldsymbol{\xi },\boldsymbol{%
\xi }_{\ast },I,I_{\ast }) \\
&=&\int_{\left( \mathbb{R}^{3}\times \mathbb{R}_{+}\right) ^{2}}\frac{\left(
M_{\alpha }^{\prime }M_{\beta \ast }^{\prime }\right) ^{1/2}W_{\alpha \beta }%
}{\left( II^{\prime }\right) ^{\delta ^{\left( \alpha \right) }/4-1/2}\left(
I_{\ast }I_{\ast }^{\prime }\right) ^{\delta ^{\left( \beta \right) }/4-1/2}}%
\,d\boldsymbol{\xi }^{\prime }d\boldsymbol{\xi }_{\ast }^{\prime }dI^{\prime
}dI_{\ast }^{\prime } \\
&=&\int_{\mathbb{R}^{3}\times \mathbb{R}_{+}^{3}\times \mathbb{S}^{2}}\frac{%
\left( M_{\alpha }^{\prime }M_{\beta \ast }^{\prime }\right) ^{1/2}W_{\alpha
\beta }\,}{\left( II^{\prime }\right) ^{\delta ^{\left( \alpha \right)
}/4-1/2}\left( I_{\ast }I_{\ast }^{\prime }\right) ^{\delta ^{\left( \beta
\right) }/4-1/2}}\left\vert \mathbf{g}^{\prime }\right\vert ^{2}d\mathbf{G}%
_{\alpha \beta }^{\prime }d\left\vert \mathbf{g}^{\prime }\right\vert d%
\boldsymbol{\omega \,}dI^{\prime }dI_{\ast }^{\prime } \\
&=&\frac{\left( M_{\alpha }M_{\beta \ast }\right) ^{1/2}}{I^{\delta ^{\left(
\alpha \right) }/2-1}I_{\ast }^{\delta ^{\left( \beta \right) }/2-1}}%
\left\vert \mathbf{g}\right\vert \int_{\mathbb{S}^{2}\times \mathbb{R}%
_{+}^{2}}\mathbf{1}_{\left\vert \mathbf{g}\right\vert ^{2}>2\widetilde{%
\Delta }I}\sigma _{\alpha \beta }\,d\boldsymbol{\omega }\,dI^{\prime
}dI_{\ast }^{\prime }\text{.}
\end{eqnarray*}%
Since $E_{\alpha \beta }\geq \Psi _{\alpha \beta }$, it follows, by
assumption $\left( \ref{est1}\right) $, that 
\begin{eqnarray*}
&&\left( k_{\alpha \beta 1}^{\left( \beta \right) }(\boldsymbol{\xi },%
\boldsymbol{\xi }_{\ast },I,I_{\ast })\right) ^{2} \\
&\leq &CM_{\alpha }M_{\beta \ast }\frac{I^{\delta ^{\left( \alpha \right)
}/2-1}I_{\ast }^{\delta ^{\left( \beta \right) }/2-1}}{\left\vert \mathbf{g}%
\right\vert ^{2}} \\
&&\times \left( \int_{\mathbb{S}^{2}\times \mathbb{R}_{+}^{2}}\Upsilon
_{\alpha \beta }\left( \Psi _{\alpha \beta }+\Psi _{\alpha \beta }^{\gamma
/2}\right) \mathbf{1}_{I^{\prime }\leq \mathcal{E}_{\alpha \beta }}\mathbf{1}%
_{I_{\ast }^{\prime }\leq \mathcal{E}_{\alpha \beta }^{\ast }}\,d\boldsymbol{%
\omega }\,dI^{\prime }dI_{\ast }^{\prime }\right) ^{2} \\
&\leq &C\frac{M_{\alpha }M_{\beta \ast }}{I^{\delta ^{\left( \alpha \right)
}/2-1}I_{\ast }^{\delta ^{\left( \beta \right) }/2-1}}\frac{I^{\delta
^{\left( \alpha \right) }-2}I_{\ast }^{\delta ^{\left( \beta \right) }-2}}{%
\left\vert \mathbf{g}\right\vert ^{2}}\left( E_{\alpha \beta }+E_{\alpha
\beta }^{\gamma /2}\right) ^{2} \\
&&\times \left( \int_{0}^{\mathcal{E}_{\alpha \beta }}\frac{\left( I^{\prime
}\right) ^{\delta ^{\left( \alpha \right) }/2-1}}{\mathcal{E}_{\alpha \beta
}^{\delta ^{\left( \alpha \right) }/2}}\,dI^{\prime }\right) ^{2}\left(
\int_{0}^{\mathcal{E}_{\alpha \beta }^{\ast }}\frac{\left( I_{\ast }^{\prime
}\right) ^{\delta ^{\left( \beta \right) }/2-1}}{\left( \mathcal{E}_{\alpha
\beta }^{\ast }\right) ^{\delta ^{\left( \beta \right) }/2}}\,dI_{\ast
}^{\prime }\right) ^{2} \\
&=&C\frac{M_{\alpha }M_{\beta \ast }}{I^{\delta ^{\left( \alpha \right)
}/2-1}I_{\ast }^{\delta ^{\left( \beta \right) }/2-1}}\frac{I^{\delta
^{\left( \alpha \right) }-2}I_{\ast }^{\delta ^{\left( \beta \right) }-2}}{%
\left\vert \mathbf{g}\right\vert ^{2}}\left( E_{\alpha \beta }+E_{\alpha
\beta }^{\gamma /2}\right) ^{2}.
\end{eqnarray*}%
Then, by assumption $\left( \ref{est1}\right) $ and\textbf{\ } 
\begin{equation*}
m_{\alpha }\frac{\left\vert \boldsymbol{\xi }\right\vert ^{2}}{2}+m_{\beta
}\frac{\left\vert \boldsymbol{\xi }_{\ast }\right\vert ^{2}}{2}+I+I_{\ast }=%
\frac{m_{\alpha }+m_{\beta }}{2}\left\vert \mathbf{G}_{\alpha \beta
}\right\vert ^{2}+E_{\alpha \beta }\text{,}
\end{equation*}%
where $E_{\alpha \beta }=\mu_{\alpha \beta }\left\vert \mathbf{g}\right\vert ^{2}/2+I+I_{\ast }$, the bound%
\begin{eqnarray}
&&\left( k_{\alpha \beta 1}^{\left( \beta \right) }(\boldsymbol{\xi },%
\boldsymbol{\xi }_{\ast },I,I_{\ast })\right) ^{2}  \notag \\
&\leq &Ce^{-\left( m_{\alpha }+m_{\beta }\right) \left\vert \mathbf{G}%
_{\alpha \beta }\right\vert ^{2}/2-E_{\alpha \beta }}\frac{I^{\delta
^{\left( \alpha \right) }/2-1}I_{\ast }^{\delta ^{\left( \beta \right) }/2-1}%
}{\left\vert \mathbf{g}\right\vert ^{2}}\left( 1+E_{\alpha \beta }^{2}\right)
 \notag \\
&\leq &Ce^{-\left( m_{\alpha }+m_{\beta }\right) \left\vert \mathbf{G}%
_{\alpha \beta }\right\vert ^{2}/2-E_{\alpha \beta }}\frac{\left(
1+\left\vert \mathbf{g}\right\vert ^{2}\right) ^{2}}{\left\vert \mathbf{g}%
\right\vert ^{2}}\notag \\
&&\times \left( 1+I\right) ^{2}I^{\delta ^{\left( \alpha \right)
}/2-1}\left( 1+I_{\ast }\right) ^{2}I_{\ast }^{\delta ^{\left( \beta \right)
}/2-1} \label{b1}
\end{eqnarray}%
may be obtained. Hence, by applying the bound $\left( \ref{b1}\right) $ and
first changing variables of integration $\left\{ \boldsymbol{\xi },%
\boldsymbol{\xi }_{\ast }\right\} \rightarrow \left\{ \mathbf{g},\mathbf{G}%
_{\alpha \beta }\right\} $, with unitary Jacobian, and then to spherical
coordinates,%
\begin{eqnarray*}
&&\int_{\left( \mathbb{R}^{3}\times \mathbb{R}_{+}\right) ^{2}}\left(
k_{\alpha \beta 1}^{\left( \beta \right) }(\boldsymbol{\xi },\boldsymbol{\xi 
}_{\ast },I,I_{\ast })\right) ^{2}d\boldsymbol{\xi \,}d\boldsymbol{\xi }%
_{\ast }dI\boldsymbol{\,}dI_{\ast } \\
&\leq &C\int_{\left( \mathbb{R}^{3}\right) ^{2}}e^{-\left( m_{\alpha
}+m_{\beta }\right) \left\vert \mathbf{G}_{\alpha \beta }\right\vert
^{2}/2}e^{-\mu_{\alpha \beta }\left\vert \mathbf{g}\right\vert ^{2}/
2 }\frac{1+\left\vert \mathbf{g}%
\right\vert ^{4}}{\left\vert \mathbf{g}\right\vert ^{2}}d\mathbf{g}%
\boldsymbol{\,}d\boldsymbol{\mathbf{G}_{\alpha \beta }\,} \\
&&\times \int_{0}^{\infty }\left( 1+I\right) ^{2}e^{-I}I^{\delta ^{\left(
\alpha \right) }-2}dI\int_{0}^{\infty }\left( 1+I_{\ast }\right)
^{2}e^{-I_{\ast }}I_{\ast }^{\delta ^{\left( \beta \right) }-2}dI_{\ast } \\
&=&C\int_{0}^{\infty }e^{-\mu_{\alpha \beta }r^{2}/4}\left( 1+r^{4}\right)
dr\int_{0}^{\infty }R^{2}e^{-R^{2}}dR=C
\end{eqnarray*}%
Therefore,%
\begin{equation*}
K_{\alpha \beta 1}=\int_{\mathbb{R}^{3}\times \mathbb{R}_{+}}k_{\alpha \beta
1}^{\left( \beta \right) }(\boldsymbol{\xi },\boldsymbol{\xi }_{\ast
},I,I_{\ast })\,h_{\beta \ast }\,d\boldsymbol{\xi }_{\ast }dI_{\ast }
\end{equation*}%
are Hilbert-Schmidt integral operators and as such compact on $L^{2}\left( d%
\mathbf{Z}_{\alpha }\right) $, see, e.g., Theorem 7.83 in \cite%
{RenardyRogers}, for $\left\{ \alpha ,\beta \right\} \subseteq \left\{
1,...,s\right\} $.


\begin{figure}[h]
\centering
\includegraphics[width=0.5\textwidth]{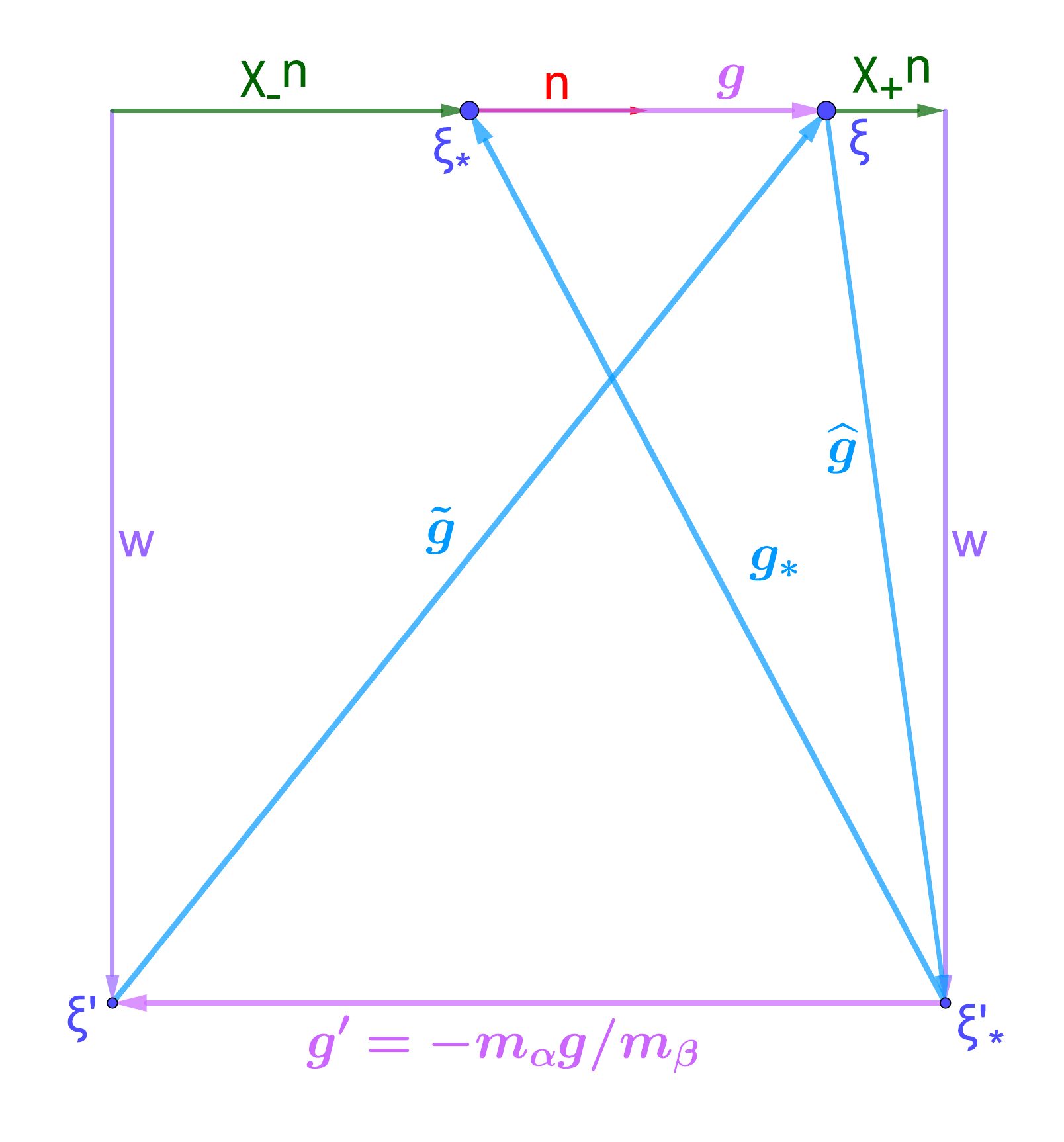}
\caption{Typical collision of $K_{\protect\alpha \protect\beta 3}$.}
\label{fig2}
\end{figure}

\textbf{II. Compactness of }$K_{\alpha \beta 3}=\int_{\mathbb{R}^{3}\times 
\mathbb{R}_{+}}k_{\alpha \beta }^{\left( \alpha \right) }(\boldsymbol{\xi },%
\boldsymbol{\xi }_{\ast },I,I_{\ast })\,h_{\alpha \ast }\,d\boldsymbol{\xi }%
_{\ast }dI_{\ast }$ for $\left\{ \alpha ,\beta \right\} \subseteq \left\{
1,...,s\right\} $.

Note that, cf. Figure \ref{fig2},%
\begin{eqnarray*}
&&W_{\alpha \beta }(\boldsymbol{\xi },\boldsymbol{\xi }^{\prime
},I,I^{\prime }\left\vert \boldsymbol{\xi }_{\ast },\boldsymbol{\xi }_{\ast
}^{\prime },I_{\ast },I_{\ast }^{\prime }\right. ) \\
&=&\left( m_{\alpha }+m_{\beta }\right) ^{2}m_{\alpha }m_{\beta }I^{\delta
^{\left( \alpha \right) }/2-1}\left( I^{\prime }\right) ^{\delta ^{\left(
\beta \right) }/2-1}\widetilde{\sigma }_{\alpha \beta }\delta _{3}\left(
m_{\alpha }\mathbf{g}+m_{\beta }\mathbf{g}^{\prime }\right) \\
&&\times \frac{\left\vert \widetilde{\mathbf{g}}\right\vert }{\left\vert 
\mathbf{g}_{\ast }\right\vert }\delta _{1}\left( \left\vert \mathbf{g}%
\right\vert m_{\alpha }\left( \chi -\frac{m_{\alpha }-m_{\beta }}{2m_{\beta }%
}\left\vert \mathbf{g}\right\vert \right) -\Delta I_{\ast }\right)
\end{eqnarray*}%
\begin{eqnarray*}
&=&\frac{\left\vert \widetilde{\mathbf{g}}\right\vert \!\left( m_{\alpha
}+m_{\beta }\right) ^{2}}{\left\vert \mathbf{g}_{\ast }\right\vert
\left\vert \mathbf{g}\right\vert m_{\beta }^{2}}I^{\delta ^{\left( \alpha
\right) }/2-1}\left( I^{\prime }\right) ^{\delta ^{\left( \beta \right)
}/2-1}\widetilde{\sigma }_{\alpha \beta }\delta _{3}\left( \frac{m_{\alpha }%
}{m_{\beta }}\mathbf{g}+\mathbf{g}^{\prime }\right) \\
&&\times \delta _{1}\left( \chi -\frac{m_{\alpha }-m_{\beta }}{2m_{\beta }}%
\left\vert \mathbf{g}\right\vert -\frac{\Delta I_{\ast }}{m_{\alpha
}\left\vert \mathbf{g}\right\vert }\right) \text{,}
\end{eqnarray*}%
where $\widetilde{\sigma }_{\alpha \beta }=\sigma _{\alpha \beta }\left(
\left\vert \widetilde{\mathbf{g}}\right\vert ,\dfrac{\widetilde{\mathbf{g}}%
\cdot \mathbf{g}_{\ast }}{\left\vert \widetilde{\mathbf{g}}\right\vert
\left\vert \mathbf{g}_{\ast }\right\vert },I,I^{\prime },I_{\ast },I_{\ast
}^{\prime }\right) $, $\mathbf{g}=\boldsymbol{\xi }-\boldsymbol{\xi }_{\ast
} $, $\mathbf{g}^{\prime }=\boldsymbol{\xi }^{\prime }-\boldsymbol{\xi }%
_{\ast }^{\prime }$, $\widetilde{\mathbf{g}}=\boldsymbol{\xi }-\boldsymbol{%
\xi }^{\prime }$, $\mathbf{g}_{\ast }=\boldsymbol{\xi }_{\ast }-\boldsymbol{%
\xi }_{\ast }^{\prime }$, $\Delta I_{\ast }=I_{\ast }+I_{\ast }^{\prime
}-I-I^{\prime }$, and $\chi =\left( \boldsymbol{\xi }_{\ast }^{\prime }-%
\boldsymbol{\xi }\right) \cdot \mathbf{n}$, with $\mathbf{n}=\dfrac{\mathbf{g%
}}{\left\vert \mathbf{g}\right\vert }$. Then by performing the following change of
variables $\left\{ \boldsymbol{\xi }^{\prime },\boldsymbol{\xi }_{\ast
}^{\prime }\right\} \rightarrow \left\{ \mathbf{g}^{\prime }=\boldsymbol{\xi 
}^{\prime }-\boldsymbol{\xi }_{\ast }^{\prime },~\widehat{\mathbf{g}}=%
\boldsymbol{\xi }_{\ast }^{\prime }-\boldsymbol{\xi }\right\} $, with%
\begin{equation*}
d\boldsymbol{\xi }^{\prime }d\boldsymbol{\xi }_{\ast }^{\prime }=d\mathbf{g}%
^{\prime }d\widehat{\mathbf{g}}=d\mathbf{g}^{\prime }d\chi d\mathbf{w}\text{%
, with }\mathbf{w}=\boldsymbol{\xi }_{\ast }^{\prime }-\boldsymbol{\xi }%
-\chi \mathbf{n}\text{,}
\end{equation*}%
the expression $\left( \ref{k1}\right) $ of $k_{\alpha \beta }^{\left(
\alpha \right) }$ may be rewritten in the following way%
\begin{eqnarray*}
&&k_{\alpha \beta }^{\left( \alpha \right) }(\boldsymbol{\xi },\boldsymbol{%
\xi }_{\ast },I,I_{\ast }) \\
&=&\int_{\left( \mathbb{R}^{3}\times \mathbb{R}_{+}\right) ^{2}}\left(
M_{\beta }^{\prime }M_{\beta \ast }^{\prime }\right) ^{1/2}\frac{W_{\alpha
\beta }(\boldsymbol{\xi },\boldsymbol{\xi }^{\prime },I,I^{\prime
}\left\vert \boldsymbol{\xi }_{\ast },\boldsymbol{\xi }_{\ast }^{\prime
},I_{\ast },I_{\ast }^{\prime }\right. )}{\left( II_{\ast }\right) ^{\delta
^{\left( \alpha \right) }/4-1/2}\left( I^{\prime }I_{\ast }^{\prime }\right)
^{\delta ^{\left( \beta \right) }/4-1/2}}d\mathbf{g}^{\prime }d\widehat{%
\mathbf{g}}dI^{\prime }dI_{\ast }^{\prime } \\
&=&\frac{I^{\delta ^{\left( \alpha \right) }/4-1/2}}{I_{\ast }^{\delta
^{\left( \alpha \right) }/4-1/2}}\int_{\left( \mathbb{R}^{3}\right) ^{\perp
_{\mathbf{n}}}\times \mathbb{R}_{+}^{2}}\frac{\left( m_{\alpha }+m_{\beta
}\right) ^{2}}{m_{\beta }^{2}}\frac{\left\vert \widetilde{\mathbf{g}}%
\right\vert \left( M_{\beta }^{\prime }M_{\beta \ast }^{\prime }\right)
^{1/2}}{\left\vert \mathbf{g}_{\ast }\right\vert \left\vert \mathbf{g}%
\right\vert }\frac{\left( I^{\prime }\right) ^{\delta ^{\left( \beta \right)
}/4-1/2}}{\left( I^{\prime }\right) _{\ast }^{\delta ^{\left( \beta \right)
}/4-1/2}} \\
&&\times \mathbf{1}_{\mu_{\alpha \beta }\left\vert \widetilde{\mathbf{g}}\right\vert ^{2}>2%
\Delta I_{\ast }}\widetilde{\sigma }_{\alpha \beta }d\mathbf{w}%
dI^{\prime }dI_{\ast }^{\prime }\text{.}
\end{eqnarray*}%
Here, see Figure $\ref{fig2}$,%
\begin{equation*}
\left\{ 
\begin{array}{l}
\boldsymbol{\xi }^{\prime }=\boldsymbol{\xi }_{\ast }+\mathbf{w}-\chi _{-}%
\mathbf{n} \\ 
\boldsymbol{\xi }_{\ast }^{\prime }=\boldsymbol{\xi }+\mathbf{w}-\chi _{+}%
\mathbf{n}%
\end{array}%
\right. \text{, with }\chi _{\pm }=\frac{\Delta I_{\ast }}{m_{\alpha
}\left\vert \mathbf{g}\right\vert }\pm \frac{m_{\alpha }-m_{\beta }}{%
2m_{\beta }}\left\vert \mathbf{g}\right\vert \text{,}
\end{equation*}%
implying that%
\begin{eqnarray}
&&\frac{\left\vert \boldsymbol{\xi }^{\prime }\right\vert ^{2}}{2}+\frac{%
\left\vert \boldsymbol{\xi }_{\ast }^{\prime }\right\vert ^{2}}{2}  \notag \\
&=&\left\vert \frac{\boldsymbol{\xi +\xi }_{\ast }}{2}-\frac{\Delta I_{\ast }%
}{m_{\alpha }\left\vert \mathbf{g}\right\vert }\mathbf{n}+\mathbf{w}%
\right\vert ^{2}+\frac{m_{\alpha }^{2}}{4m_{\beta }^{2}}\left\vert 
\boldsymbol{\xi }-\boldsymbol{\xi }_{\ast }\right\vert ^{2}  \notag \\
&=&\left\vert \frac{\left( \boldsymbol{\xi +\xi }_{\ast }\right) _{\perp _{%
\boldsymbol{n}}}}{2}+\mathbf{w}\right\vert ^{2}+\left( \frac{\left( 
\boldsymbol{\xi +\xi }_{\ast }\right) _{\mathbf{n}}}{2}-\frac{\Delta I_{\ast
}}{m_{\alpha }\left\vert \mathbf{g}\right\vert }\right) ^{2}+\frac{m_{\alpha
}^{2}}{4m_{\beta }^{2}}\left\vert \mathbf{g}\right\vert ^{2}   \notag \\
&=&\left\vert \frac{\left( \boldsymbol{\xi +\xi }_{\ast }\right) _{\perp _{%
\boldsymbol{n}}}}{2}+\mathbf{w}\right\vert ^{2}+\frac{\left( m_{\alpha
}\left( \left\vert \boldsymbol{\xi }_{\ast }\right\vert ^{2}-\left\vert 
\boldsymbol{\xi }\right\vert ^{2}\right) +2\Delta I_{\ast }\right) ^{2}}{%
4m_{\alpha }^{2}\left\vert \mathbf{g}\right\vert ^{2}}+\frac{m_{\alpha }^{2}%
}{4m_{\beta }^{2}}\left\vert \mathbf{g}\right\vert ^{2},  \notag \\ \label{eq1}
\end{eqnarray}%
where$\!\!$%
\begin{equation*}
\left( \boldsymbol{\xi +\xi }_{\ast }\right) _{\mathbf{n}}=\left( 
\boldsymbol{\xi +\xi }_{\ast }\right) \cdot \mathbf{n}=\frac{\left\vert 
\boldsymbol{\xi }\right\vert ^{2}-\left\vert \boldsymbol{\xi }_{\ast
}\right\vert ^{2}}{\left\vert \boldsymbol{\xi }-\boldsymbol{\xi }_{\ast
}\right\vert },\ \left( \boldsymbol{\xi +\xi }_{\ast }\right) _{\perp _{%
\boldsymbol{n}}}=\boldsymbol{\xi +\xi }_{\ast }-\left( \boldsymbol{\xi +\xi }%
_{\ast }\right) _{\mathbf{n}}\mathbf{n}.
\end{equation*}

Let $0\leq \varpi \leq 1$, with $\varpi \equiv 0$ if the species $a_{\alpha
} $, $\alpha \in \left\{ 1,...,s_{0}\right\} $, is monatomic. Note that%
\begin{equation}
\frac{\left( II_{\ast }\right) ^{\delta ^{\left( \alpha \right) }/4-1/2}}{%
\widetilde{\mathcal{E}}_{\alpha \beta }^{\delta ^{\left( \alpha \right) }/2}}%
\leq \frac{C}{\widetilde{\mathcal{E}}_{\alpha \beta }^{\varpi }I^{\kappa
}I_{\ast }^{1-\kappa -\varpi }}\leq \frac{C}{\left\vert \mathbf{g}%
\right\vert ^{2\varpi }I^{\kappa }I_{\ast }^{1-\kappa -\varpi }}\text{ for }%
0\leq \kappa \leq 1-\varpi \text{.}  \label{b5}
\end{equation}%
Here and below $\widetilde{\mathcal{E}}_{\alpha \beta }=\widetilde{\mathcal{E}}_{\beta
\alpha }^{\ast }=1$ if $\alpha \in \left\{ 1,...,s_{0}\right\} $, while if $\alpha \in \left\{ s_{0}+1,...,s\right\} $ and $\beta \in \left\{
1,...,s_{0}\right\} $, then $%
\widetilde{\mathcal{E}}_{\beta \alpha }^{\ast }=\mu_{\alpha \beta }\left\vert \widetilde{\mathbf{g}}%
\right\vert ^{2}/2+I^{\prime }=\mu_{\alpha \beta }\left\vert \mathbf{g}_{\ast }\right\vert ^{2}/2+I_{\ast
}^{\prime }$ and $\widetilde{\mathcal{E}}_{\alpha \beta }=\mu_{\alpha \beta }\left\vert \widetilde{%
\mathbf{g}}\right\vert ^{2}/2+I=\mu_{\alpha \beta }\left\vert \mathbf{g}_{\ast }\right\vert ^{2}/2+I_{\ast }$, and if $%
\left\{ \alpha ,\beta \right\} \subset \left\{ s_{0}+1,...,s\right\} $, then $\widetilde{\mathcal{E}}_{\alpha \beta }=%
\widetilde{\mathcal{E}}_{\alpha \beta }^{\ast }=\mu_{\alpha \beta }\left\vert \widetilde{\mathbf{g}}%
\right\vert ^{2}/2+I+I^{\prime }=\mu_{\alpha \beta }\left\vert \mathbf{g}_{\ast }\right\vert
^{2}/2+I_{\ast }+I_{\ast }^{\prime }$.

Denoting $\left( \mathbb{R}^{3}\right) ^{\perp _{\mathbf{n}}}=\left\{ 
\mathbf{w}\in \mathbb{R}^{3};\text{ }\mathbf{w}\perp \mathbf{n}\right\} $,
we obtain the bound%
\begin{eqnarray}
&&\int_{\left( \mathbb{R}^{3}\right) ^{\perp _{\mathbf{n}}}}\left( 1+\frac{%
\mathbf{1}_{\mu_{\alpha \beta }\left\vert \widetilde{\mathbf{g}}\right\vert ^{2}>2\Delta I_{\ast }}}{\widetilde{\Psi }_{\alpha \beta }^{1-\gamma /2}}\right)
\exp \left( -\frac{m_{\beta }}{2}\left\vert \frac{\left( \boldsymbol{\xi
+\xi }_{\ast }\right) _{\perp _{\boldsymbol{n}}}}{2}+\mathbf{w}\right\vert
^{2}\right) d\mathbf{w}  \notag \\
&\leq &C\int_{\left( \mathbb{R}^{3}\right) ^{\perp _{\mathbf{n}}}}\left(
1+\left\vert \mathbf{w}\right\vert ^{\gamma -2}\right) \exp \left( -\frac{%
m_{\beta }}{2}\left\vert \frac{\left( \boldsymbol{\xi +\xi }_{\ast }\right)
_{\perp _{\boldsymbol{n}}}}{2}+\mathbf{w}\right\vert ^{2}\right) \,d\mathbf{%
w\,}  \notag \\
&\leq &C\left( \int\limits_{\left\vert \mathbf{w}\right\vert \leq 1}1+\left\vert 
\mathbf{w}\right\vert ^{\gamma -2}d\mathbf{w}+2\int\limits_{\left\vert \mathbf{w}%
\right\vert \geq 1}\exp \left( -\frac{m_{\beta }}{2}\left\vert \frac{\left( 
\boldsymbol{\xi +\xi }_{\ast }\right) _{\perp _{\boldsymbol{n}}}}{2}+\mathbf{%
w}\right\vert ^{2}\right) d\mathbf{w}\right)  \notag \\
&\leq &C\left( \int_{\left\vert \mathbf{w}\right\vert \leq 1}1+\left\vert 
\mathbf{w}\right\vert ^{\gamma -2}\,d\mathbf{w}\,+\int_{\left( \mathbb{R}%
^{3}\right) ^{\perp _{\mathbf{n}}}}e^{-\left\vert \widetilde{\mathbf{w}}%
\right\vert ^{2}}\,d\widetilde{\mathbf{w}}\right)  \notag \\
&=&C\left( \int_{0}^{1}1+r^{\gamma -1}\,dr+\int_{0}^{\infty
}re^{-r^{2}}\,dr\right) =C\text{, where }\widetilde{\Psi }_{\alpha \beta
}=\left\vert \widetilde{\mathbf{g}}\right\vert \left\vert \mathbf{g}_{\ast
}\right\vert \text{.}  \label{b5a}
\end{eqnarray}%
By the bounds $\left( \ref{b5}\right) $ and $\left( \ref{b5a}\right) $, and
assumption $\left( \ref{est1}\right) $, for any number $\kappa $, $0\leq
\kappa \leq 1-\varpi $,%
\begin{eqnarray}
&&k_{\alpha \beta }^{\left( \alpha \right) }(\boldsymbol{\xi },\boldsymbol{%
\xi }_{\ast },I,I_{\ast })  \notag \\
&\leq &\frac{C}{\left\vert \mathbf{g}\right\vert ^{1+2\varpi }}\int\limits_{\mathbb{%
R}_{+}^{2}}\exp \left( -m_{\beta }\frac{\left( m_{\alpha }\left( \left\vert 
\boldsymbol{\xi }_{\ast }\right\vert ^{2}-\left\vert \boldsymbol{\xi }%
\right\vert ^{2}\right) +2\Delta I_{\ast }\right) ^{2}}{8m_{\alpha
}^{2}\left\vert \mathbf{g}\right\vert ^{2}}-\frac{m_{\alpha }^{2}}{8m_{\beta
}}\left\vert \mathbf{g}\right\vert ^{2}\right)  \notag \\
&&\times \int\limits_{\left( \mathbb{R}^{3}\right) ^{\perp _{\mathbf{n}}}}\left( 1+%
\frac{\mathbf{1}_{\mu_{\alpha \beta }\left\vert \widetilde{\mathbf{g}}\right\vert ^{2}>2\Delta I_{\ast }}}{\widetilde{\Psi }_{\alpha \beta }^{1-\gamma
/2}}\right) \exp \left( -\frac{m_{\beta }}{2}\left\vert \frac{\left( 
\boldsymbol{\xi +\xi }_{\ast }\right) _{\perp _{\boldsymbol{n}}}}{2}+\mathbf{%
w}\right\vert ^{2}\right) d\mathbf{w}  \notag \\
&&\,\times \mathbf{\,}e^{-\left( I^{\prime }+I_{\ast }^{\prime }\right) /2}%
\frac{\left( I^{\prime }I_{\ast }^{\prime }\right) ^{\delta ^{\left( \beta
\right) }/2-1}}{\widetilde{\mathcal{E}}_{\alpha \beta }^{\delta ^{\left(
\beta \right) }/2}}dI^{\prime }dI_{\ast }^{\prime }\frac{1}{I^{\kappa
}I_{\ast }^{1-\kappa -\varpi }}  \notag \\
&\leq &\frac{C}{\left\vert \mathbf{g}\right\vert ^{1+2\varpi }}\frac{1}{%
I^{\kappa }I_{\ast }^{1-\kappa -\varpi }}\int\limits_{\mathbb{R}_{+}^{2}}\exp
\left( -\frac{m_{\beta }}{8}\left( \left\vert \mathbf{g}\right\vert
-2\left\vert \boldsymbol{\xi }\right\vert \cos \varphi +2\chi _{\alpha
}\right) ^{2}-\frac{m_{\alpha }^{2}}{8m_{\beta }}\left\vert \mathbf{g}%
\right\vert ^{2}\right)  \notag \\
&&\times \frac{e^{-\left( I^{\prime }+I_{\ast }^{\prime }\right) /2}}{\left(
I^{\prime }I_{\ast }^{\prime }\right) ^{1-\delta ^{\left( \beta \right) }/4}}%
dI^{\prime }dI_{\ast }^{\prime }\text{, where }\chi _{\alpha }=\frac{\Delta
I_{\ast }}{m_{\alpha }\left\vert \mathbf{g}\right\vert }\text{ and }\cos
\varphi =\mathbf{n}\cdot \frac{\boldsymbol{\xi }}{\left\vert \boldsymbol{\xi 
}\right\vert }\text{.}  \label{b2a}
\end{eqnarray}%
For $0\leq \kappa \leq 1-\varpi $, by the bound $\left( \ref{b2a}\right) $
and the Cauchy-Schwarz inequality, 
\begin{eqnarray}
&&\left( k_{\alpha \beta }^{\left( \alpha \right) }(\boldsymbol{\xi },%
\boldsymbol{\xi }_{\ast },I,I_{\ast })\right) ^{2}  \notag \\
&\leq &\frac{C}{\left\vert \mathbf{g}\right\vert ^{2+4\varpi }}\frac{1}{%
I^{2\kappa }I_{\ast }^{2-2\kappa -2\varpi }}\int_{\mathbb{R}_{+}^{2}}\mathbf{%
\,}\frac{e^{-\left( I^{\prime }+I_{\ast }^{\prime }\right) /2}}{\left(
I^{\prime }I_{\ast }^{\prime }\right) ^{1-\delta ^{\left( \beta \right) }/4}}%
dI^{\prime }dI_{\ast }^{\prime }\int_{\mathbb{R}_{+}^{2}}\frac{e^{-\left(
I^{\prime }+I_{\ast }^{\prime }\right) /2}}{\left( I^{\prime }I_{\ast
}^{\prime }\right) ^{1-\delta ^{\left( \beta \right) }/4}}  \notag \\
&&\times \exp \left( -\frac{m_{\beta }}{4}\left( \left\vert \mathbf{g}%
\right\vert -2\left\vert \boldsymbol{\xi }\right\vert \cos \varphi +2\chi
_{\alpha }\right) ^{2}-\frac{m_{\alpha }^{2}}{4m_{\beta }}\left\vert \mathbf{%
g}\right\vert ^{2}\right) dI^{\prime }dI_{\ast }^{\prime }  \notag \\
&=&\frac{C}{\left\vert \mathbf{g}\right\vert ^{2+4\varpi }}\int_{\mathbb{R}%
_{+}^{2}}\exp \left( -\frac{m_{\beta }}{4}\left( \left\vert \mathbf{g}%
\right\vert -2\left\vert \boldsymbol{\xi }\right\vert \cos \varphi +2\chi
_{\alpha }\right) ^{2}-\frac{m_{\alpha }^{2}}{4m_{\beta }}\left\vert \mathbf{%
g}\right\vert ^{2}\right)  \notag \\
&&\times \frac{e^{-\left( I^{\prime }+I_{\ast }^{\prime }\right) /2}}{\left(
I^{\prime }I_{\ast }^{\prime }\right) ^{1-\delta ^{\left( \beta \right) }/4}}%
dI^{\prime }dI_{\ast }^{\prime }\frac{1}{I^{2\kappa }I_{\ast }^{2-2\kappa
-2\varpi }}\text{, with }\cos \varphi =\mathbf{n}\cdot \dfrac{\boldsymbol{%
\xi }}{\left\vert \boldsymbol{\xi }\right\vert }.  \label{b2}
\end{eqnarray}

By the change of variables $I_{\ast }\rightarrow \Phi =\left\vert \mathbf{g}%
\right\vert -2\left\vert \boldsymbol{\xi }\right\vert \cos \varphi +2\chi
_{\alpha }$, noting that%
\begin{equation*}
dI_{\ast }=\frac{m_{\alpha }}{2}\left\vert \mathbf{g}\right\vert \,d\Phi 
\text{,}
\end{equation*}%
for any positive number $a>0$, the bound 
\begin{eqnarray}
&&\int_{\mathbb{R}_{+}^{3}}\exp \left( -\frac{m_{\beta }}{a}\left(
\left\vert \mathbf{g}\right\vert -2\left\vert \boldsymbol{\xi }\right\vert
\cos \varphi +2\chi _{\alpha }\right) ^{2}\right) \frac{e^{-\left( I^{\prime
}+I_{\ast }^{\prime }\right) /2}}{\left( I^{\prime }I_{\ast }^{\prime
}\right) ^{1-\delta ^{\left( \beta \right) }/4}}dI_{\ast }dI^{\prime
}dI_{\ast }^{\prime }  \notag \\
&\leq &C\left\vert \mathbf{g}\right\vert \int_{\mathbb{-\infty }}^{\infty
}e^{-m_{\beta }\Phi ^{2}/a}\,d\Phi \left( \int_{0}^{\infty }\frac{%
e^{-I^{\prime }/2}}{\left( I^{\prime }\right) ^{1-\delta ^{\left( \beta
\right) }/4}}dI^{\prime }\right) ^{2}=C\left\vert \mathbf{g}\right\vert
\label{b3}
\end{eqnarray}%
may be obtained. Note also that 
\begin{equation}
\int_{\mathbb{R}_{+}}\left( \frac{1}{\left\vert \mathbf{g}\right\vert ^{4}}%
\mathbf{1}_{I\leq 1/2}+\frac{1}{I^{2}}\mathbf{1}_{I\geq 1/2}\right) dI\leq
C\left( 1+\frac{1}{\left\vert \mathbf{g}\right\vert ^{4}}\right) \text{.}
\label{b3a}
\end{equation}%
Then, by the bounds $\left( \ref{b2}\right) $ for $\kappa =1-\varpi =\left\{ 
\begin{array}{l}
0\text{ if }I\leq 1/2 \\ 
1\text{ if }I>1/2%
\end{array}%
\right. $, $\left( \ref{b3}\right) $, and $\left( \ref{b3a}\right) $, $%
k_{\alpha \beta }^{\left( \alpha \right) }(\boldsymbol{\xi },\boldsymbol{\xi 
}_{\ast },I,I_{\ast })\mathbf{1}_{\mathfrak{h}_{N}}\in L^{2}\left( d%
\boldsymbol{\xi \,}\,d\boldsymbol{\xi }_{\ast }dI\,dI_{\ast }\right) $, since%
\begin{eqnarray*}
&&\int_{\mathfrak{h}_{N}}\left( k_{\alpha \beta }^{\left( \alpha \right) }(%
\boldsymbol{\xi },\boldsymbol{\xi }_{\ast },I,I_{\ast })\right) ^{2}\,d%
\boldsymbol{\xi }d\boldsymbol{\xi }_{\ast }dI\mathbf{\,}dI_{\ast } \\
&\leq &C\int_{\left\vert \mathbf{g}\right\vert \geq \frac{1}{N}}\left( \frac{%
1}{\left\vert \mathbf{g}\right\vert }+\frac{1}{\left\vert \mathbf{g}%
\right\vert ^{5}}\right) e^{-m_{\alpha }^{2}\left\vert \mathbf{g}\right\vert
^{2}/\left( 4m_{\beta }\right) }\mathbf{\,}d\mathbf{g}\int_{\left\vert 
\boldsymbol{\xi }\right\vert \leq N}\mathbf{\,}d\boldsymbol{\xi \,} \\
&=&C\int_{1/N}^{\infty }\left( R+\frac{1}{R^{3}}\right) e^{-m_{\alpha
}^{2}R^{2}/\left( 4m_{\beta }\right) }dR\int_{0}^{N}r^{2}\mathbf{\,}dr%
\boldsymbol{\,} \\
&\leq &CN^{3}\int_{0}^{\infty }e^{-m_{\alpha }^{2}R^{2}/\left( 4m_{\beta
}\right) }(1+R^{4})N^{3}dR=CN^{6}\text{.}
\end{eqnarray*}

Note that, by the bound $\left( \ref{b2a}\right) $ for $\kappa =0$ and $%
\varpi \equiv 0$ if $I_{\ast }>1/2$,%
\begin{eqnarray}
&&k_{\alpha \beta }^{\left( \alpha \right) }(\boldsymbol{\xi },\boldsymbol{\xi 
}_{\ast },I,I_{\ast })\notag \\
&\leq &\frac{C}{\left\vert \mathbf{g}\right\vert }%
\int_{\mathbb{R}_{+}^{2}}\exp \left( -\frac{m_{\beta }}{8}\left( \left\vert 
\mathbf{g}\right\vert -2\left\vert \boldsymbol{\xi }\right\vert \cos \varphi
+2\chi _{\alpha }\right) ^{2}-\frac{m_{\alpha }^{2}}{8m_{\beta }}\left\vert 
\mathbf{g}\right\vert ^{2}\right)  \notag \\
&&\times \frac{e^{-\left( I^{\prime }+I_{\ast }^{\prime }\right) /2}}{\left(
I^{\prime }I_{\ast }^{\prime }\right) ^{1-\delta ^{\left( \beta \right) }/4}}%
dI^{\prime }dI_{\ast }^{\prime }\Psi _{\ast \varpi }^{\alpha }\text{ for }%
0\leq \varpi \leq 1\text{,}  \notag \\
\text{with }\Psi _{\ast \varpi }^{\alpha } &=&\left( \frac{1}{\left\vert 
\mathbf{g}\right\vert ^{2\varpi }I_{\ast }^{1-\varpi }}\mathbf{1}_{I_{\ast
}\leq 1/2}+\frac{1}{I_{\ast }}\mathbf{1}_{I_{\ast }\geq 1/2}\right)  \notag
\\
&\leq &\left( 1+\frac{1}{\left\vert \mathbf{g}\right\vert ^{2\varpi }}%
\right) \left( \frac{1}{I_{\ast }^{1-\varpi }}\mathbf{1}_{I_{\ast }\leq 1/2}+%
\frac{1}{I_{\ast }}\mathbf{1}_{I_{\ast }\geq 1/2}\right) .  \label{b7}
\end{eqnarray}

The integral of $k_{\alpha \beta }^{\left( \alpha \right) }(\boldsymbol{\xi }%
,\boldsymbol{\xi }_{\ast },I,I_{\ast })$ with respect to $\left( \boldsymbol{%
\xi },I\right) $ over $\mathbb{R}^{3}\times \mathbb{R}_{+}$ is bounded in $%
\left( \boldsymbol{\xi }_{\ast },I_{\ast }\right) $. Indeed, due to the
symmetry  $\left(\ref{sa1}\right)$ of the kernel $k_{\alpha \beta }^{\left( \alpha \right) }$, follows by the bounds $\left( \ref{b3}\right) $, and $%
\left( \ref{b7}\right) $ (for $\varpi =1/2$) that%
\begin{eqnarray*}
&&\int_{0}^{\infty }\int_{\mathbb{R}^{3}}k_{\alpha \beta }^{\left( \alpha
\right) }(\boldsymbol{\xi },\boldsymbol{\xi }_{\ast },I,I_{\ast })\,d%
\boldsymbol{\xi \,}dI=\int_{0}^{\infty }\int_{\mathbb{R}^{3}}k_{\alpha \beta
}^{\left( \alpha \right) }(\boldsymbol{\xi }_{\ast },\boldsymbol{\xi }%
,I_{\ast },I)\,d\mathbf{g}\boldsymbol{\,}dI\,\, \\
&\leq &\int_{0}^{\infty }\int_{\mathbb{R}^{3}}k_{\alpha \beta }^{\left(
\alpha \right) }(\boldsymbol{\xi }_{\ast },\boldsymbol{\xi },I_{\ast
},I)\left( \mathbf{1}_{I\leq 1/2}+2I\right) \,d\mathbf{g}\boldsymbol{\,}dI \\
&\leq &C\int_{\mathbb{R}^{3}}\left( 1+\frac{1}{\left\vert \mathbf{g}%
\right\vert ^{2}}\int_{0}^{1/2}\frac{1}{I^{1/2}}dI\left( \int_{0}^{\infty }%
\frac{e^{-I^{\prime }/2}}{\left( I^{\prime }\right) ^{1-\delta ^{\left(
\beta \right) }/4}}dI^{\prime }\right) ^{2}\right) \\
&&\times e^{-m_{\alpha }^{2}\left\vert \mathbf{g}\right\vert ^{2}/\left(
8m_{\beta }\right) }d\mathbf{g} \\
&\leq &C\int_{0}^{\infty }\left( 1+R^{2}\right) e^{-m_{\alpha
}^{2}R^{2}/\left( 8m_{\beta }\right) }dR=C\text{.}
\end{eqnarray*}

Aiming to prove that%
\begin{equation}
\int_{\mathbb{R}^{3}}\int_{0}^{\infty }k_{\alpha \beta }^{\left( \alpha
\right) }(\boldsymbol{\xi },\boldsymbol{\xi }_{\ast },I,I_{\ast })\,dI_{\ast
}d\boldsymbol{\xi }_{\ast }\leq C\frac{1+\log \left( 1+\left\vert 
\boldsymbol{\xi }\right\vert \right) }{\left\vert \boldsymbol{\xi }%
\right\vert }\text{,}  \label{b11}
\end{equation}%
for $\left\vert \boldsymbol{\xi }\right\vert \neq 0$, split the domain $%
\mathbb{R}^{3}\times \mathbb{R}_{+}$ into two subdomains%
\begin{equation*}
\mathcal{D}_{+}=\left\{ \mathbb{R}^{3}\times \mathbb{R}_{+}\text{; }I_{\ast
}\geq \left\vert \boldsymbol{\xi }\right\vert \right\} \text{ and }\mathcal{D%
}_{-}=\left\{ \mathbb{R}^{3}\times \mathbb{R}_{+}\text{; }I_{\ast }\leq
\left\vert \boldsymbol{\xi }\right\vert \right\} .
\end{equation*}%
By the bounds $\left( \ref{b3}\right) $ and $\left( \ref{b7}\right) $ (for $%
\varpi =0$), it can be shown that the restriction of bound $\left( \ref{b11}%
\right) $ to the domain $\mathcal{D}_{+}$ is satisfied for $\left\vert 
\boldsymbol{\xi }\right\vert \neq 0$ 
\begin{eqnarray*}
&&\int_{\mathbb{R}^{3}}\int_{\left\vert \boldsymbol{\xi }\right\vert
}^{\infty }k_{\alpha \beta }^{\left( \alpha \right) }(\boldsymbol{\xi },%
\boldsymbol{\xi }_{\ast },I,I_{\ast })\,dI_{\ast }d\boldsymbol{\xi }_{\ast
}\\
&\leq & \frac{C}{\left\vert \boldsymbol{\xi }\right\vert }\int_{\mathbb{R}%
^{3}}\int_{0}^{\infty }k_{\alpha \beta }^{\left( \alpha \right) }(%
\boldsymbol{\xi },\boldsymbol{\xi }_{\ast },I,I_{\ast })I_{\ast }\,dI_{\ast
}d\boldsymbol{\xi }_{\ast } \\
&\leq &\frac{C}{\left\vert \boldsymbol{\xi }\right\vert }\int_{\mathbb{R}%
^{3}}e^{-m_{\alpha }^{2}\left\vert \mathbf{g}\right\vert ^{2}/\left(
8m_{\beta }\right) }d\mathbf{g}\\
&\leq & \frac{C}{\left\vert \boldsymbol{\xi }%
\right\vert }\int_{0}^{\infty }R^{2}e^{-m_{\alpha }^{2}R^{2}/\left(
8m_{\beta }\right) }dR=\frac{C}{\left\vert \boldsymbol{\xi }\right\vert } \\
&\leq &C\frac{1+\log \left( 1+\left\vert \boldsymbol{\xi }\right\vert
\right) }{\left\vert \boldsymbol{\xi }\right\vert }\text{.}
\end{eqnarray*}%
On the other hand, by the bound $\left( \ref{b7}\right) $ (for $\varpi =1/2$
if $\alpha \in \left\{ s_{0}+1,...,s\right\} $), for $\left\vert \boldsymbol{%
\xi }\right\vert \neq 0$, it can be shown that the restriction of bound $%
\left( \ref{b11}\right) $ to the domain $\mathcal{D}_{-}$ is satisfied for $%
\left\vert \boldsymbol{\xi }\right\vert \neq 0$ 
\begin{eqnarray*}
&&\int_{0}^{\left\vert \boldsymbol{\xi }\right\vert }\int_{\mathbb{R}%
^{3}}k_{\alpha \beta }^{\left( \alpha \right) }(\boldsymbol{\xi },%
\boldsymbol{\xi }_{\ast },I,I_{\ast })\,d\boldsymbol{\xi }_{\ast }dI_{\ast }
\\
&\leq &C\int_{\mathbb{R}_{+}^{2}}\int_{0}^{\left\vert \boldsymbol{\xi }%
\right\vert }\int_{\mathbb{R}^{3}}\exp \left( -\frac{m_{\beta }}{8}\left(
\left\vert \mathbf{g}\right\vert -2\left\vert \boldsymbol{\xi }\right\vert
\cos \varphi +\chi _{\alpha }\right) ^{2}-\frac{m_{\alpha }^{2}}{8m_{\beta }}%
\left\vert \mathbf{g}\right\vert ^{2}\right)  \\
&&\times \frac{1+\left\vert \mathbf{g}\right\vert }{\left\vert \mathbf{g}%
\right\vert ^{2}}d\mathbf{g}\left( \frac{1}{I_{\ast }^{1/2}}\mathbf{1}%
_{I_{\ast }\leq 1/2}+\frac{1}{I_{\ast }}\mathbf{1}_{I_{\ast }\geq
1/2}\right) \mathbf{\,}\frac{e^{-\left( I^{\prime }+I_{\ast }^{\prime
}\right) /2}}{\left( I^{\prime }I_{\ast }^{\prime }\right) ^{1-\delta
^{\left( \beta \right) }/4}}dI_{\ast }dI^{\prime }dI_{\ast }^{\prime } \\
&\leq &\frac{C}{\left\vert \boldsymbol{\xi }\right\vert }\left(
\int_{0}^{1/2}\frac{1}{I_{\ast }^{1/2}}dI_{\ast }+\int_{1/2}^{\left\vert 
\boldsymbol{\xi }\right\vert }\frac{1}{I_{\ast }}\mathbf{1}_{\left\vert 
\boldsymbol{\xi }\right\vert \geq 1/2}dI_{\ast }\right) \left(
\int_{0}^{\infty }\frac{e^{-I^{\prime }/2}}{\left( I^{\prime }\right)
^{1-\delta ^{\left( \beta \right) }/4}}dI^{\prime }\right) ^{2} \\
&\leq &\frac{C}{\left\vert \boldsymbol{\xi }\right\vert }\left( 1+\log
\left( \left\vert \boldsymbol{\xi }\right\vert \right) \mathbf{1}%
_{\left\vert \boldsymbol{\xi }\right\vert \geq 1/2}\right) \leq C\frac{%
1+\log \left( 1+\left\vert \boldsymbol{\xi }\right\vert \right) }{\left\vert 
\boldsymbol{\xi }\right\vert }\text{.}
\end{eqnarray*}%
Here the second inequality follows for $\left\vert \boldsymbol{\xi }\right\vert \neq 0$, by, with $\chi _{R}^{\alpha }=\Delta
I_{\ast }/\left( m_{\alpha }R\right) $, 
\begin{eqnarray*}
&&C\int_{\mathbb{R}^{3}}\frac{1+\left\vert \mathbf{g}\right\vert }{%
\left\vert \mathbf{g}\right\vert ^{2}}\exp \left( -\frac{m_{\beta }}{8}%
\left( \left\vert \mathbf{g}\right\vert -2\left\vert \boldsymbol{\xi }%
\right\vert \cos \varphi +2\chi _{\alpha }\right) ^{2}-\frac{m_{\alpha }^{2}%
}{8m_{\beta }}\left\vert \mathbf{g}\right\vert ^{2}\right) \,d\mathbf{g} \\
&=&C\int\limits_{0}^{\infty }\int\limits_{0}^{\pi }(1+R)\exp \left( -\frac{%
m_{\beta }}{8}\left( R-2\left\vert \boldsymbol{\xi }\right\vert \cos \varphi
+2\chi _{R}^{\alpha }\right) ^{2}-\frac{m_{\alpha }^{2}}{8m_{\beta }}%
R^{2}\right) \\
&\times& \sin \varphi \,d\varphi \mathbf{\,}dR \\
&=&\frac{C}{\left\vert \boldsymbol{\xi }\right\vert }\int_{0}^{\infty
}\int_{R+2\chi _{R}^{\alpha }-2\left\vert \boldsymbol{\xi }\right\vert
}^{R+2\chi _{R}^{\alpha }+2\left\vert \boldsymbol{\xi }\right\vert
}e^{-m_{\beta }\Phi ^{2}/8}(1+R)e^{-m_{\alpha }^{2}R^{2}/\left( 8m_{\beta
}\right) }d\Phi dR \\
&\leq &\frac{C}{\left\vert \boldsymbol{\xi }\right\vert }\int_{0}^{\infty
}e^{-m_{\alpha }^{2}R^{2}/\left( 8m_{\beta }\right) }(1+R)\,dR\int_{-\infty
}^{\infty }e^{-m_{\beta }\Phi ^{2}/8}d\Phi =\frac{C}{\left\vert \boldsymbol{%
\xi }\right\vert }\text{,}
\end{eqnarray*}%
obtained by a change to spherical coordinates followed by the change of
variables $\varphi \rightarrow \Phi =R-2\left\vert \boldsymbol{\xi }%
\right\vert \cos \varphi +2\chi _{R}^{\alpha }$, with $d\Phi =\sin \varphi
\,d\varphi $.

Note that $f(x)=\dfrac{1+\log (1+x)}{x}$ is a decreasing function for $x>0$.
Therefore, by the bounds $\left( \ref{b3}\right) $, $\left( \ref{b7}\right) $%
, and $\left( \ref{b11}\right) $, 
\begin{eqnarray*}
&&\sup_{\left( \boldsymbol{\xi },I\right) \in \mathbb{R}^{3}\times \mathbb{R}%
_{+}}\int_{\mathbb{R}^{3}\times \mathbb{R}_{+}}k_{\alpha \beta }^{\left(
\alpha \right) }(\boldsymbol{\xi },\boldsymbol{\xi }_{\ast },I,I_{\ast
})-k_{\alpha \beta }^{\left( \alpha \right) }(\boldsymbol{\xi },\boldsymbol{%
\xi }_{\ast },I,I_{\ast })\mathbf{1}_{\mathfrak{h}_{N}}\,d\boldsymbol{\xi }%
_{\ast }dI_{\ast } \\
&\leq &\sup_{\left( \boldsymbol{\xi },I\right) \in \mathbb{R}^{3}\times 
\mathbb{R}_{+}}\int_{0}^{\infty }\int_{\left\vert \mathbf{g}\right\vert \leq 
\frac{1}{N}}k_{\alpha \beta }^{\left( \alpha \right) }(\boldsymbol{\xi },%
\boldsymbol{\xi }_{\ast },I,I_{\ast })\,d\mathbf{g\,}dI_{\ast } \\
&&+\sup_{\left\vert \boldsymbol{\xi }\right\vert \geq N}\int_{0}^{\infty
}\int_{\mathbb{R}^{3}}k_{\alpha \beta }^{\left( \alpha \right) }(\boldsymbol{%
\xi },\boldsymbol{\xi }_{\ast },I,I_{\ast })\,d\boldsymbol{\xi }_{\ast
}dI_{\ast } \\
&\leq &\sup_{\left( \boldsymbol{\xi },I\right) \in \mathbb{R}^{3}\times 
\mathbb{R}_{+}}\int_{0}^{\infty }\int_{\left\vert \mathbf{g}\right\vert \leq 
\frac{1}{N}}k_{\alpha \beta }^{\left( \alpha \right) }(\boldsymbol{\xi },%
\boldsymbol{\xi }_{\ast },I,I_{\ast })\left( \mathbf{1}_{I_{\ast }\leq
1/2}+2I_{\ast }\right) \,d\mathbf{g}\,dI_{\ast } \\
&&+C\frac{1+\log \left( 1+N\right) }{N} \\
&\leq &\int_{\left\vert \mathbf{g}\right\vert \leq \frac{1}{N}}\frac{C}{%
\left\vert \mathbf{g}\right\vert }\left( \frac{1}{\left\vert \mathbf{g}%
\right\vert }\int_{0}^{1/2}\frac{1}{I_{\ast }^{1/2}}dI_{\ast }+\left\vert 
\mathbf{g}\right\vert \right) \,d\mathbf{g}+C\frac{1+\log \left( 1+N\right) 
}{N} \\
&\leq &C\left( \int_{\left\vert \mathbf{g}\right\vert \leq \frac{1}{N}}1+%
\frac{1}{\left\vert \mathbf{g}\right\vert ^{2}}\,d\mathbf{g}+\frac{1+\log
\left( 1+N\right) }{N}\right) \\
&\leq &C\left( \int_{0}^{\frac{1}{N}}1+R^{2}\,dR+\frac{1+\log \left(
1+N\right) }{N}\right) \\
&=&C\left( \frac{1}{N}+\frac{1}{N^{3}}+\frac{1+\log \left( 1+N\right) }{N}%
\right) \rightarrow 0\text{ as }N\rightarrow \infty \text{.}
\end{eqnarray*}

Hence, by Lemma \ref{LGD}, the operators 
\begin{equation*}
K_{\alpha \beta 3}=\int_{\mathbb{R}^{3}\times \mathbb{R}_{+}}k_{\alpha \beta
}^{(\alpha )}(\boldsymbol{\xi },\boldsymbol{\xi }_{\ast },I,I_{\ast
})\,h_{\ast }\,d\boldsymbol{\xi }_{\ast }dI_{\ast }
\end{equation*}%
are compact on $L^{2}\left( d\mathbf{Z}_{\alpha }\right) $ for $\left\{
\alpha ,\beta \right\} \subseteq \left\{ 1,...,s\right\} $.


\begin{figure}[h]
\centering
\includegraphics[width=0.5\textwidth]{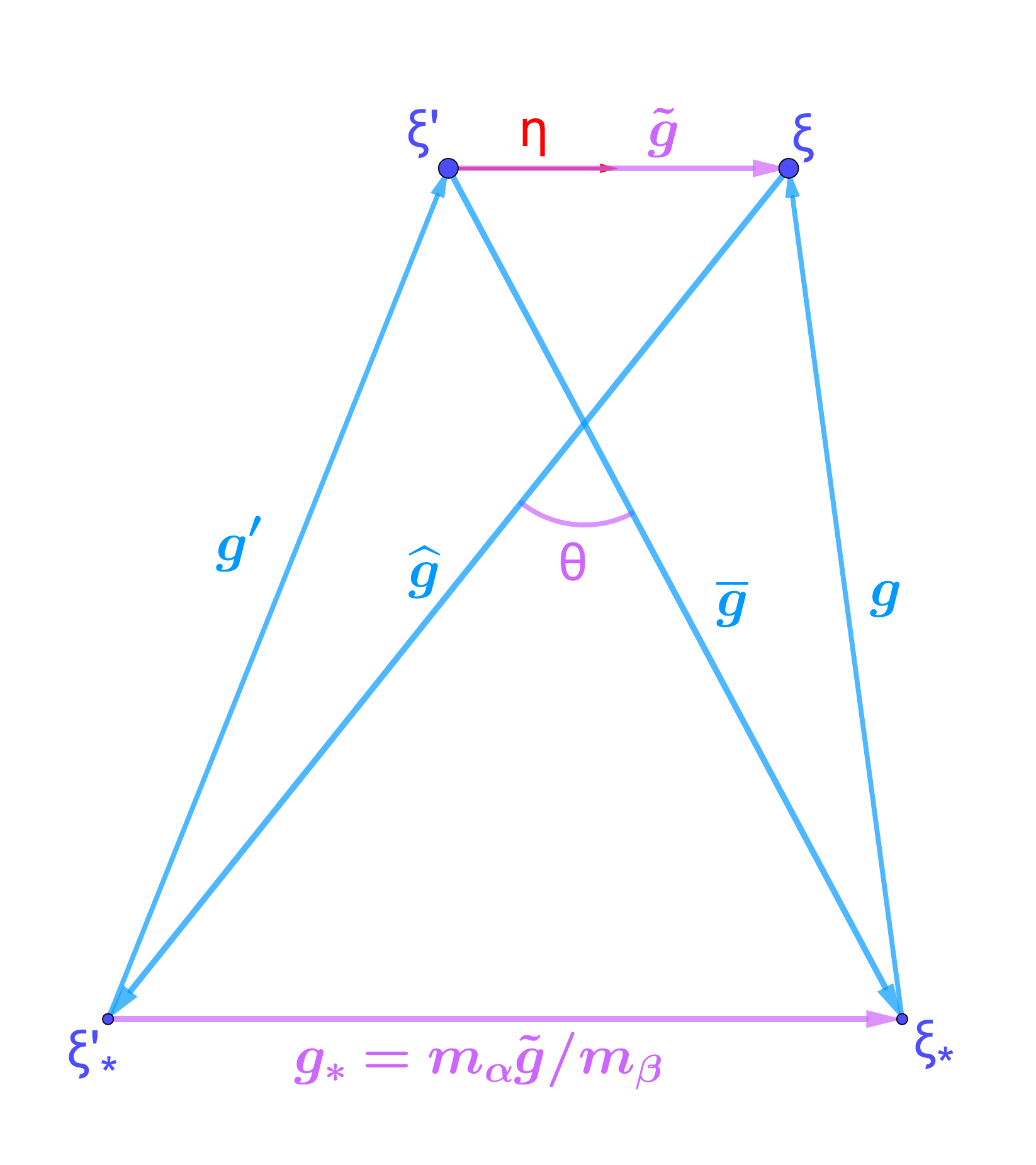}
\caption{Typical collision of $K_{\protect\alpha \protect\beta 2}$.}
\label{fig3}
\end{figure}

\textbf{III. Compactness of }$K_{\alpha \beta 2}=\int_{\mathbb{R}^{3}\times 
\mathbb{R}_{+}}k_{\alpha \beta 2}^{\left( \beta \right) }(\boldsymbol{\xi },%
\boldsymbol{\xi }_{\ast },I,I_{\ast })\,h_{\beta \ast }\,d\boldsymbol{\xi }%
_{\ast }dI_{\ast }$ for $\left\{ \alpha ,\beta \right\} \subseteq \left\{
1,...,s\right\} $.

Assume that $m_{\alpha }\neq m_{\beta }$, denote%
\begin{eqnarray*}
&&\mathbf{g}=\boldsymbol{\xi }-\boldsymbol{\xi }_{\ast }\text{, }\mathbf{g}%
^{\prime }=\boldsymbol{\xi }^{\prime }-\boldsymbol{\xi }_{\ast }^{\prime }%
\text{, }\widehat{\mathbf{g}}=\boldsymbol{\xi }_{\ast }^{\prime }-%
\boldsymbol{\xi }\text{, }\overline{\mathbf{g}}=\boldsymbol{\xi }_{\ast }-%
\boldsymbol{\xi }^{\prime }\text{, }\mathbf{g}_{\alpha \beta }=\dfrac{%
m_{\alpha }\boldsymbol{\xi }-m_{\beta }\boldsymbol{\xi }_{\ast }}{m_{\alpha
}-m_{\beta }}\text{, } \\
&&\mathbf{g}_{\alpha \beta }^{\prime }=\dfrac{m_{\alpha }\boldsymbol{\xi }%
^{\prime }-m_{\beta }\boldsymbol{\xi }_{\ast }^{\prime }}{m_{\alpha
}-m_{\beta }}\text{, }\widehat{\sigma }_{\alpha \beta }=\sigma _{\alpha
\beta }\left( \left\vert \widehat{\mathbf{g}}\right\vert ,\frac{\widehat{%
\mathbf{g}}\cdot \overline{\mathbf{g}}}{\left\vert \widehat{\mathbf{g}}%
\right\vert \left\vert \overline{\mathbf{g}}\right\vert },I,I_{\ast
}^{\prime },I^{\prime },I_{\ast }\right) \text{,} \\
&&\widehat{\Delta }_{\alpha \beta }I_{\#}=\frac{m_{\beta }-m_{\alpha }}{%
m_{\alpha }m_{\beta }}\Delta I_{\#}\text{, and }\Delta I_{\#}=I_{\ast
}+I^{\prime }-I-I_{\ast }^{\prime }\text{.}
\end{eqnarray*}%
Note that, cf. Figure \ref{fig3},%
\begin{eqnarray}
&&W_{\alpha \beta }\left( \boldsymbol{\xi },\boldsymbol{\xi }_{\ast
}^{\prime },I,I_{\ast }^{\prime }\left\vert \boldsymbol{\xi }^{\prime },%
\boldsymbol{\xi }_{\ast },I^{\prime },I_{\ast }\right. \right)  \notag \\
&=&\left( m_{\alpha }+m_{\beta }\right) ^{2}m_{\alpha }m_{\beta }I^{\delta
^{\left( \alpha \right) }/2-1}\left( I^{\prime }\right) ^{\delta ^{\left(
\beta \right) }/2-1}\widehat{\sigma }_{\alpha \beta }\delta _{3}\left(
\left( m_{\alpha }-m_{\beta }\right) \left( \mathbf{g}_{\alpha \beta }-%
\mathbf{g}_{\alpha \beta }^{\prime }\right) \right)  \notag \\
&&\times \frac{\left\vert \widehat{\mathbf{g}}\right\vert }{\left\vert 
\overline{\mathbf{g}}\right\vert }\delta _{1}\left( \frac{m_{\alpha
}m_{\beta }}{2\left( m_{\alpha }-m_{\beta }\right) }\left( \left\vert 
\mathbf{g}^{\prime }\right\vert ^{2}-\left\vert \mathbf{g}\right\vert
^{2}\right) -\Delta I_{\#}\right)  \notag \\
&=&\frac{\left( m_{\alpha }+m_{\beta }\right) ^{2}}{\left( m_{\alpha
}-m_{\beta }\right) ^{2}}I^{\delta ^{\left( \alpha \right) }/2-1}\left(
I^{\prime }\right) ^{\delta ^{\left( \beta \right) }/2-1}\widehat{\sigma }%
_{\alpha \beta }\mathbf{1}_{\left\vert \mathbf{g}\right\vert ^{2}>2\widehat{%
\Delta }_{\alpha \beta }I_{\#}}\frac{\left\vert \widehat{\mathbf{g}}%
\right\vert }{\left\vert \mathbf{g}^{\prime }\right\vert \left\vert 
\overline{\mathbf{g}}\right\vert }\delta _{3}\left( \mathbf{g}_{\alpha \beta
}-\mathbf{g}_{\alpha \beta }^{\prime }\right)  \notag \\
&&\times \delta _{1}\left( \left\vert \mathbf{g}^{\prime }\right\vert -\sqrt{%
\left\vert \mathbf{g}\right\vert ^{2}-2\widehat{\Delta }_{\alpha \beta
}I_{\#}}\right) \text{,}  \label{exp0}
\end{eqnarray}%
Then, by a change of variables $\left\{ \boldsymbol{\xi }^{\prime },%
\boldsymbol{\xi }_{\ast }^{\prime }\right\} \rightarrow \left\{ \!\left\vert 
\mathbf{g}^{\prime }\right\vert ,\boldsymbol{\omega }=\dfrac{\mathbf{g}%
^{\prime }}{\left\vert \mathbf{g}^{\prime }\right\vert },\mathbf{g}_{\alpha
\beta }^{\prime }=\dfrac{m_{\alpha }\boldsymbol{\xi }^{\prime }-m_{\beta }%
\boldsymbol{\xi }_{\ast }^{\prime }}{m_{\alpha }-m_{\beta }}\!\right\} $,
with $\mathbf{g}^{\prime }=\boldsymbol{\xi }^{\prime }-\boldsymbol{\xi }%
_{\ast }^{\prime }$, where%
\begin{equation*}
d\boldsymbol{\xi }^{\prime }d\boldsymbol{\xi }_{\ast }^{\prime }=d\mathbf{g}%
^{\prime }d\mathbf{g}_{\alpha \beta }^{\prime }=\left\vert \mathbf{g}%
^{\prime }\right\vert ^{2}d\left\vert \mathbf{g}^{\prime }\right\vert d%
\mathbf{g}_{\alpha \beta }^{\prime }d\boldsymbol{\omega }\text{,}
\end{equation*}%
the expression $\left( \ref{k1}\right) $ of $k_{\alpha \beta 2}^{\left(
\beta \right) }$ may be transformed to%
\begin{eqnarray}
&&k_{\alpha \beta 2}^{\left( \beta \right) }(\boldsymbol{\xi },\boldsymbol{%
\xi }_{\ast },I,I_{\ast })  \notag \\
&=&\int_{\left( \mathbb{R}^{3}\times \mathbb{R}_{+}\right) ^{2}}\frac{%
w_{\alpha \beta }(\boldsymbol{\xi },\boldsymbol{\xi }_{\ast }^{\prime
},I,I_{\ast }^{\prime }\left\vert \boldsymbol{\xi }^{\prime },\boldsymbol{%
\xi }_{\ast },I^{\prime },I_{\ast }\right. )}{\left( M_{\alpha }M_{\beta
\ast }\right) ^{1/2}}\,d\boldsymbol{\xi }^{\prime }d\boldsymbol{\xi }_{\ast
}^{\prime }dI^{\prime }dI_{\ast }^{\prime }  \notag \\
&=&\int_{\mathbb{R}^{3}\times \mathbb{S}^{2}\times \mathbb{R}%
_{+}^{3}}\left\vert \mathbf{g}^{\prime }\right\vert \left( M_{\alpha
}^{\prime }M_{\beta \ast }^{\prime }\right) ^{1/2}W_{\alpha \beta }(%
\boldsymbol{\xi },\boldsymbol{\xi }_{\ast }^{\prime },I,I_{\ast }^{\prime
}\left\vert \boldsymbol{\xi }^{\prime },\boldsymbol{\xi }_{\ast },I^{\prime
},I_{\ast }\right. )\left\vert \mathbf{g}^{\prime }\right\vert  \notag \\
&&\times \mathbf{1}_{\left\vert \mathbf{g}\right\vert ^{2}>2\widehat{\Delta }%
_{\alpha \beta }I_{\#}}\,dI^{\prime }dI_{\ast }^{\prime }d\left\vert \mathbf{%
g}^{\prime }\right\vert d\mathbf{g}_{\alpha \beta }^{\prime }d\boldsymbol{%
\omega }  \notag \\
&=&\frac{\left( m_{\alpha }+m_{\beta }\right) ^{2}}{\left( m_{\alpha
}-m_{\beta }\right) ^{2}}\int_{\mathbb{S}^{2}\times \mathbb{R}%
_{+}^{2}}\Sigma _{\alpha \beta }\frac{\left\vert \widehat{\mathbf{g}}%
\right\vert \left\vert \mathbf{g}^{\prime }\right\vert }{\left\vert 
\overline{\mathbf{g}}\right\vert }\frac{I^{\delta ^{\left( \alpha \right)
}/4-1/2}\left( I_{\ast }^{\prime }\right) ^{\delta ^{\left( \beta \right)
}/4-1/2}}{\left( I^{\prime }\right) ^{\delta ^{\left( \alpha \right)
}/4-1/2}I_{\ast }^{\delta ^{\left( \beta \right) }/4-1/2}}\widehat{\sigma }%
_{\alpha \beta }  \notag \\
&&\times \mathbf{1}_{\left\vert \mathbf{g}\right\vert ^{2}>2\widehat{\Delta }%
_{\alpha \beta }I_{\#}}\,dI^{\prime }dI_{\ast }^{\prime }d\boldsymbol{\omega 
}\text{.}  \label{exp1}
\end{eqnarray}%
However, also%
\begin{eqnarray}
&&k_{\alpha \beta 2}^{\left( \beta \right) }(\boldsymbol{\xi },\boldsymbol{%
\xi }_{\ast },I,I_{\ast })  \notag \\
&=&\int_{\mathbb{S}^{2}\times \mathbb{R}_{+}^{2}}\left\vert \widehat{\mathbf{%
g}}\right\vert \left( M_{\alpha }^{\prime }M_{\beta \ast }^{\prime }\right)
^{1/2}\frac{I^{\delta ^{\left( \alpha \right) }/4-1/2}\left( I_{\ast
}^{\prime }\right) ^{\delta ^{\left( \beta \right) }/4-1/2}}{\left(
I^{\prime }\right) ^{\delta ^{\left( \alpha \right) }/4-1/2}I_{\ast
}^{\delta ^{\left( \beta \right) }/4-1/2}}\widehat{\sigma }_{\alpha \beta }%
\notag \\ 
&\times& \mathbf{1}_{\mu_{\alpha \beta }\left\vert \widehat{\mathbf{g}}\right\vert ^{2}>2%
\Delta I_{\#}}d\widehat{\boldsymbol{\omega }}dI^{\prime
}dI_{\ast }^{\prime }\text{, with }\widehat{\boldsymbol{\omega }}=\frac{\overline{\mathbf{g}}}{%
\left\vert \overline{\mathbf{g}}\right\vert }\text{,}  \label{exp2}
\end{eqnarray}%
and, furthermore,%
\begin{eqnarray}
&&k_{\alpha \beta 2}^{\left( \beta \right) }(\boldsymbol{\xi },\boldsymbol{%
\xi }_{\ast },I,I_{\ast })  \notag \\
&=&\int_{\mathbb{S}^{2}\times \mathbb{R}_{+}^{2}}\left\vert \overline{%
\mathbf{g}}\right\vert \left( M_{\alpha }^{\prime }M_{\beta \ast }^{\prime
}\right) ^{1/2}\frac{\left( I^{\prime }\right) ^{\delta ^{\left( \alpha
\right) }/4-1/2}I_{\ast }^{\delta ^{\left( \beta \right) }/4-1/2}}{I^{\delta
^{\left( \alpha \right) }/4-1/2}\left( I_{\ast }^{\prime }\right) ^{\delta
^{\left( \beta \right) }/4-1/2}}\widehat{\sigma }_{\alpha \beta }^{\prime }%
\notag \\ 
&\times& \mathbf{1}_{\mu_{\alpha \beta }\left\vert \overline{\mathbf{g}}\right\vert ^{2}>-2%
\Delta I_{\#}}d\overline{\boldsymbol{\omega }}dI^{\prime
}dI_{\ast }^{\prime }\text{, with}  \notag \\
&&\widehat{\sigma }_{\alpha \beta }^{\prime }=\sigma _{\alpha
\beta }\left( \left\vert \overline{\mathbf{g}}\right\vert ,\frac{\widehat{%
\mathbf{g}}\cdot \overline{\mathbf{g}}}{\left\vert \widehat{\mathbf{g}}%
\right\vert \left\vert \overline{\mathbf{g}}\right\vert },I^{\prime
},I_{\ast },I,I_{\ast }^{\prime }\right) \text{ and }\overline{\boldsymbol{%
\omega }}=\frac{\widehat{\mathbf{g}}}{\left\vert \widehat{\mathbf{g}}%
\right\vert }\text{.}  \label{exp3}
\end{eqnarray}

By straightforward calculations, with $\boldsymbol{\omega }=\mathbf{g}%
^{\prime }/\left\vert \mathbf{g}^{\prime }\right\vert $, 
\begin{equation*}
\left\{ 
\begin{array}{l}
\boldsymbol{\xi }^{\prime }=\mathbf{g}_{\alpha \beta }^{\prime }-\dfrac{%
m_{\beta }}{m_{\alpha }-m_{\beta }}\mathbf{g}^{\prime }=\mathbf{g}_{\alpha
\beta }-\dfrac{m_{\beta }\boldsymbol{\omega }}{m_{\alpha }-m_{\beta }}\sqrt{%
\left\vert \mathbf{g}\right\vert ^{2}-2\widehat{\Delta }_{\alpha \beta
}I_{\#}} \\ 
\boldsymbol{\xi }_{\ast }^{\prime }=\mathbf{g}_{\alpha \beta }^{\prime }-%
\dfrac{m_{\alpha }}{m_{\alpha }-m_{\beta }}\mathbf{g}^{\prime }=\mathbf{g}%
_{\alpha \beta }-\dfrac{m_{\alpha }\boldsymbol{\omega }}{m_{\alpha
}-m_{\beta }}\sqrt{\left\vert \mathbf{g}\right\vert ^{2}-2\widehat{\Delta }%
_{\alpha \beta }I_{\#}}%
\end{array}%
\right. \text{.}
\end{equation*}%
It follows, again by straightforward calculations, that%
\begin{eqnarray}
&&m_{\alpha }\left\vert \boldsymbol{\xi }^{\prime }\right\vert ^{2}+m_{\beta
}\left\vert \boldsymbol{\xi }_{\ast }^{\prime }\right\vert ^{2}  \notag \\
&=&\left( m_{\alpha }+m_{\beta }\right) \left( \left\vert \mathbf{g}_{\alpha
\beta }^{\prime }\right\vert ^{2}+\dfrac{m_{\alpha }m_{\beta }}{\left(
m_{\alpha }-m_{\beta }\right) ^{2}}\left\vert \mathbf{g}^{\prime
}\right\vert ^{2}\right) -\dfrac{4m_{\alpha }m_{\beta }}{m_{\alpha
}-m_{\beta }}\mathbf{g}_{\alpha \beta }^{\prime }\cdot \mathbf{g}^{\prime } 
\notag \\
&=&\left( \sqrt{m_{\alpha }}-\sqrt{m_{\beta }}\right) ^{2}\left( \left\vert 
\mathbf{g}_{\alpha \beta }^{\prime }\right\vert ^{2}+\dfrac{m_{\alpha
}m_{\beta }}{\left( m_{\alpha }-m_{\beta }\right) ^{2}}\left\vert \mathbf{g}%
^{\prime }\right\vert ^{2}\right)  \notag \\
&&+2\sqrt{m_{\alpha }m_{\beta }}\left( \mathbf{g}_{\alpha \beta }^{\prime }-%
\dfrac{\sqrt{m_{\alpha }m_{\beta }}}{m_{\alpha }-m_{\beta }}\mathbf{g}%
^{\prime }\right) ^{2}  \notag \\
&\geq &\left( \sqrt{m_{\alpha }}-\sqrt{m_{\beta }}\right) ^{2}\left\vert 
\mathbf{g}_{\alpha \beta }^{\prime }\right\vert ^{2}+\dfrac{m_{\alpha
}m_{\beta }}{\left( \sqrt{m_{\alpha }}+\sqrt{m_{\beta }}\right) ^{2}}%
\left\vert \mathbf{g}^{\prime }\right\vert ^{2}  \notag \\
&=&\left( \sqrt{m_{\alpha }}-\sqrt{m_{\beta }}\right) ^{2}\left\vert \mathbf{%
g}_{\alpha \beta }\right\vert ^{2}+\dfrac{m_{\alpha }m_{\beta }}{\left( 
\sqrt{m_{\alpha }}+\sqrt{m_{\beta }}\right) ^{2}}\left( \left\vert \mathbf{g}%
\right\vert ^{2}-2\widehat{\Delta }_{\alpha \beta }I_{\#}\right) \text{.}
\label{ineq1}
\end{eqnarray}

Denote $\widehat{\mathcal{E}}_{\alpha \beta }=\widehat{\mathcal{E}}_{\beta
\alpha }^{\ast }=1$ if $\alpha \in \left\{ 1,...,s_{0}\right\} $, while if $\alpha \in \left\{ s_{0}+1,...,s\right\} $ and $%
\beta \in \left\{ 1,...,s_{0}\right\} $, then $\widehat{%
\mathcal{E}}_{\beta \alpha }^{\ast }=\mu_{\alpha \beta }\left\vert \widehat{\mathbf{g}}\right\vert
^{2}/2+I_{\ast }^{\prime }=\mu_{\alpha \beta }\left\vert \overline{\mathbf{g}}\right\vert
^{2}/2+I_{\ast }$ and $\widehat{\mathcal{E}}_{\alpha \beta }=\mu_{\alpha \beta }\left\vert \widehat{%
\mathbf{g}}\right\vert ^{2}/2+I=\mu_{\alpha \beta }\left\vert \overline{\mathbf{g}}\right\vert
^{2}/2+I^{\prime }$, and if $\left\{ \alpha ,\beta \right\} \subset \left\{
s_{0}+1,...,s\right\}$, then $\widehat{\mathcal{E}}%
_{\alpha \beta }=\widehat{\mathcal{E}}_{\alpha \beta }^{\ast }=\mu_{\alpha \beta }\left\vert 
\widehat{\mathbf{g}}\right\vert ^{2}/2+I+I_{\ast }^{\prime }=\mu_{\alpha \beta }\left\vert \overline{%
\mathbf{g}}\right\vert ^{2}/2+I^{\prime }+I_{\ast }$. 

Applying the Cauchy-Schwarz inequality, we obtain
that%
\begin{eqnarray}
&&\left( \int_{\mathbb{R}_{+}^{2}}\left( M_{\alpha }^{\prime }M_{\beta \ast
}^{\prime }\right) ^{1/4}\frac{\left( II^{\prime }\right) ^{\delta ^{\left(
\alpha \right) }/4-1/2}\left( I_{\ast }I_{\ast }^{\prime }\right) ^{\delta
^{\left( \beta \right) }/4-1/2}}{\widehat{\mathcal{E}}_{\alpha \beta
}^{\delta ^{\left( \alpha \right) }/2}\left( \widehat{\mathcal{E}}_{\alpha
\beta }^{\ast }\right) ^{\delta ^{\left( \beta \right) }/2}}e^{-\left(
I^{\prime }+I_{\ast }^{\prime }\right) /4}dI^{\prime }dI_{\ast }^{\prime
}\right) ^{2}  \notag \\
&\leq &\int_{\mathbb{R}_{+}^{2}}\left( M_{\alpha }^{\prime }M_{\beta \ast
}^{\prime }\right) ^{1/2}\frac{\left( II^{\prime }\right) ^{\delta ^{\left(
\alpha \right) }/2-1}\left( I_{\ast }I_{\ast }^{\prime }\right) ^{\delta
^{\left( \beta \right) }/2-1}}{\widehat{\mathcal{E}}_{\alpha \beta }^{\delta
^{\left( \alpha \right) }}\left( \widehat{\mathcal{E}}_{\alpha \beta }^{\ast
}\right) ^{\delta ^{\left( \beta \right) }}}dI^{\prime }dI_{\ast }^{\prime
}\int_{\mathbb{R}_{+}^{2}}e^{-\left( I^{\prime }+I_{\ast }^{\prime }\right)
/2}dI^{\prime }dI_{\ast }^{\prime }  \notag \\
&\leq &4\int_{\mathbb{R}_{+}^{2}}\frac{\left( M_{\alpha }^{\prime }M_{\beta
\ast }^{\prime }\right) ^{1/2}}{\widehat{\mathcal{E}}_{\alpha \beta
}^{2}\left( \widehat{\mathcal{E}}_{\alpha \beta }^{\ast }\right) ^{2}}%
dI^{\prime }dI_{\ast }^{\prime }\text{,}  \label{ineq1a}
\end{eqnarray}%
since 
\begin{equation*}
\widehat{\mathcal{E}}_{\alpha \beta }^{\delta ^{\left( \alpha \right)
}-2}\left( \widehat{\mathcal{E}}_{\alpha \beta }^{\ast }\right) ^{\delta
^{\left( \beta \right) }-2}\geq \left( II^{\prime }\right) ^{\delta ^{\left(
\alpha \right) }/2-1}\left( I_{\ast }I_{\ast }^{\prime }\right) ^{\delta
^{\left( \beta \right) }/2-1}\text{.}
\end{equation*}

Note that $\max \left( \left\vert \widehat{\mathbf{g}}\right\vert
,\left\vert \overline{\mathbf{g}}\right\vert \right) \geq \left\vert \mathbf{%
g}\right\vert $, cf. Figure \ref{fig3}.

\textbf{H1.} If $\min \left( \left\vert \widehat{\mathbf{g}}\right\vert
,\left\vert \overline{\mathbf{g}}\right\vert \right) \geq 1$, then $%
\left\vert \widehat{\mathbf{g}}\right\vert \left\vert \overline{\mathbf{g}}%
\right\vert \geq \max \left( \left\vert \widehat{\mathbf{g}}\right\vert
,\left\vert \overline{\mathbf{g}}\right\vert \right) \geq \left\vert \mathbf{%
g}\right\vert $, and%
\begin{eqnarray*}
\frac{\left\vert \widehat{\mathbf{g}}\right\vert \left\vert \overline{%
\mathbf{g}}\right\vert +\left( \left\vert \widehat{\mathbf{g}}\right\vert
\left\vert \overline{\mathbf{g}}\right\vert \right) ^{\gamma /2}}{\left\vert 
\widehat{\mathbf{g}}\right\vert ^{2}} &=&\frac{\left\vert \overline{\mathbf{g%
}}\right\vert }{\left\vert \widehat{\mathbf{g}}\right\vert }\left( 1+\frac{1%
}{\left( \left\vert \widehat{\mathbf{g}}\right\vert \left\vert \overline{%
\mathbf{g}}\right\vert \right) ^{1-\gamma /2}}\right) \\
&\leq &\frac{\left\vert \overline{\mathbf{g}}\right\vert }{\left\vert 
\widehat{\mathbf{g}}\right\vert }\left( 1+\frac{1}{\left\vert \mathbf{g}%
\right\vert ^{1-\gamma /2}}\right) \leq \frac{\left\vert \overline{\mathbf{g}%
}\right\vert }{\left\vert \widehat{\mathbf{g}}\right\vert }\left( 1+\frac{1}{%
\left\vert \mathbf{g}\right\vert }\right) \text{.}
\end{eqnarray*}%
Note that $\left\vert \mathbf{g}^{\prime }\right\vert ^{2}\left( M_{\alpha
}^{\prime }M_{\beta \ast }^{\prime }\right) ^{1/4}\leq Ce^{-\left( I^{\prime
}+I_{\ast }^{\prime }\right) /4}$, since $x^{2}e^{-ax^{2}}\leq 1/a$ for any
positive number $a>0$. Hence, by expressions $\left( \ref{exp0}\right) $-$%
\left( \ref{exp1}\right) $, assumption $\left( \ref{est1}\right) $, and
inequalites $\left( \ref{ineq1}\right) $-$\left( \ref{ineq1a}\right) $, one
may obtain the following bound%
\begin{equation}
\left( k_{\alpha \beta 2}^{\left( \beta \right) }(\boldsymbol{\xi },%
\boldsymbol{\xi }_{\ast },I,I_{\ast })\right) ^{2}\leq C\left( 1+\frac{1}{%
\left\vert \mathbf{g}\right\vert ^{2}}\right) \int_{\mathbb{R}_{+}^{2}}\frac{%
\left( M_{\alpha }^{\prime }M_{\beta \ast }^{\prime }\right) ^{1/2}}{%
\widehat{\mathcal{E}}_{\alpha \beta }^{2}\left( \widehat{\mathcal{E}}%
_{\alpha \beta }^{\ast }\right) ^{2}}dI^{\prime }dI_{\ast }^{\prime }\text{.}
\label{b6}
\end{equation}

\textbf{H2.} If $\min \left( \left\vert \widehat{\mathbf{g}}\right\vert
,\left\vert \overline{\mathbf{g}}\right\vert \right) <1$, then either of the
two cases below apply:

(i) $\left\vert \widehat{\mathbf{g}}%
\right\vert =\max \left( \left\vert \widehat{\mathbf{g}}\right\vert
,\left\vert \overline{\mathbf{g}}\right\vert \right) \geq \left\vert \mathbf{%
g}\right\vert $ and $\left\vert \overline{\mathbf{g}}\right\vert =\min
\left( \left\vert \widehat{\mathbf{g}}\right\vert ,\left\vert \overline{%
\mathbf{g}}\right\vert \right) <1$, and, hence,%
\begin{eqnarray*}
\frac{\left\vert \widehat{\mathbf{g}}\right\vert \left\vert \overline{%
\mathbf{g}}\right\vert +\left( \left\vert \widehat{\mathbf{g}}\right\vert
\left\vert \overline{\mathbf{g}}\right\vert \right) ^{\gamma /2}}{\left\vert 
\widehat{\mathbf{g}}\right\vert ^{2}} &\leq & \frac{1}{\left\vert \widehat{\mathbf{%
g}}\right\vert }\left( 1+\frac{1}{\left\vert \mathbf{g}\right\vert
^{1-\gamma /2}}\right) \\
&\leq &\frac{C}{\left\vert \widehat{\mathbf{g}}\right\vert }\left( 1+\frac{1%
}{\left\vert \mathbf{g}\right\vert }\right) \text{,}
\end{eqnarray*}%
and hence, by expression $\left( \ref{exp2}\right) $, assumption $\left( \ref%
{est1}\right) $, and inequality $\left( \ref{ineq1a}\right) $, the bound $%
\left( \ref{b6}\right) $ is again satisfied.

(ii) $\left\vert \overline{\mathbf{g}}%
\right\vert =\max \left( \left\vert \widehat{\mathbf{g}}\right\vert
,\left\vert \overline{\mathbf{g}}\right\vert \right) \geq \left\vert \mathbf{%
g}\right\vert $ and $\left\vert \widehat{\mathbf{g}}\right\vert =\min \left(
\left\vert \widehat{\mathbf{g}}\right\vert ,\left\vert \overline{\mathbf{g}}%
\right\vert \right) <1$, implying, correspondingly, that%
\begin{equation*}
\frac{\left\vert \widehat{\mathbf{g}}\right\vert \left\vert \overline{%
\mathbf{g}}\right\vert +\left( \left\vert \widehat{\mathbf{g}}\right\vert
\left\vert \overline{\mathbf{g}}\right\vert \right) ^{\gamma /2}}{\left\vert 
\overline{\mathbf{g}}\right\vert ^{2}}\leq \frac{C}{\left\vert \overline{%
\mathbf{g}}\right\vert }\left( 1+\frac{1}{\left\vert \mathbf{g}\right\vert }%
\right) \text{,}
\end{equation*}%
and hence, by expression $\left( \ref{exp3}\right) $, assumption $\left( \ref%
{est1}\right) $, and inequality $\left( \ref{ineq1a}\right) $, one may again
obtain the bound $\left( \ref{b6}\right) $.

Note that%
\begin{eqnarray}
&&\dfrac{m_{\alpha }m_{\beta }}{\left( \sqrt{m_{\alpha }}+\sqrt{m_{\beta }}%
\right) ^{2}}\widehat{\Delta }_{\alpha \beta }I_{\#}-\left( I^{\prime
}+I_{\ast }^{\prime }\right) \notag \\
&=&\dfrac{\sqrt{m_{\alpha }}-\sqrt{m_{\beta }}}{%
\sqrt{m_{\alpha }}+\sqrt{m_{\beta }}}\Delta I_{\#}-\left( I^{\prime
}+I_{\ast }^{\prime }\right) \notag \\
&=&\dfrac{\sqrt{m_{\alpha }}-\sqrt{m_{\beta }}}{%
\sqrt{m_{\alpha }}+\sqrt{m_{\beta }}}\left( I-I_{\ast }\right) -\dfrac{\sqrt{%
m_{\alpha }}I^{\prime }+\sqrt{m_{\beta }}I_{\ast }^{\prime }}{\sqrt{%
m_{\alpha }}+\sqrt{m_{\beta }}}\text{.}  \label{exp4}
\end{eqnarray}%
Moreover, 
\begin{equation}
\widehat{\mathcal{E}}_{\alpha \beta }\left( \widehat{\mathcal{E}}_{\alpha
\beta }^{\ast }\right) \geq I^{1-\zeta _{\alpha }-\pi _{\alpha }}\left(
I^{\prime }\right) ^{\zeta _{\alpha }}I_{\ast }^{1-\zeta _{\beta }-\pi
_{\beta }}\left( I_{\ast }^{\prime }\right) ^{\zeta _{\beta }}\left\vert 
\mathbf{g}\right\vert ^{2\left( \pi _{\alpha }+\pi _{\beta }\right) }\text{,}
\label{b6a}
\end{equation}%
where $0\leq \pi _{\gamma }\leq 1-\zeta _{\gamma }$, $0\leq \zeta _{\gamma
}<1/2$, with $\pi _{\gamma }=0$ if the species $a_{\gamma }$, $\gamma \in
\left\{ 1,...,s_{0}\right\} $, is monatomic, for $\gamma \in \left\{ \alpha
,\beta \right\} $.

Therefore, by inequality $\left( \ref{ineq1}\right) $, expression $\left( %
\ref{exp4}\right) $, and the bounds $\left( \ref{b6}\right) $ and $\left( %
\ref{b6a}\right) $, changing variables of integration $\left\{ \boldsymbol{%
\xi },\boldsymbol{\xi }_{\ast }\right\} \rightarrow \left\{ \mathbf{g},%
\mathbf{g}_{\alpha \beta }\right\} $, with unitary Jacobian,%
\begin{eqnarray}
&&\int_{\mathbb{R}^{3}\times \Omega }\left( k_{\alpha \beta 2}^{\left( \beta
\right) }(\boldsymbol{\xi },\boldsymbol{\xi }_{\ast },I,I_{\ast })\right)
^{2}d\boldsymbol{\xi \,}d\boldsymbol{\xi }_{\ast }  \notag \\
&\leq &C\int_{\Omega }\frac{e^{-A_{2}\left( \left\vert \mathbf{g}\right\vert
^{2}-2\widehat{\Delta }_{\alpha \beta }I_{\#}\right) }\mathbf{1}_{\left\vert 
\mathbf{g}\right\vert ^{2}>2\widehat{\Delta }_{\alpha \beta }I_{\#}}}{%
I^{2-2\zeta _{\alpha }-2\pi _{\alpha }}I_{\ast }^{2-2\zeta _{\beta }-2\pi
_{\beta }}\left\vert \mathbf{g}\right\vert ^{4\left( \pi _{\alpha }+\pi
_{\beta }\right) }}\left( 1+\frac{1}{\left\vert \mathbf{g}\right\vert ^{2}}%
\right) d\mathbf{g}  \notag \\
&&\times \int_{\mathbb{R}^{3}}e^{-\left( \sqrt{m_{\alpha }}-\sqrt{m_{\beta }}%
\right) ^{2}\left\vert \mathbf{g}_{\alpha \beta }\right\vert ^{2}/4}d\mathbf{%
g}_{\alpha \beta }  \notag \\
&&\times \int_{0}^{\infty }\left( I^{\prime }\right) ^{\delta ^{\left(
\alpha \right) }/4-1/2-2\zeta _{\alpha }}e^{-\sqrt{m_{\alpha }}I^{\prime
}/\left( 2\left( \sqrt{m_{\alpha }}+\sqrt{m_{\beta }}\right) \right)
}dI^{\prime }dI_{\ast }^{\prime }  \notag \\
&&\times \int_{0}^{\infty }\left( I_{\ast }^{\prime }\right) ^{\delta
^{\left( \beta \right) }/4-1/2-2\zeta _{\beta }}e^{-\sqrt{m_{\beta }}I_{\ast
}^{\prime }/\left( 2\left( \sqrt{m_{\alpha }}+\sqrt{m_{\beta }}\right)
\right) }dI^{\prime }dI_{\ast }^{\prime }  \notag \\
&\leq &C\int_{\Omega }\frac{e^{A_{1}\left( I-I_{\ast }\right)
-A_{2}\left\vert \mathbf{g}\right\vert ^{2}}}{I^{2-2\zeta _{\alpha }-2\pi
_{\alpha }}I_{\ast }^{2-2\zeta _{\beta }-2\pi _{\beta }}\left\vert \mathbf{g}%
\right\vert ^{4\left( \pi _{\alpha }+\pi _{\beta }\right) }}\left( 1+\frac{1%
}{\left\vert \mathbf{g}\right\vert ^{2}}\right) d\mathbf{g}\text{,}
\label{b9}
\end{eqnarray}%
with%
\begin{equation*}
A_{1}=\frac{\sqrt{m_{\alpha }}-\sqrt{m_{\beta }}}{2\left( \sqrt{m_{\alpha }}+%
\sqrt{m_{\beta }}\right) }\text{ and }A_{2}=\frac{m_{\alpha }m_{\beta }}{%
4\left( \sqrt{m_{\alpha }}+\sqrt{m_{\beta }}\right) ^{2}}>0
\end{equation*}%
for $\Omega \subseteq \mathbb{R}^{3}$, since, by a change to spherical
coordinates,%
\begin{equation}
\int_{\mathbb{R}^{3}}e^{-\left( \sqrt{m_{\alpha }}-\sqrt{m_{\beta }}\right)
^{2}\left\vert \mathbf{g}_{\alpha \beta }\right\vert ^{2}/2}d\mathbf{g}%
_{\alpha \beta }=C\int_{0}^{\infty }R^{2}e^{-R^{2}}dR=C\text{.}  \label{eq2}
\end{equation}%
Without loss of generality we can assume that $m_{\alpha }>m_{\beta }$, and
then $A_{1}>0$. On the other hand, for $\Omega \subseteq \mathbb{R}^{3}$, by
inequality $\left( \ref{ineq1}\right) $, and the bounds $\left( \ref{b6}%
\right) $ and $\left( \ref{b6a}\right) $, also%
\begin{eqnarray}
&&\int_{\mathbb{R}^{3}\times \Omega }\left( k_{\alpha \beta 2}^{\left( \beta
\right) }(\boldsymbol{\xi },\boldsymbol{\xi }_{\ast },I,I_{\ast })\right)
^{2}d\boldsymbol{\xi \,}d\boldsymbol{\xi }_{\ast }  \notag \\
&\leq &C\int_{\Omega \times \mathbb{R}_{+}^{2}}\frac{e^{-A_{2}\left(
\left\vert \mathbf{g}\right\vert ^{2}-2\widehat{\Delta }_{\alpha \beta
}I_{\#}\right) }\mathbf{1}_{\left\vert \mathbf{g}\right\vert ^{2}>2\widehat{%
\Delta }_{\alpha \beta }I_{\#}}}{I^{2-2\zeta _{\alpha }-2\pi _{\alpha
}}I_{\ast }^{2-2\zeta _{\beta }-2\pi _{\beta }}\left\vert \mathbf{g}%
\right\vert ^{4\left( \pi _{\alpha }+\pi _{\beta }\right) }}\left( 1+\frac{1%
}{\left\vert \mathbf{g}\right\vert ^{2}}\right) d\mathbf{g}  \notag \\
&&\times \left( I^{\prime }\right) ^{\delta ^{\left( \alpha \right)
}/4-1/2-2\zeta _{\alpha }}e^{-I^{\prime }/2}\left( I_{\ast }^{\prime
}\right) ^{\delta ^{\left( \beta \right) }/4-1/2-2\zeta _{\beta
}}e^{-I_{\ast }^{\prime }/2}dI^{\prime }dI_{\ast }^{\prime }  \notag \\
&&\times \int_{\mathbb{R}^{3}}e^{-\left( \sqrt{m_{\alpha }}-\sqrt{m_{\beta }}%
\right) ^{2}\left\vert \mathbf{g}_{\alpha \beta }\right\vert ^{2}/4}d\mathbf{%
g}_{\alpha \beta }  \notag \\
&\leq &C\int_{\Omega }\frac{1}{I^{2-2\zeta _{\alpha }-2\pi _{\alpha
}}I_{\ast }^{2-2\zeta _{\beta }-2\pi _{\beta }}\left\vert \mathbf{g}%
\right\vert ^{4\left( \pi _{\alpha }+\pi _{\beta }\right) }}\left( 1+\frac{1%
}{\left\vert \mathbf{g}\right\vert ^{2}}\right) d\mathbf{g}  \notag \\
&&\times \int_{0}^{\infty }\left( I^{\prime }\right) ^{\delta ^{\left(
\alpha \right) }/4-1/2-2\zeta _{\alpha }}e^{-I^{\prime }/2}dI^{\prime
}\int_{0}^{\infty }\left( I_{\ast }^{\prime }\right) ^{\delta ^{\left( \beta
\right) }/4-1/2-2\zeta _{\beta }}e^{-I_{\ast }^{\prime }/2}dI_{\ast
}^{\prime }  \notag \\
&\leq &C\int_{\Omega }\frac{1}{I^{2-2\zeta _{\alpha }-2\pi _{\alpha
}}I_{\ast }^{2-2\zeta _{\beta }-2\pi _{\beta }}\left\vert \mathbf{g}%
\right\vert ^{4\left( \pi _{\alpha }+\pi _{\beta }\right) }}\left( 1+\frac{1%
}{\left\vert \mathbf{g}\right\vert ^{2}}\right) d\mathbf{g}\text{.}
\label{b10}
\end{eqnarray}

By the bound $\left( \ref{b9}\right) $, with $\zeta _{\gamma }=4\pi _{\gamma
}=\left\{ 
\begin{array}{l}
4/9\ \text{if }\gamma \in \left\{ s_{0}+1,...,s\right\} \\ 
\ \ 0\text{ \ if }\gamma \in \left\{ 1,...,s_{0}\right\}%
\end{array}%
\right. $, for $\gamma \in \left\{ \alpha ,\beta \right\} $,%
\begin{eqnarray*}
&&\int_{\left( \mathbb{R}^{3}\times \left[ 0,2\right] \right) ^{2}}\left(
k_{\alpha \beta 2}^{\left( \beta \right) }(\boldsymbol{\xi },\boldsymbol{\xi 
}_{\ast },I,I_{\ast })\right) ^{2}d\boldsymbol{\xi \,}d\boldsymbol{\xi }%
_{\ast }dI\boldsymbol{\,}dI_{\ast } \\
&\leq &C\int_{\mathbb{R}^{3}}\frac{\left( 1+\left\vert \mathbf{g}\right\vert
^{2}\right) e^{-A_{2}\left\vert \mathbf{g}\right\vert ^{2}}}{\left\vert 
\mathbf{g}\right\vert ^{2+4\left( \pi _{\alpha }+\pi _{\beta }\right) }}d%
\mathbf{g}\int_{0}^{2}I^{2\left( \zeta _{\alpha }+\pi _{\alpha }-1\right) }%
\boldsymbol{\,}dI\int_{0}^{2}I_{\ast }^{2\left( \zeta _{\beta }+\pi _{\beta
}-1\right) }\boldsymbol{\,}dI_{\ast } \\
&\leq &C\int_{\mathbb{R}^{3}}\frac{1+R^{2}}{R^{4\left( \pi _{\alpha }+\pi
_{\beta }\right) }}e^{-A_{2}R^{2}}dR=C\text{.}
\end{eqnarray*}%
Now let $I_{\ast }\geq I$, and assume that $I_{\ast }>2$ - otherwise $\left(
I,I_{\ast }\right) \in \left[ 0,2\right] ^{2}$. Then, by the bound $\left( %
\ref{b9}\right) $, with $\zeta _{\alpha }=4\pi _{\alpha }=\left\{ 
\begin{array}{l}
2/5\text{ if }\alpha \in \left\{ s_{0}+1,...,s\right\} \text{ and }I\leq 2
\\ 
\ \ 0\text{ \ if }\alpha \in \left\{ 1,...,s_{0}\right\} \text{, or, }I>2%
\end{array}%
\right. $ and $\zeta _{\beta }=10\pi _{\beta }=10/21$, 
\begin{eqnarray*}
&&\int_{2}^{\infty }\int_{0}^{I_{\ast }}\int_{\left( \mathbb{R}^{3}\right)
^{2}}\left( k_{\alpha \beta 2}^{\left( \beta \right) }(\boldsymbol{\xi },%
\boldsymbol{\xi }_{\ast },I,I_{\ast })\right) ^{2}d\boldsymbol{\xi \,}d%
\boldsymbol{\xi }_{\ast }dI\boldsymbol{\,}dI_{\ast } \\
&\leq &C\int_{0}^{\infty }\frac{1}{I^{2-2\zeta _{\alpha }-2\pi _{\alpha }}}%
dI\int_{\mathbb{R}^{3}}\frac{\left( 1+\left\vert \mathbf{g}\right\vert
^{2}\right) e^{-A_{2}\left\vert \mathbf{g}\right\vert ^{2}}}{\left\vert 
\mathbf{g}\right\vert ^{2+4\left( \pi _{\alpha }+\pi _{\beta }\right) }}d%
\mathbf{g} \\
&&\times \int_{0}^{\infty }\frac{e^{-A_{1}\left( I_{\ast }-I\right) }}{%
\left( I_{\ast }-I\right) ^{2-2\zeta _{\beta }-2\pi _{\beta }}}d\left(
I_{\ast }-I\right) \\
&\leq &C\int_{0}^{\infty }\frac{1}{I^{2-2\zeta _{\alpha }-2\pi _{\alpha }}}%
dI\int_{0}^{\infty }\frac{e^{-A_{2}R^{2}}}{R^{4\left( \pi _{\alpha }+\pi
_{\beta }\right) }}\left( 1+R^{2}\right) dR \\
&\leq &C\int_{0}^{2}\left( 1+I^{-4/5}\right) dI+\int_{2}^{\infty }I^{-2}dI=C%
\text{.}
\end{eqnarray*}

On the other hand, if $I\geq I_{\ast }$, then assume that $I>2$ - otherwise $%
\left( I,I_{\ast }\right) \in \left[ 0,2\right] ^{2}$. Then, by the bound $%
\left( \ref{b10}\right) $, with $\pi _{\alpha }=\zeta _{\alpha }=0$ and $%
\zeta _{\beta }=2\pi _{\beta }=\left\{ 
\begin{array}{l}
2/5\ \text{if }\beta \in \left\{ s_{0}+1,...,s\right\} \\ 
\ \ 0\text{ \ if }\beta \in \left\{ 1,...,s_{0}\right\}%
\end{array}%
\right. $, 
\begin{eqnarray*}
&&\int_{2}^{\infty }\int_{0}^{I}\int_{\left\vert \mathbf{g}\right\vert \leq
1}\left( k_{\alpha \beta 2}^{\left( \beta \right) }(\boldsymbol{\xi },%
\boldsymbol{\xi }_{\ast },I,I_{\ast })\right) ^{2}d\boldsymbol{\xi \,}d%
\boldsymbol{\xi }_{\ast }dI_{\ast }\boldsymbol{\,}dI \\
&\leq &C\int_{2}^{\infty }I^{-2}\int_{0}^{I}\frac{1}{I_{\ast }^{2-2\zeta
_{\beta }-2\pi _{\beta }}}\mu _{\beta }\left( I_{\ast }\right) dI_{\ast
}dI\int_{\left\vert \mathbf{g}\right\vert \leq 1}\frac{1}{\left\vert \mathbf{%
g}\right\vert ^{2+4\pi _{\beta }}}d\mathbf{g} \\
&\leq &C\int_{2}^{\infty }I^{-2}I^{1/5}dI\int_{\left\vert \mathbf{g}%
\right\vert \leq 1}\left\vert \mathbf{g}\right\vert ^{-14/5}d\mathbf{g} \\
&=&C\int_{2}^{\infty }I^{-9/5}dI\int_{0}^{1}R^{-4/5}dR=C\text{.}
\end{eqnarray*}%
Moreover, assuming that the species $a_{\beta }$, $\beta \in \left\{
s_{0}+1,...,s\right\} $, is polyatomic, then, by the bound $\left( \ref{b10}%
\right) $, with $\zeta _{\alpha }=\zeta _{\beta }=0$ and $1-2\pi _{\alpha
}=\pi _{\beta }=3/5$,%
\begin{eqnarray*}
&&\int_{2}^{\infty }\int_{0}^{I}\int_{\left\vert \mathbf{g}\right\vert \geq
1}\left( k_{\alpha \beta 2}^{\left( \beta \right) }(\boldsymbol{\xi },%
\boldsymbol{\xi }_{\ast },I,I_{\ast })\right) ^{2}d\boldsymbol{\xi \,}d%
\boldsymbol{\xi }_{\ast }dI_{\ast }\boldsymbol{\,}dI \\
&\leq &C\int_{2}^{\infty }\frac{1}{I^{8/5}}\int_{0}^{I}\frac{1}{I_{\ast
}^{4/5}}dI_{\ast }dI\int_{\left\vert \mathbf{g}\right\vert \geq 1}\frac{1}{%
\left\vert \mathbf{g}\right\vert ^{4\left( \pi _{\alpha }+\pi _{\beta
}\right) }}d\mathbf{g} \\
&\leq &C\int_{2}^{\infty }\frac{1}{I^{7/5}}dI\int_{\left\vert \mathbf{g}%
\right\vert \geq 1}\frac{1}{\left\vert \mathbf{g}\right\vert ^{16/5}}d%
\mathbf{g}=C\int_{1}^{\infty }R^{-6/5}dR=C\text{.}
\end{eqnarray*}

Assume now that the species $a_{\beta }$, $\beta \in \left\{
1,...,s_{0}\right\} $, is monatomic (i.e. with $I_{\ast }=I_{\ast }^{\prime
}=1$). If $A_{1}\left( I-1\right) \leq A_{2}\left\vert \mathbf{g}\right\vert
^{2}$, by the bound $\left( \ref{b9}\right) $, with $\zeta _{\alpha }=\zeta
_{\beta }=\pi _{\beta }=1-\pi _{\alpha }=0$,%
\begin{eqnarray*}
&&\int_{2}^{\infty }\int_{0}^{I}\int_{\substack{ \left\vert \mathbf{g}%
\right\vert \geq 1  \\ A_{1}\left( I-1\right) \leq A_{2}\left\vert \mathbf{g}%
\right\vert ^{2}}}\left( k_{\alpha \beta 2}^{\left( \beta \right) }(%
\boldsymbol{\xi },\boldsymbol{\xi }_{\ast },I,I_{\ast })\right) ^{2}d%
\boldsymbol{\xi \,}d\boldsymbol{\xi }_{\ast }dI_{\ast }\boldsymbol{\,}dI \\
&\leq &\int_{1}^{1+A_{2}\left\vert \mathbf{g}\right\vert
^{2}/A_{1}}\int_{\left\vert \mathbf{g}\right\vert \geq 1}\int_{\left\vert 
\mathbf{g}\right\vert \geq 1}\left( k_{\alpha \beta 2}^{\left( \beta \right)
}(\boldsymbol{\xi },\boldsymbol{\xi }_{\ast },I,1)\right) ^{2}d\boldsymbol{%
\xi \,}d\boldsymbol{\xi }_{\ast }\boldsymbol{\,}dI \\
&\leq &C\int_{\left\vert \mathbf{g}\right\vert \geq 1}\left\vert \mathbf{g}%
\right\vert ^{-4}\int_{0}^{A_{2}\left\vert \mathbf{g}\right\vert
^{2}/A_{1}}e^{A_{1}\left( I-1\right) -A_{2}\left\vert \mathbf{g}\right\vert
^{2}}d\left( I-1\right) d\mathbf{g} \\
&\leq &C\int_{1}^{\infty }R^{-2}dR=C\text{.}
\end{eqnarray*}%
On the other hand, if $A_{1}\left( I-1\right) \geq A_{2}\left\vert \mathbf{g}%
\right\vert ^{2}\geq A_{2}$, then, by inequality $\left( \ref{ineq1}\right) $
and the bounds $\left( \ref{b6}\right) $ and $\left( \ref{b6a}\right) $,
with $\zeta _{\alpha }=\zeta _{\beta }=\pi _{\beta }=1-\pi _{\alpha }=0$, 
\begin{eqnarray*}
& & \left( k_{\alpha \beta 2}^{\left( \beta \right) }(\boldsymbol{\xi },%
\boldsymbol{\xi }_{\ast },I,I_{\ast })\right) ^{2}  \\
&\leq & C\left( 1+\frac{1}{%
\left\vert \mathbf{g}\right\vert ^{2}}\right) \int_{\mathbb{R}_{+}^{2}}\frac{%
\left( M_{\alpha }^{\prime }M_{\beta \ast }^{\prime }\right) ^{1/2}}{%
\left\vert \mathbf{g}\right\vert ^{4}}dI^{\prime }dI_{\ast }^{\prime } \\
&\leq &C\frac{e^{-\left( \sqrt{m_{\alpha }}-\sqrt{m_{\beta }}\right)
^{2}\left\vert \mathbf{g}_{\alpha \beta }\right\vert ^{2}/4}}{\left\vert 
\mathbf{g}\right\vert ^{4}}\int_{0}^{\infty }\left( I^{\prime }\right)
^{\delta ^{\left( \alpha \right) }/2-1}e^{-I^{\prime }/2}dI^{\prime }
\end{eqnarray*}%
Noting that%
\begin{equation*}
I^{\prime }=I_{\ast }^{\prime }+I-I_{\ast }+\dfrac{A_{2}}{A_{1}}\left(
\left\vert \mathbf{g}^{\prime }\right\vert ^{2}-\left\vert \mathbf{g}%
\right\vert ^{2}\right) \geq I-\dfrac{A_{2}}{A_{1}}\left\vert \mathbf{g}%
\right\vert ^{2}\geq 1\text{,}
\end{equation*}%
and changing variables of integration $\left\{ \boldsymbol{\xi },\boldsymbol{%
\xi }_{\ast }\right\} \rightarrow \left\{ \mathbf{g},\mathbf{g}_{\alpha
\beta }\right\} $, with unitary Jacobian, we obtain the bound%
\begin{eqnarray*}
&&\int_{2}^{\infty }\int_{0}^{I}\int_{\substack{ \left\vert \mathbf{g}%
\right\vert \geq 1  \\ A_{1}\left( I-1\right) \geq A_{2}\left\vert \mathbf{g}%
\right\vert ^{2}}}\left( k_{\alpha \beta 2}^{\left( \beta \right) }(%
\boldsymbol{\xi },\boldsymbol{\xi }_{\ast },I,I_{\ast })\right) ^{2}d%
\boldsymbol{\xi \,}d\boldsymbol{\xi }_{\ast }dI_{\ast }\boldsymbol{\,}dI \\
&=&\int_{2}^{\infty }\int_{\substack{ \left\vert \mathbf{g}\right\vert \geq
1  \\ A_{1}\left( I-1\right) \geq A_{2}\left\vert \mathbf{g}\right\vert ^{2} 
}}\left( k_{\alpha \beta 2}^{\left( \beta \right) }(\boldsymbol{\xi },%
\boldsymbol{\xi }_{\ast },I,1)\right) ^{2}d\boldsymbol{\xi \,}d\boldsymbol{%
\xi }_{\ast }\boldsymbol{\,}dI \\
&\leq &C\int_{1}^{\infty }\left\vert \mathbf{g}\right\vert
^{-4}\int_{A_{2}\left\vert \mathbf{g}\right\vert ^{2}/A_{1}}^{\infty
}e^{-\left( I-A_{2}\left\vert \mathbf{g}\right\vert ^{2}/A_{1}\right) /4}dId%
\mathbf{g} \\
&&\times \int_{\mathbb{R}^{3}}e^{-\left( \sqrt{m_{\alpha }}-\sqrt{m_{\beta }}%
\right) ^{2}\left\vert \mathbf{g}_{\alpha \beta }\right\vert ^{2}/4}d\mathbf{%
g}_{\alpha \beta }\int_{0}^{\infty }\left( I^{\prime }\right) ^{\delta
^{\left( \alpha \right) }/2-1}e^{-I^{\prime }/4}dI^{\prime } \\
&\leq &C\int_{1}^{\infty }R^{-2}dR=C\text{.}
\end{eqnarray*}

Concluding, $\left( k_{\alpha \beta 2}^{\left( \beta \right) }(\boldsymbol{%
\xi },\boldsymbol{\xi }_{\ast },I,I_{\ast })\right) ^{2}\in L^{2}\left( d%
\boldsymbol{\xi \,}\,d\boldsymbol{\xi }_{\ast }dI\,dI_{\ast }\right) $,
implying that%
\begin{equation*}
K_{\alpha \beta 2}=\int_{\mathbb{R}^{3}}k_{\alpha \beta 2}^{\left( \beta
\right) }(\boldsymbol{\xi },\boldsymbol{\xi }_{\ast },I,I_{\ast })h_{\beta
\ast }\,d\boldsymbol{\xi }_{\ast }dI_{\ast }
\end{equation*}%
are Hilbert-Schmidt integral operators, and as such continuous and compact
on $L^{2}\left( d\mathbf{Z}_{\alpha }\right) $ \cite[Theorem 7.83]%
{RenardyRogers}, for $\left\{ \alpha ,\beta \right\} \subseteq \left\{
1,...,s\right\} $.

On the other hand, if $m_{\alpha }=m_{\beta }$, then 
\begin{eqnarray*}
&&k_{\alpha \beta 2}^{\left( \beta \right) }(\boldsymbol{\xi },\boldsymbol{%
\xi }_{\ast },I,I_{\ast })\\
&=&\int_{\left( \mathbb{R}^{3}\right) ^{\perp _{\mathbf{n}}}}4\frac{%
\left\vert \widehat{\mathbf{g}}\right\vert \left( M_{\alpha }^{\prime
}M_{\beta \ast }^{\prime }\right) ^{1/2}}{\left\vert \overline{\mathbf{g}}%
\right\vert \left\vert \mathbf{g}\right\vert } \frac{I^{\delta ^{\left(
\alpha \right) }/4-1/2}\left( I_{\ast }^{\prime }\right) ^{\delta ^{\left(
\beta \right) }/4-1/2}}{\left( I^{\prime }\right) ^{\delta ^{\left( \alpha
\right) }/4-1/2}I_{\ast }^{\delta ^{\left( \beta \right) }/4-1/2}}
\\
&\times&\mathbf{1}_{m_{\alpha }\left\vert \widehat{\mathbf{g}}\right\vert ^{2}>4I_{\#}}%
\widehat{\sigma }_{\alpha \beta }\,d\mathbf{w}\text{, with }\widehat{\mathbf{g}}=\boldsymbol{\xi }-\boldsymbol{\xi }_{\ast
}^{\prime }\text{ and }\overline{\mathbf{g}}=\boldsymbol{\xi }_{\ast }-%
\boldsymbol{\xi }.
\end{eqnarray*}%
Here, with $\mathbf{g}=\boldsymbol{\xi }-\boldsymbol{\xi }_{\ast }$ and $%
\mathbf{n}=\mathbf{g/}\left\vert \mathbf{g}\right\vert $,%
\begin{equation*}
\left\{ 
\begin{array}{l}
\boldsymbol{\xi }^{\prime }=\boldsymbol{\xi }+\mathbf{w}-\chi _{\alpha }%
\mathbf{n} \\ 
\boldsymbol{\xi }_{\ast }^{\prime }=\boldsymbol{\xi }_{\ast }+\mathbf{w}%
-\chi _{\alpha }\mathbf{n}%
\end{array}%
\right. \text{, where }\mathbf{w}\perp \mathbf{g}\text{ and }\chi _{\alpha }=%
\frac{\Delta I}{m_{\alpha }\left\vert \mathbf{g}\right\vert }\text{.}
\end{equation*}%
Then similar arguments to the ones for $k_{\alpha \beta }^{\left( \alpha
\right) }(\boldsymbol{\xi },\boldsymbol{\xi }_{\ast },I,I_{\ast })$ (with $%
m_{\alpha }=m_{\beta }$) above, can be applied.

Concluding, the operator 
\begin{equation*}
K=(K_{1},...,K_{s})=\sum\limits_{\beta =1}^{s}(K_{1\beta 2}+K_{1\beta
3}-K_{1\beta 1},...,K_{s\beta 2}+K_{s\beta 3}-K_{s\beta 1})\text{,}
\end{equation*}%
is a compact self-adjoint operator on $\mathcal{\mathfrak{h}}$.
Self-adjointness is due to the symmetry relations $\left( \ref{sa1}\right)
,\left( \ref{sa2}\right) $, cf. \cite[p.198]{Yoshida-65}.
\end{proof}

\section{Bounds on the collision frequency \label{PT2}}

This section concerns the proof of Theorem \ref{Thm2}. Note that throughout
the proof, $C$ and $C_{\alpha \beta }$ for $\left\{ \alpha ,\beta \right\}
\subseteq \left\{ 1,...,s\right\} $, will denote generic positive constants.

\begin{proof}
Noting that $\left( \ref{df1}\right) $, under assumption $\left( \ref{e1}%
\right) $, the collision frequencies $\nu _{\alpha }$ equal 
\begin{eqnarray*}
\nu _{\alpha } &=&\sum\limits_{\beta =1}^{s}\int_{\left( \mathbb{R}%
^{3}\times \mathbb{R}_{+}\right) ^{3}}\frac{M_{\beta \ast }}{I^{\delta
^{\left( \alpha \right) }/2-1}I_{\ast }^{\delta ^{\left( \beta \right) }/2-1}%
}W_{\alpha \beta }d\boldsymbol{\xi }_{\ast }d\boldsymbol{\xi }^{\prime }d%
\boldsymbol{\xi }_{\ast }^{\prime }dI_{\ast }dI^{\prime }dI_{\ast }^{\prime }
\\
&=&\sum\limits_{\beta =1}^{s}\int_{\left( \mathbb{R}^{3}\right) ^{2}\times 
\mathbb{R}_{+}^{4}x\mathbb{S}^{2}}M_{\beta \ast }\sigma _{\alpha \beta }%
\left\vert \mathbf{g}\right\vert \delta _{1}\left( \sqrt{%
\left\vert \mathbf{g}\right\vert ^{2}-\dfrac{2\Delta I}{\mu_{\alpha \beta }}}-\left\vert 
\mathbf{g}^{\prime }\right\vert \right) \\
&&\mathbf{1}_{\mu_{\alpha \beta }\left\vert \mathbf{g}\right\vert ^{2}>2\Delta I}\times \delta _{3}\left( \mathbf{G}_{\alpha \beta }-\mathbf{G}_{\alpha
\beta }^{\prime }\right) d\boldsymbol{\xi }_{\ast }d\mathbf{G}_{\alpha \beta
}^{\prime }d\left\vert \mathbf{g}^{\prime }\right\vert d\boldsymbol{\omega }%
dI_{\ast }dI^{\prime }dI_{\ast }^{\prime } \\
&=&\sum\limits_{\beta =1}^{s}C_{\alpha \beta }\int_{\mathbb{S}^{2}}d%
\boldsymbol{\omega }\int_{\mathbb{R}^{3}\times \left( \mathbb{R}_{+}\right)
^{3}}I_{\ast }^{\delta ^{\left( \beta \right) }/2-1}e^{-I_{\ast
}}e^{-m_{\beta }\left\vert \boldsymbol{\xi }_{\ast }\right\vert
^{2}/2}\sigma _{\alpha \beta } \\
&&\times \mathbf{1}_{\mu_{\alpha \beta }\left\vert \mathbf{g}\right\vert ^{2}>2\Delta I}\left\vert \mathbf{g}\right\vert d\boldsymbol{\xi }%
_{\ast }dI_{\ast }dI^{\prime }dI_{\ast }^{\prime } \\
&=&\sum\limits_{\beta =1}^{s}C_{\alpha \beta }\int_{\mathbb{R}^{3}\times
\left( \mathbb{R}_{+}\right) ^{3}}e^{-m_{\beta }\left\vert \boldsymbol{\xi }%
_{\ast }\right\vert ^{2}/2}e^{-I_{\ast }}\frac{\left( I^{\prime }\right)
^{\delta ^{\left( \alpha \right) }/2-1}\left( I_{\ast }I_{\ast }^{\prime
}\right) ^{\delta ^{\left( \beta \right) }/2-1}}{\mathcal{E}_{\alpha \beta
}^{\delta ^{\left( \alpha \right) }/2}\left( \mathcal{E}_{\alpha \beta
}^{\ast }\right) ^{\delta ^{\left( \beta \right) }/2}E_{\alpha \beta }^{\eta
/2}} \\
&&\times \mathbf{1}_{\mu_{\alpha \beta }\left\vert \mathbf{g}\right\vert ^{2}>2\Delta I}\sqrt{\left\vert \mathbf{g}\right\vert ^{2}-2%
\widetilde{\Delta }_{\alpha \beta }I}d\boldsymbol{\xi }_{\ast }dI_{\ast
}dI^{\prime }dI_{\ast }^{\prime }
\end{eqnarray*}%
for $\alpha \in \left\{ 1,...,s\right\} $.

Clearly, 
\begin{eqnarray*}
\nu _{\alpha } &\geq &C_{\alpha \alpha }\int_{\mathbb{R}^{3}\times \left( 
\mathbb{R}_{+}\right) ^{3}}\frac{\left( I_{\ast }I^{\prime }I_{\ast
}^{\prime }\right) ^{\delta ^{\left( \alpha \right) }/2-1}e^{-I_{\ast }}}{%
\mathcal{E}_{\alpha \alpha }^{\delta ^{\left( \alpha \right) }}E_{\alpha
\alpha }^{\eta /2}}\sqrt{\left\vert \mathbf{g}\right\vert ^{2}-\frac{4}{%
m_{\alpha }}\Delta I} \\
&&\times e^{-m_{\alpha }\left\vert \boldsymbol{\xi }_{\ast }\right\vert
^{2}/2}\mathbf{1}_{m_{\alpha }\left\vert \mathbf{g}\right\vert ^{2}>4\Delta
I}d\boldsymbol{\xi }_{\ast }dI_{\ast }dI^{\prime }dI_{\ast }^{\prime }\text{.%
}
\end{eqnarray*}%
Then, if the species $a_{\alpha }$, $\alpha \in \left\{ 1,...,s_{0}\right\} $%
, is monatomic%
\begin{eqnarray*}
\nu _{\alpha } &\geq &C\int_{\mathbb{R}^{3}\times \left( \mathbb{R}%
_{+}\right) ^{3}}e^{-m_{\alpha }\left\vert \boldsymbol{\xi }_{\ast
}\right\vert ^{2}/2}\left\vert \mathbf{g}\right\vert ^{1-\eta }d\boldsymbol{%
\xi }_{\ast } \\
&\geq &C\int_{\mathbb{R}^{3}\times \left( \mathbb{R}_{+}\right)
^{3}}e^{-m_{\alpha }\left\vert \boldsymbol{\xi }_{\ast }\right\vert
^{2}/2}\left( \left\vert \left\vert \boldsymbol{\xi }\right\vert -\left\vert 
\boldsymbol{\xi }_{\ast }\right\vert \right\vert ^{2}\right) ^{\left( 1-\eta
\right) /2}d\boldsymbol{\xi }_{\ast }\text{.}
\end{eqnarray*}%
while if $a_{\alpha }$, $\alpha \in \left\{ s_{0}+1,...,s\right\} $, is
polyatomic%
\begin{eqnarray*}
\nu _{\alpha } &\geq &C\int_{\mathbb{R}^{3}\times \mathbb{R}%
_{+}}\int_{I^{\prime }+I_{\ast }^{\prime }\leq E_{\alpha \alpha
}/2}e^{-m_{\alpha }\left\vert \boldsymbol{\xi }_{\ast }\right\vert ^{2}/2}%
\frac{\left( I_{\ast }I^{\prime }I_{\ast }^{\prime }\right) ^{\delta
^{\left( \alpha \right) }/2-1}e^{-I_{\ast }}}{\mathcal{E}_{\alpha \alpha
}^{\delta ^{\left( \alpha \right) }}E_{\alpha \alpha }^{\eta /2}} \\
&&\times \sqrt{E_{\alpha \alpha }-\left( I^{\prime }+I_{\ast }^{\prime
}\right) }d\boldsymbol{\xi }_{\ast }dI_{\ast }dI^{\prime }dI_{\ast }^{\prime
} \\
&\geq &C\int_{\mathbb{R}^{3}\times \mathbb{R}_{+}}\dfrac{E_{\alpha \alpha
}^{1/2}e^{-m_{\alpha }\left\vert \boldsymbol{\xi }_{\ast }\right\vert ^{2}/2}%
}{\mathcal{E}_{\alpha \alpha }^{\delta ^{\left( \alpha \right) }}E_{\alpha
\alpha }^{\eta /2}}I_{\ast }^{\delta ^{\left( \alpha \right)
}/2-1}e^{-I_{\ast }} \\
&&\times \left( \int_{0}^{E_{\alpha \alpha }/4}\left( I^{\prime }\right)
^{\delta ^{\left( \alpha \right) }/2-1}dI^{\prime }\right) ^{2}\,d%
\boldsymbol{\xi }_{\ast }dI_{\ast } \\
&=&C\int_{\mathbb{R}^{3}\times \mathbb{R}_{+}}E_{\alpha \alpha }^{\left(
1-\eta \right) /2}e^{-m_{\alpha }\left\vert \boldsymbol{\xi }_{\ast
}\right\vert ^{2}/2}I_{\ast }^{\delta /2-1}e^{-I_{\ast }}\,d\boldsymbol{\xi }%
_{\ast }dI_{\ast } \\
&\geq &C\int_{\mathbb{R}^{3}}\left( \left\vert \mathbf{g}\right\vert
^{2}+I\right) ^{\left( 1-\eta \right) /2}e^{-m_{\alpha }\left\vert 
\boldsymbol{\xi }_{\ast }\right\vert ^{2}/2}\,d\boldsymbol{\xi }_{\ast
}\int_{0}^{\infty }I_{\ast }^{\delta /2-1}e^{-I_{\ast }}\,dI_{\ast } \\
&\geq &C\int_{\mathbb{R}^{3}}\left( \left\vert \left\vert \boldsymbol{\xi }%
\right\vert -\left\vert \boldsymbol{\xi }_{\ast }\right\vert \right\vert
^{2}+I\right) ^{\left( 1-\eta \right) /2}e^{-m_{\alpha }\left\vert 
\boldsymbol{\xi }_{\ast }\right\vert ^{2}/2}\,d\boldsymbol{\xi }_{\ast }%
\text{.}
\end{eqnarray*}%
Now it follows that 
\begin{eqnarray*}
\nu _{\alpha } &\geq &C\int_{\mathbb{R}^{3}}\left( \left( \left\vert 
\boldsymbol{\xi }\right\vert -\left\vert \boldsymbol{\xi }_{\ast
}\right\vert \right) ^{2}+I\right) ^{\left( 1-\eta \right) /2}e^{-m_{\alpha
}\left\vert \boldsymbol{\xi }_{\ast }\right\vert ^{2}/2} \\
&&\times \left( \mathbf{1}_{\left\vert \boldsymbol{\xi }_{\ast }\right\vert
\leq 1/2}\mathbf{1}_{\left\vert \boldsymbol{\xi }\right\vert \geq 1}+\mathbf{%
1}_{\left\vert \boldsymbol{\xi }_{\ast }\right\vert \geq 2}\mathbf{1}%
_{\left\vert \boldsymbol{\xi }\right\vert \leq 1}\right) \,d\boldsymbol{\xi }%
_{\ast } \\
&\geq &C\left( \left( \left\vert \boldsymbol{\xi }\right\vert ^{2}+I\right)
^{\left( 1-\eta \right) /2}\mathbf{1}_{\left\vert \boldsymbol{\xi }%
\right\vert \geq 1}\int_{\left\vert \boldsymbol{\xi }_{\ast }\right\vert
\leq 1/2}e^{-m_{\alpha }\left\vert \boldsymbol{\xi }_{\ast }\right\vert
^{2}/2}\,d\boldsymbol{\xi }_{\ast }\right. \\
&&\left. +\left( 1+I\right) ^{\left( 1-\eta \right) /2}\mathbf{1}%
_{\left\vert \boldsymbol{\xi }\right\vert \leq 1}\int_{\left\vert 
\boldsymbol{\xi }_{\ast }\right\vert \geq 2}e^{-m_{\alpha }\left\vert 
\boldsymbol{\xi }_{\ast }\right\vert ^{2}/2}\,d\boldsymbol{\xi }_{\ast
}\right) \\
&\geq &C\left( \left( \left\vert \boldsymbol{\xi }\right\vert ^{2}+I\right)
^{\left( 1-\eta \right) /2}\mathbf{1}_{\left\vert \boldsymbol{\xi }%
\right\vert \geq 1}+(1+I)^{\left( 1-\eta \right) /2}\mathbf{1}_{\left\vert 
\boldsymbol{\xi }\right\vert \leq 1}\right) \\
&\geq &C\left( 1+\left\vert \boldsymbol{\xi }\right\vert ^{2}+I\right)
^{\left( 1-\eta \right) /2} \\
&\geq &C\left( 1+\left\vert \boldsymbol{\xi }\right\vert +\sqrt{I}\right)
^{1-\eta }\text{.}
\end{eqnarray*}%
Hence, there is a positive constant $\upsilon _{-}>0$, such that 
\begin{equation*}
\nu _{\alpha }\geq \nu _{-}\left( 1+\left\vert \boldsymbol{\xi }\right\vert +%
\sqrt{I}\right) ^{1-\eta }
\end{equation*}%
for all $\alpha \in \left\{ 1,...,s\right\} $ and $\boldsymbol{\xi }\in 
\mathbb{R}^{3}$.

On the other hand,%
\begin{eqnarray*}
\nu _{\alpha } &\leq &C\sum\limits_{\beta =1}^{s}\int_{\mathbb{R}^{3}\times
\left( \mathbb{R}_{+}\right) ^{3}}e^{-m_{\beta }\left\vert \boldsymbol{\xi }%
_{\ast }\right\vert ^{2}/2}e^{-I_{\ast }}E_{\alpha \beta }^{\left( 1-\eta
\right) /2} \\
&&\times \frac{\left( I^{\prime }\right) ^{\delta ^{\left( \alpha \right)
}/2-1}\left( I_{\ast }I_{\ast }^{\prime }\right) ^{\delta ^{\left( \beta
\right) }/2-1}}{\mathcal{E}_{\alpha \beta }^{\delta ^{\left( \alpha \right)
}/2}\left( \mathcal{E}_{\alpha \beta }^{\ast }\right) ^{\delta ^{\left(
\beta \right) }/2}}\mathbf{1}_{I^{\prime }\leq \mathcal{E}_{\alpha \beta }}%
\mathbf{1}_{I_{\ast }^{\prime }\leq \mathcal{E}_{\alpha \beta }^{\ast }}d%
\boldsymbol{\xi }_{\ast }dI_{\ast }dI^{\prime }dI_{\ast }^{\prime } \\
&=&C\sum\limits_{\beta =1}^{s}\int_{\mathbb{R}^{3}\times \mathbb{R}%
_{+}}e^{-m_{\beta }\left\vert \boldsymbol{\xi }_{\ast }\right\vert
^{2}/2}e^{-I_{\ast }}E_{\alpha \beta }^{\left( 1-\eta \right) /2}\int_{0}^{%
\mathcal{E}_{\alpha \beta }}\frac{\left( I^{\prime }\right) ^{\delta
^{\left( \alpha \right) }/2-1}}{\mathcal{E}_{\alpha \beta }^{\delta ^{\left(
\alpha \right) }/2}}dI^{\prime } \\
&&\times \int_{0}^{\mathcal{E}_{\alpha \beta }^{\ast }}\frac{\left( I_{\ast
}^{\prime }\right) ^{\delta ^{\left( \beta \right) }/2-1}}{\left( \mathcal{E}%
_{\alpha \beta }^{\ast }\right) ^{\delta ^{\left( \beta \right) }/2}}%
dI_{\ast }^{\prime }dI_{\ast }d\boldsymbol{\xi }_{\ast } \\
&=&C\sum\limits_{\beta =1}^{s}\int_{\mathbb{R}^{3}\times \mathbb{R}%
_{+}}e^{-m_{\beta }\left\vert \boldsymbol{\xi }_{\ast }\right\vert
^{2}/2}e^{-I_{\ast }}E_{\alpha \beta }^{\left( 1-\eta \right) /2}dI_{\ast }d%
\boldsymbol{\xi }_{\ast }
\end{eqnarray*}%
implying, since, clearly, $E_{\alpha \beta }\leq \left( 1+\mu_{\alpha \beta }\left\vert \mathbf{g}%
\right\vert ^{2}/2+I\right) \left( 1+I_{\ast }\right) $, that 
\begin{eqnarray*}
\nu _{\alpha } &\leq &C\sum\limits_{\beta =1}^{s}\int_{\mathbb{R}^{3}}\left(
1+\frac{\mu_{\alpha \beta }}{2}%
\left\vert \mathbf{g}\right\vert ^{2}+I\right) ^{\left( 1-\eta \right)
/2}e^{-m_{\beta }\left\vert \boldsymbol{\xi }_{\ast }\right\vert ^{2}/2}d%
\boldsymbol{\xi }_{\ast } \\
&&\times \int_{0}^{\infty }\left( 1+I_{\ast }\right) ^{\left( 1-\eta \right)
/2}I_{\ast }^{\delta ^{\left( \beta \right) }/2-1}e^{-I_{\ast }}dI_{\ast } \\
&\leq &C\left( 1+\left\vert \boldsymbol{\xi }\right\vert ^{2}+I\right)
^{\left( 1-\eta \right) /2}\int_{\mathbb{R}^{3}}\left( 1+\left\vert 
\boldsymbol{\xi }_{\ast }\right\vert ^{2}\right) ^{\left( 1-\eta \right)
/2}e^{-m_{\beta }\left\vert \boldsymbol{\xi }_{\ast }\right\vert ^{2}/2}d%
\boldsymbol{\xi }_{\ast } \\
&\leq &C\left( 1+\left\vert \boldsymbol{\xi }\right\vert +\sqrt{I}\right)
^{1-\eta }\int_{0}^{\infty }\left( 1+r^{2}\right) ^{\left( 1-\eta
+\varepsilon \right) /2}r^{2}e^{-m_{\beta }r^{2}/2}dr \\
&=&C\left( 1+\left\vert \boldsymbol{\xi }\right\vert +\sqrt{I}\right)
^{1-\eta }\text{.}
\end{eqnarray*}%
Hence, there is a positive constant $\nu _{+}>0$, such that 
\begin{equation*}
\nu _{\alpha }\leq \nu _{+}\left( 1+\left\vert \boldsymbol{\xi }\right\vert +%
\sqrt{I}\right) ^{1-\eta }
\end{equation*}%
for all $\alpha \in \left\{ 1,...,s\right\} $ and $\boldsymbol{\xi }\in 
\mathbb{R}^{3}$.
\end{proof}

\end{document}